\theoremstyle{definition} \newtheorem{lemma}{Lemma}
\theoremstyle{definition} \newtheorem{definition}[lemma]{Definition}
\theoremstyle{definition} \newtheorem{theorem}[lemma]{Theorem}
\theoremstyle{definition} \newtheorem{Question}{Question}
\theoremstyle{definition} \newtheorem{example}[lemma]{Example}
\theoremstyle{remark} \newtheorem*{remark}{Remark}
\theoremstyle{definition} \newtheorem{corollary}[lemma]{Corollary}
\theoremstyle{definition} 
\theoremstyle{remark} 
\theoremstyle{remark} \newtheorem*{acknowledgements}{Acknowledgements}
\newcommand{\scra}[2]{\mathscr{A}_{#1,#2}}
\newcommand{\df}{\mathrel{\mathop:}=}
\newcommand{\meas}[1]{\lambda \left(#1\right)}
\newcommand{\one}{\log}
\newcommand{\two}{\log \log}
\newcommand{\three}{\log \log \log}
\title{Variations, approximation, and low regularity in one dimension}
\author{Richard Gratwick}
\address{School of Mathematics, James Clerk Maxwell Building,	The King's Buildings,	Peter Guthrie Tait Road,	Edinburgh,	EH9 3FD, UK.}
	\email{R.Gratwick@ed.ac.uk}
\date{\today}
\thanks{The research leading to these results has received funding from the European Research Council under the European Union's Seventh Framework Programme (FP/2007-2013) / ERC Grant Agreement n.~291497.}
\begin{document}

\begin{abstract}
We investigate the properties of minimizers of one-dimensional variational problems when the Lagrangian has no higher smoothness than continuity.  An elementary approximation result is proved, but it is shown that this cannot be in general of the form of a standard Lipschitz ``variation''.  Part of this investigation, but of interest in its own right, is an example of a nowhere locally Lipschitz minimizer which serves as a counter-example to any putative Tonelli partial regularity statement.  Under these low assumptions we find it nonetheless  remains possible to derive necessary conditions for minimizers, in terms of approximate continuity and equality of the one-sided derivatives.
\keywords{Calculus of variations \and Partial regularity \and Lavrentiev phenomenon}
\end{abstract}
\maketitle
\section{Introduction}
\label{sec:intro}
The basic problem of the one-dimensional calculus of variations is the minimization of the functional 
\[ 
\mathscr{L}(u) = \int_{a}^{b} L(t, u(t), u'(t))\, dt
\]
over some class of functions $u \colon [a,b] \to \mathbb{R}^n$ with fixed boundary conditions.  The integrand $L \colon [a,b] \times \mathbb{R}^n \times \mathbb{R}^n\to \mathbb{R}$ is known as the {\it Lagrangian}.  Tonelli~\cite{Tonelli-Fondamenti-2, Tonelli-1934} presented rigorous existence results for minimizers of such a problem, demonstrating the need to work on the function space of absolutely continuous functions, or what is now known also as the Sobolev space $W^{1,1}((a,b) ;\mathbb{R}^n)$.  In particular such functions are only differentiable almost everywhere.   Defining the functional $\mathscr{L}$ on this space, Tonelli developed the {\it direct method} of the calculus of variations to deduce the existence of minimizers when certain conditions are imposed on the Lagrangian.  The key assumptions are the conditions of convexity and superlinearity: i.e.\ that the function $p \mapsto L(t, y, p)$ is convex for each $(t,y)$, and that there exists some $\omega \colon \mathbb{R} \to \mathbb{R}$ satisfying $\omega(\|p\|) / \|p\| \to \infty$ as $\|p\| \to \infty$ such that $L(t, y, p) \geq \omega(\|p\|)$ for all $(t, y, p)$.  Some minimal smoothness of the Lagrangian is also required, for example continuity suffices.  The subject of this paper is what can happen at this level of regularity, i.e.\ when the Lagrangian is assumed only to be continuous. 

The penalty paid for an abstract existence theorem is that one must work in a suitable function space, and therefore can only assert that the minimizer is $W^{1,1}$.  A significant question is then whether it is possible to assert {\it a priori} any higher regularity of minimizers.  Assuming appropriate growth conditions and $C^k$-regularity of the Lagrangian, one may prove $C^k$-regularity of the minimizers (see for example~\cite{Buttazzo-Giaquinta-Hildebrandt}).  For scalar-valued functions $u$, Tonelli~\cite{Tonelli-Fondamenti-2} provided a partial regularity theorem, asserting that $C^{\infty}$-regularity of the Lagrangian and strict convexity in $p$ implies that any minimizer $u$ is $C^{\infty}$ on an open set of full measure.  Clarke and Vinter~\cite{Clarke-Vinter-1985-regularity} gave an analogous statement for vector-valued functions.

The assumption of strict convexity may not be weakened, but several authors have weakened the smoothness assumption on the Lagrangian.  Clarke and Vinter imposed only a local Lipschitz condition in $(y, p)$.  In the scalar case, Sych\"ev~\cite{Sychev-1993} imposed a local H\"older condition, and Cs\"ornyei et al.~\cite{Csornyei-etal-2008} imposed a local Lipschitz condition in $y$, locally uniformly in $(t,p)$. In the vectorial case, Ferriero~\cite{Ferriero-2012, Ferriero-2013} allowed this Lipschitz constant to vary as an integrable function of $t$.  Recalling that no control of the modulus of continuity is required for the existence theorem, Gratwick and Preiss~\cite{Gratwick-Preiss-2011} gave a counter-example of a continuous Lagrangian which admits a minimizer non-differentiable on a dense set.  So we are faced with the possibility of situations where minimizers over $W^{1,1}$ exist, but partial regularity results fail to hold.  Section~\ref{sec:tonelli-cex} presents a new counter-example illustrating this, with a minimizer having upper and lower derivatives $\pm \infty$ at a dense set of points.  

A standard technique to prove necessary conditions of minimizers is to compute the first variation, i.e.\ to consider the limiting behaviour of the function $\gamma \mapsto \mathscr{L} (u + \gamma w)$ as $\gamma \to 0$.  Following this path in the classical situation leads us to the Euler-Lagrange equation and other necessary conditions.  In our low-level regularity situation, assuming only continuity of the Lagrangian, it is not immediately clear how such small perturbations behave.  Ball and Mizel~\cite{Ball-Mizel-1985} gave examples of polynomial Lagrangians for which $\mathscr{L}(u + \gamma w) =  \infty$ for a certain class of smooth functions $w$.  In our case, when we do not have a partial regularity theorem, and must therefore admit the possibility of minimizers which are nowhere locally Lipschitz, it is not even immediately clear that it is possible to approximate the minimum value by any other trajectories at all.

The possibility of a complete failure of approximation is not absurd when one considers the possible presence of the {\it Lavrentiev phenomenon}~\cite{Lavrentiev-1926}, in which situation the energy of Lipschitz functions with the required boundary conditions is bounded away from the minimum value.  That this can occur not only for polynomial integrands~\cite{Mania-1934} but even for strictly convex and superlinear polynomial integrands~\cite{Ball-Mizel-1985} should warn us that we are wise to be wary of what might happen when we consider Lagrangians which satisfy only the bare continuity assumption.  Ball and Mizel~\cite{Ball-Mizel-1985} gave another example of bad behaviour to keep us on guard: the {\it repulsion property}~\cite{Ball-2001}, whereby it can happen that $\mathscr{L} (u_n) \to \infty$ for any sequence of admissible Lipschitz  functions $u_n$ which converge uniformly to the minimizer.

Nevertheless, a general approximation result can be proved, indeed without great difficulty.  This is the content of theorem~\ref{thm:JB} in section~\ref{sec:variations}.  In this section we go on to investigate how fruitful it may be to consider computing the variation as suggested above, and discover that in general it will not get us very far: there exist examples (theorem~\ref{thm:no var}), even superlinear and strictly convex examples (theorem~\ref{thm:no var coercive}), of continuous Lagrangians where the addition of any Lipschitz variation to a minimizer results in an infinite value for the integral.  We also investigate the relationship between approximation in this sense and the Lavrentiev phenomenon.  We find in this section that we can make good use of the counter-example to partial regularity described in section~\ref{sec:tonelli-cex}, using in an essential way the main new feature of this example, viz the fact that the minimizer is nowhere locally Lipschitz.  

The technique used to construct the basic approximation in theorem~\ref{thm:JB} is then put to repeated use in section~\ref{sec:regularity}, where we pursue the question of whether any necessary conditions can be derived of minimizers in our setting.  Having lost any hope of a general partial regularity statement, we are left wondering whether it might be the case that an arbitrary $W^{1,1}$ function can be a minimizer of a variational problem with a continuous Lagrangian.  Under the assumption of {\it strict} convexity,  we are able to show that, although the derivative of a minimizer need not exist at every point, at those points at which the derivative does exist, the derivative is approximately continuous.  As a corollary of this, we can then show that when each one-sided derivative exists at a point, the two derivatives must in fact be equal.  Such statements extend, suitably interpreted, to cases where the derivatives are infinite, and we may be more precise when infinite derivatives are confined to one component.

\subsection{Notation and terminology}
Throughout we fix $[a,b] \subseteq \mathbb{R}$, $n \geq 1$ and the euclidean norm $\| \cdot\|$ on $\mathbb{R}^n$.  We shall consider $[a,b] \times \mathbb{R}^n \times \mathbb{R}^n$ to be equipped with the norm given by the maximum of the norms of the three components. The supremum norm of a real-{} or vector-valued function shall be denoted by $\| \cdot \|_{\infty}$, and the support of a such a function shall be denoted by $\mathrm{spt}$.  For a set $E \subseteq [a,b]$ we denote the Lebesgue measure of the set by $\meas{E}$, and the characteristic function of the set by $\mathds{1}_{E}$.

A {\it Lagrangian} shall be a function $L = L(t, y, p) \colon [a,b] \times \mathbb{R}^n \times \mathbb{R}^n \to \mathbb{R}$. Conditions on the Lagrangians shall be discussed at the relevant points, but in particular we demand that they are continuous, but never impose any stronger smoothness condition or prescribe any modulus of continuity.   For a function $v \in W^{1,1}((a,b); \mathbb{R}^n)$ (if $n=1$ we will usually suppress the notating of the target space), we let 
\[
\mathscr{L}(v) = \int_a^b L(t, v(t), v'(t))\, dt.
\]
Recall that {\it superlinearity} is the condition that for some $\omega \colon \mathbb{R} \to \mathbb{R}$ satisfying $\omega(\|p\|) / \|p\| \to \infty$ as $\|p\| \to \infty$, we have for all $(t, y, p) \in [a,b] \times \mathbb{R}^n\times \mathbb{R}^n$ that $L(t, y, p) \geq \omega(\|p\|)$.  For $A, B \in \mathbb{R}^n$ we let 
\[
\scra{A}{B} \df \{ v \in W^{1,1}((a,b); \mathbb{R}^n) : v(a) = A, v(b) = B\}.
\]
\section{Failure of partial regularity}
\label{sec:tonelli-cex}
In this section we present a counter-example to a putative partial regularity theorem in the manner of Tonelli for continuous Lagrangians.  A first example of this kind was produced by Gratwick and Preiss~\cite{Gratwick-Preiss-2011}, exhibiting a Lipschitz minimizer which was non-differentiable on a dense set.  The following example produces a minimizer with upper and lower Dini derivatives of $\pm \infty$ at a dense set of points, i.e.\ the derivative fails to exist at these points in as dramatic a way possible.  That we have both Lipschitz and non-Lipschitz examples is worth emphasizing.  The Lipschitz example serves to disillusion us should we be inclined to suspect, as can be the case, that \textit{a priori} knowledge of boundedness of the derivative of a minimizer implies some higher regularity.  The non-Lipschitz case is remarkable in that intuitively one does not expect superlinear Lagrangians to have minimizers with infinite derivatives at many points, far less minimizers with difference quotients oscillating arbitrarily largely.  

This example was first presented by Gratwick~\cite[Example~2.35]{Gratwick-2011} as an application of a general construction scheme.
\begin{definition}
	The upper and lower Dini derivatives, $\overline{D}v(t)$ and $\underline{D}v(t)$ respectively, of a function $v \in W^{1,1}(a,b)$ at a point $t \in (a,b)$ are given by
	\[
	\overline{D}v(t) \df \limsup_{s \to t} \frac{ v(s) - v(t)}{s-t},\ \text{and}\ \underline{D}v(t) \df \liminf_{s \to t}\frac{v(s) - v(t)}{s-t}.
	\]
\end{definition}
\begin{theorem}
	\label{thm:thesis}
	There exist $T> 0$, $w \in W^{1,2}(-T, T)$, and a continuous $\phi \colon [-T, T] \times \mathbb{R} \to [0, \infty)$ such that 
	\[
	\mathscr{L} (u) = \int_{-T}^{T} \left( \phi(t, u(t) - w(t)) + (u'(t))^2\right) \, dt
	\]
	defines a functional on $W^{1,1}(-T, T)$ with a continuous Lagrangian such that $w$ is a minimizer of $\mathscr{L}$ over $\mathscr{A}_{w(-T),w(T)}$, but $\overline{D}w(t) = + \infty$ and $\underline{D}w(t) = - \infty$ for a dense set of points $t \in [-T, T]$.
\end{theorem}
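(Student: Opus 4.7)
The plan is to build $w$ and $\phi$ in tandem, indexed by a countable dense subset $\{t_k\}_{k\geq 1} \subset (-T, T)$.  Take pairwise disjoint closed intervals $I_k \ni t_k$ with $|I_k| \to 0$, and set $w = \sum_k a_k w_k$, where each $w_k$ is a smooth function supported in $I_k$ and modelled near $t_k$ on $|t - t_k|^{\alpha} \sin(|t - t_k|^{-\beta})$ for fixed exponents with $\alpha \in (0,1)$, $\beta > 0$, and $\alpha - \beta > 1/2$.  The bound $\alpha < 1$ renders the difference quotient of $w_k$ at $t_k$ of the form $(t - t_k)^{\alpha - 1} \sin((t - t_k)^{-\beta})$, giving $\overline{D}w_k(t_k) = +\infty$ and $\underline{D} w_k(t_k) = -\infty$, while $\alpha - \beta > 1/2$ controls the singularity of $w_k'$ enough to put it in $L^2(I_k)$.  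Since the $w_j$ for $j \neq k$ vanish near $t_k$, $w$ inherits the divergent Dini behaviour of $a_k w_k$ at every $t_k$.  Choosing amplitudes with $\sum a_k^2 \|w_k'\|_{L^2}^2 < \infty$ places $w$ in $W^{1,2}(-T, T)$, and hence in $W^{1,1}(-T,T) \cap C([-T,T])$.

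For the penalty, note that the term $(u')^2$ forces any competitor with finite energy to lie in $W^{1,2}$, so it suffices to consider $u = w + v$ with $v \in W^{1,2}_0(-T,T)$.  The energy difference is
\[
\mathscr{L}(w+v) - \mathscr{L}(w) = \int_{-T}^T v'(t)^2\, dt + 2\int_{-T}^T v'(t)\, w'(t)\, dt + \int_{-T}^T \phi(t, v(t))\, dt,
\]
and we must design $\phi \geq 0$ continuous with $\phi(t, 0) = 0$ so that this is nonnegative.  Write $\phi = \sum_k \phi_k$ with each $\phi_k$ continuous, nonnegative, and supported in $t$ within $I_k$.  Since $w' \equiv 0$ outside $\bigcup I_k$, the disjointness of the $I_k$ reduces the global inequality to the family of local estimates
\[
\int_{I_k} \phi_k(t, v)\, dt + \int_{I_k} v'(t)^2\, dt + 2 a_k \int_{I_k} v'(t)\, w_k'(t)\, dt \geq 0
\]
for every $v \in W^{1,2}(I_k)$, with no constraint imposed on the values of $v$ at $\partial I_k$.

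The local estimate is the analytic heart of the proof.  The quadratic $v'^2 + 2 a_k v' w_k'$ is minimized (over $v'$) by $v' = -a_k w_k'$, corresponding to $v = -a_k w_k + c$ for an arbitrary constant $c$ on $I_k$, producing a deficit of exactly $a_k^2 \|w_k'\|_{L^2(I_k)}^2$.  The penalty $\phi_k$ must therefore be chosen so that $\int_{I_k} \phi_k(t, -a_k w_k(t) + c)\, dt$ dominates this deficit for every $c \in \mathbb{R}$ while remaining small for $v$ near zero.  A natural candidate is a product $\phi_k(t, y) = \rho_k(t)\,\Phi_k(y)$ with $\rho_k$ concentrated near $t_k$ and $\Phi_k$ a continuous convex profile growing at a rate tuned to the known local singularity of $w_k$; the estimate for general $v$, not just the extremal one, follows by a Young-type interpolation between the trivial and extremal cases.

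The main obstacle is a parameter-scheduling problem: the $a_k$, $|I_k|$, and coefficients in $\phi_k$ must be chosen so that (i) the series $\sum_k \phi_k$ converges uniformly on compact subsets of $[-T,T] \times \mathbb{R}$ and defines a continuous function, (ii) the local estimate holds at each $k$, and (iii) introducing a new stage of the construction never damages validity at earlier stages.  This balancing is precisely the content of the general construction scheme of~\citet[Example~2.35]{Gratwick-2011} to which the author refers; with it in hand the proof reduces to verifying the finite-dimensional local estimate at each step.
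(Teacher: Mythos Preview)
Your proposal has two independent gaps, each of which prevents the argument from going through.

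First, pairwise disjoint intervals cannot accommodate a dense set of singularities. If $I_1$ is a nondegenerate closed interval and the $I_k$ are pairwise disjoint with $t_k \in I_k$, then no $t_j$ with $j \neq 1$ can lie in the interior of $I_1$; hence $\{t_k\}$ meets that open interval in at most the single point $t_1$ and is not dense. This is not a technicality to be patched: it is precisely why the paper's construction is inductive and \emph{nested} rather than a direct sum over disjoint supports. In the paper each new singularity at $x_n$ is introduced by modifying the previously built $w_{n-1}$ on a small interval $Y_n$ that typically sits inside the region already carrying earlier singularities, and the elaborate bookkeeping of $T_n$, $R_n$, $K_n$, $\theta_n$, $\sigma_n$ exists to control how these nested modifications interact and how the corresponding weights $\tilde\phi_n$ sum to a continuous $\phi$.

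Second, your model oscillation $|t|^{\alpha}\sin(|t|^{-\beta})$ with $\alpha\in(0,1)$, $\beta>0$ is too singular for any continuous weight to enforce minimality. The only mechanism available for the local inequality is integration by parts, which (after one integrates $2v'w_k'$) requires a pointwise bound of the form $\phi_k(t,y)\geq 2|w_k''(t)|\,|y|$, at least for $|y|$ comparable to $|w_k(t)|$. Along $y=w_k(t)$ this product behaves like $|t|^{2\alpha-2\beta-2}$, and since $\alpha<1\leq\beta+1$ the exponent is negative and the expression blows up at $t_k$; no continuous $\phi_k$ with $\phi_k(t_k,0)=0$ can dominate it. The paper's choice $\tilde w(t)=t\log\log(1/|t|)\,\sin\log\log\log(1/|t|)$ is made exactly so that $|g(t)\tilde w''(t)|\to 0$, which is what allows a continuous weight; power-type oscillations cannot achieve this. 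Your appeal to a ``Young-type interpolation between the trivial and extremal cases'' does not replace this mechanism: the extremal $v=-a_kw_k+c$ is not the only competitor, and without the integration-by-parts structure you have no control over general $v$.
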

The remainder of this section is devoted to a proof of this theorem.
\subsection{Construction of the minimizer}
Let $T \in (0, e^{-e^2}/2)$ be small enough such that for any $t \in [-T, T] \setminus\{0\}$, 
\begin{equation}
(2|t|)^{1/3} \two 1/2|t|\leq \left(\frac{1}{\one 1/2|t|}\right)^{\! 1/3} \two 1/ 2|t|  \leq1/125. \label{T0small}
\end{equation}
Given any sequence of points in $(-T, T)$, we can construct a Lagrangian and minimizer $w$ with the set of non-differentiability points of $w$ containing this sequence.  The construction is essentially inductive, and hinges on the fact that a certain function $\tilde{w}$  is non-differentiable at $0$, with difference quotients oscillating between arbitrarily large positive and negative values, but minimizes a problem with a continuous Lagrangian.  This basic Lagrangian is of the form $(t, y, p)  \mapsto \tilde{\phi}(t, y - \tilde{w}(t)) + p^2$ for a ``weight function'' $\tilde{\phi} \colon [-T, T] \times \mathbb{R} \to [0, \infty)$, i.e.\ such that $\tilde{\phi}(\cdot, 0) = 0$ and $|y|\mapsto \tilde{\phi}(t,y)$ is increasing, so $(t, y) \mapsto \tilde{\phi} (t, y - \tilde{w}(t))$ penalizes functions which stray from $\tilde{w}$.  This summand of the Lagrangian then takes its minimum value along the graph of $\tilde{w}$, and assigns larger values to functions $u$ the further their graph lies from that of $\tilde{w}$. 

\label{DPidea} We sketch the main ideas behind the proof that $\tilde{w}$ minimizes this  ``basic'' problem
\[
\mathscr{A}_{\tilde{w}(-T), \tilde{w}(T)} \ni u \mapsto \int_{-T}^T \left(\tilde{\phi}(t, u(t)-\tilde{w}(t)) + (u'(t))^2\right) \, dt.
\]
So suppose for now that $\tilde{u} \in \mathscr{A}_{\tilde{w}(-T), \tilde{w}(T)}$ is a minimizer for this problem.  

If $\tilde{u}(0)=\tilde{w}(0)$, it suffices to argue separately on $[-T,0]$ and $[0, T]$.  We consider $[0,T]$.   Note for any two functions $\bar{u}, \bar{w} \colon [-T, T] \to \mathbb{R}$, we have that
\begin{equation}
\label{triv}
(\bar{u})^2 - (\bar{w})^2 = (\bar{u} - \bar{w})^2 + 2 (\bar{u} - \bar{w})\bar{w} \geq 2 (\bar{u} - \bar{w})\bar{w}. 
\end{equation}
Assuming $\tilde{w}$ is smooth enough that we can integrate by parts, our key argument is the following: 
\begin{align*}
\int_0^{T} \big(\tilde{\phi}(t, \tilde{u}-\tilde{w}) + (\tilde{u}')^2 \big)- \int_0^{T} (\tilde{w}')^2 
& = \int_0^T \left(  \big( (\tilde{u}')^2 - ( \tilde{w}')^2 \big) + \tilde{\phi}(t, \tilde{u} - \tilde{w})\right) \\
&\geq \int_0^{T} \big(2(\tilde{u}'-\tilde{w}')\tilde{w}' + \tilde{\phi}(t, \tilde{u} -\tilde{w})\big)\\
& =[2(\tilde{u}-\tilde{w})\tilde{w}']_0^{T}  \\ 
&\phantom{=} +{}\int_0^{T}\big( \tilde{\phi}(t, \tilde{u} - \tilde{w}) -2(\tilde{u} - \tilde{w})\tilde{w}'' \big) \\
& \geq \int_0^{T} \big(\tilde{\phi}(t, \tilde{u} - \tilde{w})\big) - 2 |\tilde{u} - \tilde{w}|| \tilde{w}''|\big),
\end{align*}
since the boundary terms vanish by assumption.  Hence choosing $\tilde{\phi} (t, y) \geq 2|\tilde{w}''(t)| |y|$, this final expression is non-negative, implying that $\tilde{w}$ is indeed a minimizer with respect to its own boundary conditions.  However, this inequality for $\tilde{\phi}$ cannot be enforced for all values of $(t,y)$, since $|\tilde{w}''(t)| \to \infty$ as $t \to 0$; this is the whole point of the example.  Since we only need this inequality for values of $y = \tilde{u}-\tilde{w}$, we enforce the inequality for only values of $(t,y)$ which lie in the (slightly expanded) convex hull of the graph of $\tilde{w}$, the shape of which is given by a function $g$.  We choose the oscillations of $\tilde{w}$ as we approach to $0$ to be so slow that $|\tilde{w}''(t) g(t)| \to 0$ as $t \to 0$.  So it is possible to construct a well-defined continuous function $\tilde{\phi}$ so that $\tilde{\phi}(t, y) \geq 2 |w''(t)| |y|$ for $|y| \leq c |g(t)|$, for some constant $c$.  To exploit this definition, we then need to establish that  $|\tilde{u}(t) - \tilde{w}(t) | \leq c| g(t)|$.  It suffices to establish that $|\tilde{u}(t)| \leq c |g(t)|$.  This is a consequence of the assumption that $\tilde{u}$ is a minimizer and that $|g|$ is concave on $(0,T)$.  Since $\tilde{u}$ and $\tilde{w}$ agree at $0$ and $T$, any interval on which $\tilde{u}$ lies outside the convex hull of $\tilde{w}$ must be a proper subinterval of $(0, T)$. By the concavity of $g$ on such  an interval, we may find an affine function which lies strictly between $\tilde{u}$ and $g$, and hence $\tilde{u}$ and $\tilde{w}$.  We then consider the competitor function in the minimization problem defined by replacing $\tilde{u}$ with this affine function.  Since affine functions minimize convex functionals, this strictly decreases the gradient term in the integrand. Since the affine function lies closer to $\tilde{w}$ than $\tilde{u}$, this replacement cannot increase the $\tilde{\phi}(t, \cdot - \tilde{w}(t))$ term.  Hence we get a contradiction: $\tilde{u}$ must lie inside the (expanded) convex hull of $\tilde{w}$.

This argument cannot be performed in the case when $\tilde{u}(0) \neq \tilde{w}(0)$, and there is no \textit{a priori} reason why this might not occur.  In this case, we compare $\tilde{u}$ not with $\tilde{w}$ but with a new function we obtain by replacing $\tilde{w}$ with a linear function $\tilde{l}$ on an interval around $0$.  This forces another requirement on the (slow) speed of the oscillations of $\tilde{w}$, since we incur two errors in making this replacement.  We need to control the difference in the $L^2$-norm of the gradients of $\tilde{w}$ and $\tilde{l}$, and the difference in the gradients at the endpoints of this interval, since these latter terms appear as boundary terms when performing the integration by parts inside and outside the interval.  The oscillations of $\tilde{w}$ are carefully chosen so that these errors are controlled by a continuous function of the discrepancy $|\tilde{u}(0)  - \tilde{w}(0)|$, which by a Lipschitz estimate on $\tilde{u}$ in this situation, is comparable to the length of the interval on which we substitute $\tilde{l}$.

This immediately gives us a one-point example of non-differentiability of a minimizer, which already suffices to provide a counter-example to any Tonelli-like partial regularity result.  Other points of non-differentiability are included by inserting translated copies of $\tilde{w}$ into the original $\tilde{w}$, and  passing to the limit, $w$, say.  The final Lagrangian is of the form $(t,y,p) \mapsto \phi(t, y - w(t)) + p^2$, where $\phi$ is a sum of suitably modified translated and truncated copies $\tilde{\phi}_n$ of $\tilde{\phi}$, each of  which penalizes functions which stray from $w$ in a neighbourhood of $x_n$.  Many of the technicalities of the following construction are related to guaranteeing the existence and appropriate properties of $w$ and $\phi$, and are in some sense secondary to the main points of the proof.  As indicated by the sketch of the argument above, we need to understand the first and second derivatives of $w$, and the shape of its convex hull as seen from each point of singularity $x_n$. This demands a number of conditions in the inductive construction of $w$, ensuring that while the function $w$ oscillates as required around $x_n$, elsewhere the derivatives do not interfere in a significant way with the basic argument performed around $x_n$.

Define $g, \tilde{w} \colon \mathbb{R} \to \mathbb{R}$ by  
\begin{equation*}
g(t) = 
\begin{cases}
t \two 1/|t| & t \neq 0,\\
0 & t = 0;
\end{cases}
\ \textrm{and}\ 
\tilde{w}(t) = 
\begin{cases}
g(t) \sin \three 1/|t| & t \neq 0, 
\\0 & t= 0.
\end{cases}
\end{equation*}
Then
\begin{equation}
\tilde{w} \in C^{\infty}(\mathbb{R} \backslash \{0\}),\label{alpha}
\end{equation}
and in particular $\tilde{w}''$ is bounded away from $0$ and $\tilde{w}'$ satisfies the fundamental theorem of calculus on closed intervals not including $0$.
Note that for $t \neq 0$, 
\begin{equation} 
\tilde{w}'(t) =( \two 1/|t| )(\sin \three 1/|t|) - \left(\frac{\sin \three 1/|t| + \cos \three 1/|t|}{\one 1/|t|}\right),\label{lamda}
\end{equation} which is an even function.  We have chosen $T > 0$ small enough such that $1/\one 1/|t| \leq 1 \leq 2 \leq \two 1/ |t| $ for all $t \in [-2T, 2T]\setminus \{0\}$, and so for such $t$ we have that  
\begin{equation}
|\tilde{w}'(t)| \leq \two 1/|t| + \frac{2}{\one 1/|t|}  \leq 3 \two 1/|t|, \label{|tw'|}
\end{equation}
and that 
\begin{equation*}
|\tilde{w} ''(t)|  \leq \frac{2}{|t|\one1/|t|}\left( \frac{ 1}{(\one 1/|t|)( \two 1/|t|)} + \frac{1}{\one1/|t|} + 1 \right) \leq \frac{6}{|t| \one 1/|t|},
\end{equation*}
and hence it follows that
\begin{align}
|g(t)\tilde{w}''(t)| \leq \frac{6 \two 1/|t|}{\one 1/|t|}&\to 0 \ \textrm{as}\ 0 < |t| \to 0, \label{wn''cont}
\end{align}
which is a key fact discussed above which encapsulates one sense in which the oscillations of $\tilde{w}$ are sufficiently slow.

The following functions give us for each $t \in [-T, T]$ the exact coefficients  we shall eventually need in our weight function $\tilde{\phi}$. We define $\psi^1, \psi^2 \colon \mathbb{R} \to [0, \infty)$ by 
\[
\psi^1 (t) = 
\begin{cases}
\frac{1812}{|t| (\one 1/|t|)^{1/3}} &  t \neq 0,\\
0 & t = 0;
\end{cases}
\
\textrm{and}
\
\psi^2 (t)  = 
\begin{cases}
3 + 2|\tilde{w}''(t)| & t \neq 0, \\ 
0 & t=0; 
\end{cases}
\] 
and $\psi \colon \mathbb{R} \to [0, \infty)$ by $\psi(t) = \psi^1(t) + \psi^2(t)$.  Note that by~\eqref{wn''cont},
\begin{equation}
\label{psi2}
t \mapsto g(t)\psi(t) \ \textrm{defines a continuous function on $\mathbb{R}$ with value $0$ at $0$}.
\end{equation}
We may therefore define a constant $C\in (1, \infty)$ by 
\[
C \df 1 + \sup_{t \in [-T, T]}  5|g(t)| \psi (t).
\]

Let $\{ x_n\}_{n=0}^{\infty}$ be a sequence in $(-T, T)$, with $x_0 = 0$. For each $n \geq 0$ define the translated functions $\tilde{w}_n, g_n, \psi_n^1, \psi_n^2, \psi_n \colon [-T, T]\to \mathbb{R}$ by composing the respective function with the translation $t \mapsto (t- x_n)$, thus $\tilde{w}_n (t) = \tilde{w} ( t- x_n)$, etc. 

For each $n \geq 1$, we define $\sigma_n \in (0,1)$ by 
\[
\sigma_n \df \min_{0 \leq i \leq n-1} |x_i - x_n| / 2.
\]
Observe for future reference that 
\begin{equation}
\label{sigmause}
|t - x_n | \leq \sigma_n\ \textrm{implies that}\ |t - x_i| \geq \sigma_n\ \textrm{for all}\ 0 \leq i \leq n -1,
\end{equation}
for otherwise we should have for some $0 \leq i \leq n-1$ that 
\[
|x_i - x_n| \leq |x_i - t| + |t - x_n| < 2 \sigma_n,
\]
which contradicts the definition of $\sigma_n$.

We want to construct a sequence of absolutely continuous functions $w_n$, where for each $0 \leq i \leq n$, up to the addition of a scalar, $w_n = \tilde{w}_i$ on a neighbourhood of $x_i$, thus $w_n$ is singular at $x_i$.  We first define a decreasing sequence $T_n \in (0,1)$ and hence intervals $Y_n \df [x_n - T_n, x_n + T_n]$.  In the inductive construction of $w_n$ we shall modify $w_{n-1}$ only on $Y_n$.  A requirement that these intervals be small and decreasing in measure is the first step towards guaranteeing that the $w_n$ converge to some limit function.

Define a sequence $K_n \in [1, \infty)$ by setting $K_0 = 1$ and so that for $n\geq 1$, we have 
\begin{align}  
\sum_{i=0}^{n-1} (|\tilde{w}''_i (t)| + |\tilde{w}_i'(t)| +1)&\leq K_n\ \textrm{whenever $|t - x_i | \geq \sigma_n$ for all $0 \leq i \leq n-1$};\label{Kn}\\
\shortintertext{and} 
K_n &\geq 1+ K_{n-1}. \label{Kn2} 
\end{align}

Also for $n \geq 0$ define a sequence $\theta_n \in [1, \infty)$ by setting $\theta_0 = 1$ and for $n\geq 1$ setting
\begin{equation}
\label{thetan}
\theta_n = 10K_n \sigma_n^{-1}.
\end{equation}
The scaling constant $\theta_n$ is an unimportant technicality, which just permits some useful estimates, and is chosen so that the graph of $w_n$ always lies inside the ``multi-graph'' of $w_n (x_i) \pm \theta_i |g_i|$, for all $0 \leq i \leq n$; see~\ref{lipwn} below for a precise statement.  Little conceptual understanding would be lost by regarding the $\theta_n$ as constant, and e.g.\ equal to $1$.

For $n\geq 0$ we define $T_n \in (0,1)$ by setting $T_0 = T$ and for $n \geq 1$ inductively defining $T_n$ such that the following conditions hold:
\begin{enumerate}[label=(T:\arabic*)]
	\item $T_n \leq |x_n \pm T| \sigma_n T_{n-1}/2$; and \label{t1}
	\item $|g_n(t) \psi_n (t)| \leq 2^{-n} / 5\theta_n $ for $t \in Y_n$\label{t4}.
\end{enumerate}
Note that \ref{t4} is possible by~\eqref{psi2}.   Since we will only modify $w_{n-1}$ on $Y_n$ to construct $w_n$, we only need to add more weight to our Lagrangian for $t \in Y_n$.  Recalling that we are always working with translations of the same basic function $\tilde{\phi}$ (which we will define explicitly later), we know that we can choose the intervals $Y_n$ small enough so that summing all the extra ``weights'' we need, we still converge to a continuous function.
That the intervals of modification are small enough in this sense is the reason behind these conditions on $T_n$. We observe  that~\ref{t1} guarantees in particular that
\begin{equation}
T_n \leq 2^{-n}\ \textrm{for all $n\geq0$}.\label{iv}
\end{equation} 
Condition~\ref{t1} also guarantees that the points in $Y_n$ are far from the previous $x_i$, in a certain sense.  That $T_n \leq \sigma_n$ implies that~\eqref{sigmause} holds in particular on $Y_n$, i.e.\ that 
\begin{equation}
\label{Ynfarfromxi}
t \in Y_n \ \textrm{implies that}\ |x_i - t| \geq \sigma_n \ \textrm{for all}\ 0 \leq i \leq n-1. 
\end{equation}
This stops the subintervals we later consider from overlapping.

We emphasize that these values of $T_n$ are constructed independently of the later constructed $w_n$; the inductive construction of these functions will require us to pass further down the sequence of $T_n$ than induction would otherwise allow, as we now see.
For $n \geq 0$, find $m_n \geq n$ such that 
\begin{equation}
\label{mn}
2^{-m_n} \leq \frac{T_{n+1}^2}{32}.
\end{equation}
Choose a small open cover  $G_n \subseteq [-T, T]$ of the points $\{x_i\}_{i=0}^{m_n}$ such that 
\begin{equation}
\label{Fnmeas}
\lambda(G_n) \leq \frac{T_{n+1}^2 }{16C},
\end{equation}
and choose 
$M_n \in (1, \infty)$ such that 
\begin{equation}
\label{Mn}
\sum_{i=0}^{m_n}\left(\max\{\psi_i (t), \psi ( T_i)\}\right) \leq M_n\ \textrm{whenever $ t \in [-T, T]\setminus G_n$}.
\end{equation}

We note also that since $g$ is strictly increasing and $g_i(x_i) = g(0) = 0$, for each $n \geq 0$ there exists $\eta_n \in (0,1) $ such that for all $0 \leq i \leq n-1$,
\begin{equation}
\label{etan}
g_i(t) \geq \eta_n\ \textrm{whenever}\ |x_i - t| \geq \sigma_n.
\end{equation}

Let $R_0 = T$ and for $n\geq 1$ inductively construct decreasing numbers $R_n \in (0,T_n)$, progressively smaller fractions of the corresponding $T_n$, such that:
\begin{enumerate}[label=(R:\arabic*)]
	\item 
	\[
	\int_{-R_n}^{R_n} |\tilde{w}'|^2 \leq \frac{T_n^4}{1024\left( 1+ \|\tilde{w}'\|_{L^2(-T, T)}\right)^2};
	\]
	\label{r1} and
	\item 
	\[
	g(R_n) \leq \frac{2^{-n}R_{n-1}T_n^5 \eta_n}{(344 \cdot 512) \left( 1+ \|\tilde{w}'\|_{L^2(-T, T)}\right)^2  K_n^2  M_{n-1}}.
	\]
	\label{r3}
\end{enumerate}
In~\ref{r3}, for brevity we enforce one single inequality, relating the smallness of $R_n$ to all the other construction constants with which we shall have cause to compare it.  On no one application will we need the precise right-hand side as an upper bound.  Rather at various points we need upper bounds which are most conveniently combined in this one expression.
Now define progressively smaller subintervals $Z_n \df [x_n - R_n, x_n + R_n]$ of $Y_n$.  These intervals are those on which we aim to insert a copy of $\tilde{w}_n$ into $w_{n-1}$.  The $Z_n$ must be a very much smaller subinterval of $Y_n$ to allow the estimates we require to hold; the point of this stage in the construction is that we now let the derivative of $w_n$ oscillate arbitrarily highly on $Z_n$, so we have to make the measure of this set very small to have any control over the convergence of $w_n$ in $W^{1,2}(-T,T)$.

The next lemma gives us the very delicate construction of the sequence of functions $w_n$ by which we shall ultimately define our minimizer $w$.  The basic key facts are that $w_n$ oscillates precisely like $\tilde{w}_n = \tilde{w}( \cdot - x_n)$ in a neighbourhood of $x_n$ no larger than $Z_n$; that $w_n$ equals $w_{n-1}$ off $Y_n$; that on $Y_n \setminus Z_n$ both the first and second derivatives of $w_n$ are controlled in terms of those of $w_{n-1}$, in precise ways which are necessary for the inductive construction and the convergence of the $w_n$; and that the graph of $w_n$ lies within that of $w_n (x_i) \pm 2 \theta_i |g_i|$ for each $0 \leq i \leq n$.
\begin{lemma}
	\label{wnlemma}  
	There exists a sequence of $w_n \in W^{1,2}(-T, T)$ satisfying, for $ n \geq 0$:
	\begin{enumerate}[label=(\thelemma.\arabic*)]
		\item $w_n (t)=  \tilde{w}_n(t) + \rho_n$ when $t \in [x_n - \tau_n, x_n + \tau_n]$, for some $\tau_n \in (0, R_n]$, and some $\rho_n \in \mathbb{R}$;\label{wn=twn}
		\item $w_n'$ exists and is locally Lipschitz on $(-T, T) \setminus \{x_i\}_{i=0}^n$;\label{wn'lip}
		\item $|w_n (t) - w_n (x_i)| \leq (2 - 2^{-n})\theta_i|g_i (t)|$ for all $t \in [-T, T]$ and for all $ 0 \leq i \leq n$;\label{lipwn}
		\item $|w_n'(t)| \leq K_{n+1}$ when $|t - x_{n+1}| \leq \sigma_{n+1}$, in particular on $Y_{n+1}$\label{wn'bd};
		\item $w_n''$ exists almost everywhere and $|w_n''(t)| \leq K_{n+1}$ for almost every $t$ such that $|t - x_{n+1}| \leq \sigma_{n+1}$, in particular on $Y_{n+1}$;
		\label{wn''bd}
	\end{enumerate}
	and for $n \geq 1$:
	\begin{enumerate}[resume,label=(\thelemma.\arabic*)]
		\item $w_n = w_{n-1}$ off $Y_n$;\label{wn=wn-1}
		\item $\|w_n - w_{n-1}\|_{\infty} \leq 5K_n g(R_n)$
		; \label{cvg}
		\item $w_n (x_i) = w_{n-1} (x_i)$ for all $0 \leq i \leq n$; \label{wnfixxi}
				\item $\|w_n' - w_{n-1}'\|_{L^2(-T, T)} \leq \frac{T_n^2}{16\left( 1+ \|\tilde{w}'\|_{L^2(-T, T)}\right)}$; \label{intwn'}
		\item $|w_n'(t)| \leq |w_{n-1}'(t)| + 2^{-n}$ for  almost every $ t\notin  [x_n - \tau_n, x_n + \tau_n]$; and\label{wn'-wn-1'} 
		\item $|w_n''(t)| \leq |w_{n-1}''(t)| + 2^{-n}$ for almost every $ t\notin  [x_n - \tau_n, x_n + \tau_n]$\label{wn''-wn-1''}. 
	\end{enumerate}
\end{lemma}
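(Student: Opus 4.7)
The argument proceeds by induction on $n$. For the base case $n = 0$, since $x_0 = 0$ I set $w_0 := \tilde{w}$, $\tau_0 := T$, $\rho_0 := 0$; each of the conditions \ref{wn=twn}--\ref{wn''bd} at $n = 0$ follows directly from \eqref{alpha}, \eqref{|tw'|}, the pointwise bound $|\tilde{w}(t)| \leq |g(t)|$, and the defining inequality \eqref{Kn} for $K_1$.

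For the inductive step, given $w_{n-1}$, the plan is to modify $w_{n-1}$ only on $Z_n$ by inserting a translated copy of $\tilde{w}_n$ on a much smaller central window $[x_n - \tau_n, x_n + \tau_n]$ and bridging smoothly back to $w_{n-1}$ on each of $[x_n \pm \tau_n, x_n \pm R_n]$. Setting $\rho_n := w_{n-1}(x_n)$ and leaving $\tau_n \in (0, R_n)$ to be fixed below, I define
\[
w_n(t) := \begin{cases} \tilde{w}_n(t) + \rho_n, & t \in [x_n - \tau_n, x_n + \tau_n],\\ H_n^{\pm}(t), & t \in [x_n \pm \tau_n, x_n \pm R_n],\\ w_{n-1}(t), & t \in [-T, T] \setminus Z_n, \end{cases}
\]
where $H_n^{\pm}$ are cubic Hermite polynomials $C^1$-matching $\tilde{w}_n + \rho_n$ at the inner endpoint and $w_{n-1}$ at the outer. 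This construction yields \ref{wn=twn}, \ref{wn=wn-1}, \ref{wn'lip} (via $C^1$-matching at $x_n \pm \tau_n$ and $x_n \pm R_n$, together with smoothness of $\tilde{w}_n$ and $w_{n-1}$ off $\{x_i\}_{i \leq n}$), and \ref{wnfixxi} (using $x_i \notin Y_n$ for $i < n$ via \eqref{Ynfarfromxi}, plus the choice of $\rho_n$).

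The remaining seven conditions are quantitative and all hinge on choosing $\tau_n$ sufficiently small. By \eqref{|tw'|}, $|\tilde{w}_n'(x_n \pm \tau_n)| \leq 3 \log\log(1/\tau_n)$, while the inductive \ref{wn'bd} gives $|w_{n-1}'| \leq K_n$ on $Z_n$. Standard bounds on cubic Hermite interpolants control $\|H_n^{\pm} - w_{n-1}\|_\infty$, $\|(H_n^{\pm})' - w_{n-1}'\|_\infty$, and $\|(H_n^{\pm})'' - w_{n-1}''\|_\infty$ in terms of $g(R_n)$, $R_n$, and $\log\log(1/\tau_n)$; combined with the smallness of the central-window norm $\|\tilde{w}_n'\|_{L^2(x_n - \tau_n, x_n + \tau_n)}$ from \ref{r1} and the bound \ref{r3} on $g(R_n)$, shrinking $\tau_n$ forces each below its prescribed $2^{-n}$ or $L^2$ budget. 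This delivers \ref{cvg}, \ref{intwn'}, \ref{wn'-wn-1'}, and \ref{wn''-wn-1''}. For the multi-graph containment \ref{lipwn}, the case $i = n$ is immediate from $|\tilde{w}_n(t)| \leq |g_n(t)|$ on the central window and the Hermite value bound on the bridges; the case $i < n$ follows by combining the inductive containment at step $n - 1$ with \ref{cvg}, using the slack $2^{-n} \theta_i |g_i|$ permitted at step $n$ and the lower bound \eqref{etan} on $|g_i|$ away from $x_i$. Finally, \ref{wn'bd} and \ref{wn''bd} at level $n + 1$ follow by telescoping through \ref{wn'-wn-1'}, \ref{wn''-wn-1''} and \eqref{Kn}.

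The principal obstacle is the quantitative choice of $\tau_n$: the second derivative of the Hermite bridge is of order $\log\log(1/\tau_n)/R_n$, so matching it against the geometrically shrinking $2^{-n}$ budget in \ref{wn''-wn-1''} demands that $\tau_n$ be forced at a double-exponentially small rate relative to $R_n$. It is precisely to permit this choice while preserving the multi-graph shape at every scale simultaneously that the preceding definitions calibrate $K_n$, $\theta_n$, $M_n$, $\eta_n$, $T_n$, and $R_n$ so delicately.
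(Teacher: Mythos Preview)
Your construction has a genuine gap in the handling of \ref{wn''-wn-1''}, and the final paragraph reveals that you have the monotonicity backwards. The quantity $\log\log(1/\tau_n)$ is \emph{increasing} as $\tau_n \downarrow 0$, so shrinking $\tau_n$ makes the Hermite second-derivative bound $\log\log(1/\tau_n)/R_n$ \emph{worse}, not better. The constraint $\log\log(1/\tau_n)/R_n \leq 2^{-n}$ is in fact unsatisfiable: it forces $\tau_n \geq \exp(-\exp(2^{-n}R_n))$, which for small $R_n$ is of order $e^{-e}$, far larger than the required $\tau_n \leq R_n$.

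The deeper structural problem is that you bridge only on $Z_n$, an interval of length $2R_n$, whereas the paper bridges on all of $Y_n$, of length $2T_n \gg 2R_n$. A cubic Hermite on $[x_n \pm \tau_n, x_n \pm R_n]$ must absorb a value mismatch of order $K_n g(R_n)$ and a derivative mismatch of order at least $|w_{n-1}'(x_n \pm R_n)|$ (which can be as large as $K_n$) over length $\approx R_n$; standard Hermite bounds then give $|H''| \gtrsim K_n g(R_n)/R_n^2 = K_n \log\log(1/R_n)/R_n$, which is enormous. No choice of $\tau_n$ rescues this.

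The paper's device is a \emph{two-stage} bridge. First, $\tau_n$ is not a free smallness parameter but is chosen so that $\tilde w'(\pm\tau_n) = m := w_{n-1}'(x_n)$, exploiting the oscillation of $\tilde w'$; this makes an \emph{affine} segment of slope $m$ on $Z_n \setminus [x_n-\tau_n, x_n+\tau_n]$ join $C^1$ to $\tilde w_n + \rho_n$, with second derivative identically zero there. Second, the residual mismatch at $x_n \pm R_n$ --- in value of order $K_n g(R_n)$ and in derivative only $|\delta_\pm| \leq K_n R_n$ --- is smoothed out over the much longer interval $Y_n \setminus Z_n$ of length $\approx T_n$, via explicit piecewise-linear-derivative cut-offs $\chi_\pm$ with $\|\chi_\pm''\|_\infty \leq 2^{-n}$ by \ref{r3}. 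It is precisely the separation of scales $R_n \ll T_n$ that makes \ref{wn''-wn-1''} achievable; collapsing the bridge to $Z_n$ alone, as you do, destroys this.
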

\begin{proof} 
	For the case $n = 0$, we easily check that setting $w_0 = \tilde{w}_0$ satisfies all the required conditions. Condition~\ref{wn=twn} is trivial for $\tau_0 = T$ and $\rho_0 = 0$; and~\ref{wn'lip} follows from~\eqref{alpha}. \ref{lipwn} is evident from the definition of $\tilde{w}$. Condition~\ref{lipwn} is evident from the definition of $\tilde{w}$, since $w_0 (x_0) = \tilde{w} (0) = 0$ and $\theta_0 \geq 1$. 
		Conditions~\ref{wn'bd} and~\ref{wn''bd} are given precisely by~\eqref{Kn}, since the inequality in~\eqref{Kn} holds for $|t- x_{1}| \leq \sigma_{1}$, as observed in~\eqref{sigmause}.
	
	Suppose for $n \geq 1$ that we have constructed $w_i$ as claimed for all $0 \leq i \leq n-1$.  We demonstrate how to insert a copy of $\tilde{w}_n$ into $w_{n-1}$.  We introduce in this proof a number of variables, e.g.~$m$,  which only appear in this inductive step.  Although they do of course depend on $n$, we do not index them as such, since they are only used while $n$ is fixed.
	
	Condition~\ref{t1} implies that $T_n \leq \sigma_n \leq |x_n - x_i|/2$ for all $0 \leq i \leq n-1$, and so that $x_i \notin Y_n$. Thus $w_{n-1}'$ exists and is Lipschitz on $Y_n$ by inductive hypothesis~\ref{wn'lip}.  Let $m \df w_{n-1}'(x_n)$, so $|m| \leq K_n$ by inductive hypothesis~\ref{wn'bd}.  On some yet smaller subinterval $[x_n - \tau_n, x_n + \tau_n]$ of $Z_n$ we aim to replace $w_{n-1}$ with a copy of $\tilde{w}_n$, connecting this with $w_{n-1}$ off $Y_n$ without increasing too much either the first or second derivatives, hence the choice of $R_n$ as very much smaller than $T_n$.  Moreover we want to preserve a continuous first derivative.  Hence we displace $w_{n-1}$ by a $C^1$ function---dealing with each side of $x_n$ separately---so that on each side of $x_n$ we approach $x_n$ on an affine function of gradient $m$ (a different function on each side, in general), which we then connect up with $\tilde{w}_n$ at a point where $\tilde{w}_n'=m$.   Because we need careful control over the first and second derivatives, it is easiest to construct explicitly the cut-off function we in effect use.
	
	Now, $\limsup_{t \downarrow 0}\tilde{w}'(t) = + \infty$ and $\liminf_{t \downarrow 0}\tilde{w}'(t) = -\infty$, so, recalling that $\tilde{w}'$ is a continuous and even function, we can  find $\tau_n \in (0, R_n]$ such that $ \tilde{w}' (\pm \tau_n) = m$. 
	
	We now construct the cut-off functions $\chi_-$ and $\chi_+$ that we will use on the left and right of $x_n$ respectively.  Additional constants and functions used in the construction are labelled similarly.
	
	Let $\delta_{\pm} \df m-w_{n-1}'(x_n \pm R_n)$, so by inductive hypothesis~\ref{wn''bd} we see that 
	\begin{equation}
	\label{moddelta}
	|\delta_{\pm}| = |w_{n-1}'(x_n) - w_{n-1}'(x_n \pm R_n)| \leq \|w_{n-1}''\|_{L^{\infty}(Y_n)} R_n 
	\leq K_n R_n. 
	\end{equation}  
	Define 
	\[
	c_{\pm}\df w_{n-1}(x_n) - w_{n-1}(x_n \pm R_n) +  \tilde{w} ( \pm \tau_n) \pm m (R_n - \tau_n).
	\]
	The point is that the functions $t \mapsto m(t - (x_n \pm R_n)) + w_{n-1}(x_n \pm R_n)+ c_{\pm}$ are affine functions with gradient $m$ which take the values $w_{n-1}(x_n \pm R_n) + c_{\pm}$ at $t = (x_n \pm R_n)$ and the values $w_{n-1}(x_n) + \tilde{w} ( \pm \tau_n)$ at $t = (x_n \pm \tau_n)$.
	By an application of the mean value theorem, the definition of $\tilde{w}$,  and the inductive hypothesis~\ref{wn'bd}, we have, recalling that $|m| \leq K_n$, that
	\begin{align} 
	|c_{\pm}| 
	& \leq  |w_{n-1}(x_n) - w_{n-1}(x_n \pm R_n)|+  | \tilde{w} ( \pm \tau_n)| + |m| |R_n -\tau_n| \nonumber \\
	& \leq \|w_{n-1}'\|_{L^{\infty}(Y_n)}R_n + g(\tau_n) + K_nR_n \nonumber \\
	& \leq K_n R_n + g(\tau_n) +K_n R_n \nonumber\\
	&\leq 3K_n g(R_n),
	\label{gamma} 
	\end{align}
	using also that $g(R_n) \geq R_n$ and that $K_n \geq 1$. Now let 
	\[
	d_{\pm} \df \frac{4}{T_n}\left(\pm \frac{\delta_\pm}{2}(R_n - T_n/2) - c_{\pm}\right),
	\]
	and define the piecewise affine functions $q_{\pm} \colon [-T, T] \to \mathbb{R}$ by stipulating  
	\[
	q_{\pm}(x_n \pm T_n) = 0 = q_{\pm} (x_n \pm T_n/2), \ q_{\pm}(x_n \pm 3T_n/4) = \pm d_{\pm},
	\]
	and 
	\[
	q_-(t) = 
	\begin{cases} 
	0 & t \leq x_n - T_n, \\
	\delta_- & t \geq x_n - R_n,\\
	\textrm{affine} & \textrm{otherwise};
	\end{cases}
	\ \textrm{and}\
	q_+(t) = 
	\begin{cases}
	\delta_+ & t \leq x_n + R_n,\\
	0 & t \geq x_n + T_n,\\
	\textrm{affine} & \textrm{otherwise}.
	\end{cases}
	\] 
	These $q_{\pm}$ will be the derivatives of the cut-off functions we will use. 
	So by definition of $d_{\pm}$,
	\begin{align}
	\int_{-T}^{x_n - R_n} q_{-}(t) \, dt  
	& = \int_{x_n - T_n}^{x_n - R_n} q_{-}(t) \, dt = \frac{1}{2}\left(-\frac{T_n d_{-}}{2} + (T_n/2 - R_n)\delta_{-}\right) =
	c_{-}, \label{intfl}
	\shortintertext{and}
	\int_{x_n + R_n}^{T}q_{+}(t) \, dt 
	& = \int_{x_n + R_n}^{x_n + T_n}q_+(t) \,dt = \frac{1}{2}\left(\delta_+ (T_n/2 - R_n) + \frac{d_+ T_n}{2}\right) =  -c_+.\label{intfr}
	\end{align}
	We need bounds on the first and second derivatives of the cut-off functions we will use, so we establish appropriate bounds on $q_{\pm}$ and $q_{\pm}'$.  Now, $\|q_{\pm}\|_{\infty} = \max\{ |\delta_{\pm}|, |d_{\pm}|\}$. 
	Note that~\eqref{moddelta}  and~\eqref{gamma} imply, using again that $g(R_n) \geq R_n$, that
	\begin{align}
	|d_{\pm}|  \leq \frac{4}{T_n}\left(\frac{|\delta_{\pm}|}{2}(T_n/2-R_n) + |c_{\pm}| \right)
	& \leq \frac{4}{T_n}\left( \frac{T_n K_n R_n}{4} + 3K_n g(R_n)\right) \nonumber\\
	&= K_nR_n + \frac{12K_n g(R_n)}{T_n}\nonumber\\
	& \leq \frac{13K_n g(R_n)}{T_n}.
	\label{iia}
	\end{align}  
	So, comparing with~\eqref{moddelta}, we have that
	\begin{equation}
	\label{epsilonlb}
	\|q_{\pm}\|_{\infty} \leq \frac{13 K_n g(R_n)}{T_n}. 
	\end{equation}  
	Also, $q_{\pm}'$ exists almost everywhere and satisfies $\|q_{\pm}'\|_{L^{\infty}(-T, T)} = \max\{\frac{4 |d_{\pm}| }{T_n}, \frac{|\delta_{\pm}|}{T_n/2 - R_n}\}$. Note firstly by~\eqref{iia} and~\ref{r3} that 
	\[
	\frac{4|d_{\pm}|}{T_n} \leq \frac{4}{T_n} \left(\frac{13 K_n g(R_n)}{T_n}\right) = \frac{52K_n g(R_n)}{T_n^2} \leq 2^{-n},
	\]
	and secondly that since~\ref{r3} in particular implies that $R_n \leq T_n/4$, using~\eqref{moddelta} and~\ref{r3} we see that 
	\[
	\frac{|\delta_{\pm}|}{(T_n / 2) - R_n} \leq \frac{4 K_n R_n}{T_n} \leq 2^{-n}.
	\]
	Hence
	\begin{equation}
	\label{zetal}
	\|q_{\pm}'\|_{L^{\infty}(-T, T)} 
	\leq 2^{-n}.
	\end{equation}
	
	We can now define our cut-off functions $\chi_{\pm} \colon [-T, T] \to \mathbb{R}$ by 
	\[
	\chi_- (t) = \int_{-T}^t q_-(s) \, ds,\ \textrm{and}\ \chi_+(t) = c_+ - \delta_+((x_n + R_n) - (-T))+ \int_{-T}^t q_+(s)\, ds.
	\]
	Then $\chi_{\pm} \in C^1 (-T, T)$ are such that $\chi_{\pm}' = q_{\pm}$ everywhere, $\chi_{\pm}'' = q_{\pm}'$ almost everywhere, and, by~\eqref{intfl} and~\eqref{intfr}, and the definition of $q_{\pm}$, we have that
	\[
	\chi_{\pm}(x_n \pm T_n) = 0, \ \chi_{\pm} (x_n \pm R_n) = c_{\pm}, \ \chi_{\pm}'(x_n \pm R_n) = q_{\pm}(x_n \pm R_n) = \delta_{\pm}. 
	\]
	
	We can now define $w_n \colon [-T, T]\to \mathbb{R}$ by 
	\[
	w_n(t) = 
	\begin{cases}
	w_{n-1}(t) + \chi_{-} (t) & t \leq x_n - R_n, \\
	m(t - (x_n - R_n)) + w_{n-1}(x_n - R_n)  + c_{-} & x_n - R_n < t <x_n - \tau_n, \\
	w_{n-1}(x_n) +  \tilde{w}_n (t)  & x_n - \tau_n \leq t \leq x_n + \tau_n, \\
	m(t - (x_n + R_n)) + w_{n-1}(x_n + R_n) + c_{+} & x_n + \tau_n < t < x_n + R_n, \\
	w_{n-1}(t) + \chi_+ (t) & x_n + R_n \leq t.
	\end{cases}
	\]
	We see that $w_n$ is continuous by construction.  Condition~\ref{wn=twn} is immediate, with $\tau_n$ as defined, and $\rho_n = w_{n-1}(x_n)$.  We note that since, by the definitions of $q_{\pm}$,  $\chi_{-}(t) = 0$ for $t \leq x_n - T_n$, and $\chi_{+}(t) = 0$ for $t \geq x_n + T_n$, we have that $w_n = w_{n -1}$ off $Y_n$, as required for~\ref{wn=wn-1}.  To check~\ref{wnfixxi}, we let $ 0 \leq i \leq n$.  If $ i \leq n-1$, then $x_i \notin Y_n$ since $T_n \leq \sigma_n$, so $w_n (x_i) = w_{n-1}(x_i) $ by~\ref{wn=wn-1}, which we have just checked for $n$.  We see directly from the construction that $w_n (x_n) = w_{n-1}(x_n)$ since $\tilde{w}_n (x_n) = 0$, as required for the full result.
	
	We see that $w_n'$ exists off $\{x_i\}_{i=0}^n$ by inductive hypothesis~\ref{wn'lip} and by construction: the values of $\delta_{\pm}$ and $\tau_n$ were chosen precisely so that the derivatives agree across the joins in the definition of $w_n$.  The derivative is given by
	\[
	w_n'(t) = 
	\begin{cases} 
	w_{n-1}'(t) + q_{-} (t) & t \in [ -T, x_n - R_n] \setminus \{ x_i \}_{i =0}^{n-1}, \\
	m & x_n -R_n < t < x_n - \tau_n, \\
	\tilde{w}_n'(t)& x_n - \tau_n \leq t < x_n, \ x_n < t \leq x_n + \tau_n, \\
	m & x_n + \tau_n < t < x_n + R_n, \\
	w_{n-1}'(t) + q_{+} (t) & t \in [x_n + R_n , T]  \setminus \{ x_i \}_{i =0}^{n-1}. 
	\end{cases}
	\] 
	This is locally Lipschitz on $(-T, T) \setminus \bigcup_{i=0}^n\{x_i\}$ by inductive hypothesis~\ref{wn'lip} on $w_{n-1}'$ and since $q_{\pm}$ are Lipschitz, and by~\eqref{alpha}, as required for condition~\ref{wn'lip}.
	
	We have constructed $w_n$ so that except on $[x_n - \tau_n, x_n + \tau_n]$, the derivative is comparable with that of $w_{n-1}$, wherever both exist, which is everywhere except for the points $\{x_i\}_{i=0}^{n}$.  For $t \notin Z_n \cup \{x_i\}_{i = 0}^{n-1}$, we see by~\eqref{epsilonlb} that 
	\[
	|w_n '(t) - w_{n-1}'(t)| \leq | q_{\pm}(t)| \leq \frac{13K_n g(R_n)}{T_n}.
	\]
	For $t \in Z_n \setminus [x_n - \tau_n, x_n + \tau_n]$, we note that since $R_n \leq T_n \leq \sigma_n$, we have that $|x_n - t| \leq \sigma_n$, and so we may use inductive hypothesis~\ref{wn''bd} to see that
	\begin{align}
	|w'_n (t) - w_{n-1}'(t)| = |m - w'_{n-1}(t)|  = |w'_{n-1}(x_n) - w'_{n-1}(t)| 
	& \leq \|w_{n-1}''\|_{L^{\infty}(Y_n)}|t - x_n| \nonumber\\
	& \leq K_n R_n. \label{Znwn''}
	\end{align}
	Since~\ref{r3} implies that $K_nR_n \leq \frac{13K_n g(R_n)}{T_n} \leq 2^{-n}$, in particular we gain condition~\ref{wn'-wn-1'}.
	
	The pointwise comparison we have just established between $w_n'$ and $w_{n-1}'$ fails in general on $[x_n - \tau_n, x_n + \tau_n]$.  On this set, in fact, the whole point of the construction is that $w_n'$ now equals $\tilde{w}_n' = \tilde{w}'( \cdot - x_n)$, which oscillates between arbitrarily large positive and negative values.  On the other hand, $w_{n-1}'$, since it is locally Lipschitz away from the points $\{ x_i \}_{i = 1}^{n-1}$, may be regarded as basically constant on $[x_n - \tau_n, x_n + \tau_n]$, at least compared to the behaviour of $w_n'$.  We chose $R_n$ to be so small that despite this (large!) discrepancy on $[x_n - \tau_n, x_n + \tau_n] \subseteq Z_n$, the two derivatives are close in $L^2(-T, T)$, as stated in~\ref{intwn'}, which we now check. 
	First note that, using the definition of $w_n$ and~\ref{wn=wn-1} (which we have checked for $n$),
	\begin{align*}
		\int_{-T}^{T} |w_n' (t) - w_{n-1}'(t)|^2 \, dt
	& = \int_{Y_n} |w_n ' (t)- w_{n-1}'(t)|^2  \, dt \\
	& = \int_{x_n - T_n}^{x_n - R_n} |q_{-} (t) |^2  \, dt + \int_{Z_n \setminus [x_n - \tau_n, x_n + \tau_n]} |m - w_{n-1}'(t)|^2  \, dt \\
	& \phantom{=} {}+ \int_{x_n - \tau_n}^{x_n + \tau_n} |\tilde{w}_n' (t) - w_{n-1}'(t)|^2  \, dt + \int_{x_n + R_n}^{x_n + T_n} |q_{+} (t)|^2  \, dt.
	\end{align*}
	Now, by~\eqref{epsilonlb},
	\begin{align*}
	\lefteqn{
			\int_{x_n - T_n}^{x_n - R_n} |q_{-} (t) |^2  \, dt  + \int_{x_n + R_n}^{x_n + T_n} |q_{+}(t)|^2 \, dt 
			}\\
	& \leq \int_{x_n - T_n}^{x_n - R_n} \frac{(13K_n g(R_n))^2}{T_n^2}\, dt + \int_{x_n + R_n}^{x_n + T_n}  \frac{(13K_n g(R_n))^2}{T_n^2}\, dt \\
	& \leq \frac{2T_n 169g(R_n)^2 K_n^2}{T_n^2}\\
	& \leq \frac{338g(R_n) K_n^2}{T_n},
	\end{align*}
	using also that $g(R_n) \leq 1$. 
	Further, by~\eqref{Znwn''}, we have that
	\begin{align*}
	\int_{Z_n \setminus [x_n- \tau_n, x_n + \tau_n]}  |m - w_{n-1}'(t)|^2  \, dt  \leq \int_{Z_n \setminus [x_n- \tau_n, x_n + \tau_n]} (K_n R_n)^2 
	& \leq  2 R_n (K_n R_n)^2 \\
	&\leq 2 R_n K_n^2.
	\end{align*}
	Finally, by inductive hypothesis~\ref{wn'bd}, we have that 
	\begin{align*}
	\int_{x_n - \tau_n}^{x_n + \tau_n} |\tilde{w}_n' (t) - w_{n-1}'(t)|^2  \, dt 
	&\leq \int_{x_n - \tau_n}^{x_n + \tau_n} 2\left(|\tilde{w}_n' (t)|^2 + |w_{n-1}'(t)|^2\right)  \, dt \\
	& \leq 2 \! \left(\int_{- \tau_n}^{ \tau_n} |\tilde{w}' (t)|^2 \, dt + \int_{-\tau_n}^{\tau_n} K_n^2\, dt\right)\\
	& \leq 2 \! \int_{-R_n}^{R_n} |\tilde{w}' (t)|^2  \, dt  + 4 K_n^2 \tau_n \\
	& \leq 2 \! \int_{-R_n}^{R_n} |\tilde{w}'(t)|^2  \, dt + 4 K_n^2 R_n.
	\end{align*}
	Combining these estimates, using~\ref{r1},~\ref{r3}, and that $g(R_n) \geq R_n$, we see that
	\begin{align*}
	\lefteqn{\int_{-T}^{T} |w_n' (t) - w_{n-1}'(t)|^2 \, dt}\\ 
	& \leq \frac{338g(R_n) K_n^2}{T_n} + 2 R_n K_n^2 + 2 \!\int_{-R_n}^{R_n} |\tilde{w}'(t)|^2  \, dt + 4 K_n^2 R_n\\
	& \leq \frac{344 g(R_n)K_n^2}{T_n} + \frac{T_n^4}{512\left( 1 + \|\tilde{w}'\|_{L^2(-T, T)}\right)^2}\\
	& \leq \frac{T_n^4}{512\left( 1 + \|\tilde{w}'\|_{L^2(-T, T)}\right)^2} + \frac{T_n^4}{512\left( 1 + \|\tilde{w}'\|_{L^2(-T, T)}\right)^2}\\
	& = \frac{T_n^4}{256\left( 1 + \|\tilde{w}'\|_{L^2(-T, T)}\right)^2}.
	\end{align*} 
	Taking square roots gives condition~\ref{intwn'}.
	Now, $w_n''$ exists almost everywhere by inductive hypothesis~\ref{wn''bd} and by construction, and, where it does, is given by
	\[
	w_n''(t) = 
	\begin{cases}
	w_{n-1}''(t) + q_{-}'(t) & t < x_n - R_n, \\
	0 & x_n - R_n < t < x_n - \tau_n, \\
	\tilde{w}_n''(t) & x_n - \tau_n < t < x_n, \ x_n < t <x_n + \tau_n, \\
	0 & x_n + \tau_n < t < x_n + R_n,\\
	w_{n-1}''(t) + q_{+}'(t) & x_n + R_n < t.
	\end{cases}
	\]
	Thus by~\eqref{zetal}, for almost every $ t \in (-T, T) \setminus Z_n$, we have that
	\[
	|w_n''(t)| \leq |w_{n-1}''(t)| + |q_{\pm}'(t)| \leq |w_{n-1}''(t)| + 2^{-n}.
	\]
	Condition~\ref{wn''-wn-1''} follows, since $w_n'' = 0$ on $Z_n \setminus [x_n - \tau_n, x_n + \tau_n]$.
	
	We now check~\ref{wn'bd} and~\ref{wn''bd}.  Suppose $|t - x_{n+1}| \leq \sigma_{n+1}$.  Then by~\eqref{sigmause} the inequality in~\eqref{Kn} holds, in particular
	\[
	\sum_{i=0}^n (|\tilde{w}_i''(t)| + |\tilde{w}_i'(t)|)\leq K_{n+1},
	\]
	for all $0 \leq i \leq n$, precisely by choice of $K_{n+1}$.
	
	Let $0 \leq k \leq n$ be such that $t \in Y_k \setminus \bigcup_{i=k+1}^n Y_i$.  Then by inductive hypothesis~\ref{wn=wn-1} for $k+1, \dots, n$ (note we have checked this for $n$),
	we have that $w_n = w_k$ on a neighbourhood of $t$, so $w_n'(t) = w_k'(t)$ and $w_n''(t) = w_k ''(t)$ where both sides exist, i.e.\ almost everywhere.  We have to distinguish the cases of when $t$ lies in $[x_k - \tau_k, x_k + \tau_k]$ and when it does not.
	
	If $t \notin [x_k - \tau_k , x_k + \tau_k]$, then by inductive hypotheses~\ref{wn'-wn-1'} (we have checked this for $k = n$) and~\ref{wn'bd} (since $t \in Y_k$), and by~\eqref{Kn2}, we have that
	\[
	|w_n'(t) | = |w_k'(t)| \leq |w_{k-1}'(t)| + 2^{-k} \leq K_k + 1 \leq K_{n+1},
	\]
	as required for~\ref{wn'bd}. Similarly by inductive hypotheses~\ref{wn''-wn-1''} (we have checked this for $k = n$) and~\ref{wn''bd}, and by~\eqref{Kn2}, we have almost everywhere that
	\[
	|w_n''(t)| = |w_k''(t) | \leq |w_{k-1}''(t)| + 2^{-k} \leq K_{k} + 1  \leq K_{n+1},
	\]
	as required for~\ref{wn''bd}.
	
	If $ t \in [x_k - \tau_k , x_k + \tau_k]$, observe first that $t \neq x_k$ since $|t - x_{n+1}| \leq \sigma_{n+1}$.  So by inductive hypothesis~\ref{wn=twn} (we have checked this for $k = n$),  we have, using~\eqref{Kn}, that 
	\begin{gather*}
	|w_n'(t)| = |w_k'(t)| = | \tilde{w}_k '(t)|  \leq \sum_{i=0}^k |\tilde{w}_i '(t)| \leq \sum_{i=0}^n |\tilde{w}_i'(t)| \leq  K_{n+1},
	\shortintertext{and, if $t \in (x_k - \tau_k, x_k + \tau_k)$ (we claim nothing about $w_n''$ at the endpoints $x_k \pm \tau_k$), almost everywhere we have that}
	|w_n''(t)| = |w_k''(t)| = | \tilde{w}_k ''(t)|  \leq \sum_{i=0}^k |\tilde{w}_i ''(t)| \leq \sum_{i=0}^n |\tilde{w}_i''(t)| \leq  K_{n+1},
	\end{gather*}
	as required.  So~\ref{wn'bd} and~\ref{wn''bd} hold in all cases.
	
	We now move towards checking~\ref{cvg}.  Now observe that~\eqref{iia} and~\eqref{moddelta} imply that on $[-T, x_n - R_n]$, we have, by definition, that 
	\begin{align*}
	|\chi_{-}(t)| \leq \int_{-T}^{x_n - R_n} | q_{-}(s)|\, ds 
	& \leq  \frac{1}{2} \left(\frac{T_n}{2} |d_{-}| + (T_n/2-R_n)|\delta_{-}|\right)\\
	& \leq \frac{T_n}{4}\frac{13 K_n g(R_n)}{T_n} + \frac{T_n K_nR_n}{2} \\
	& \leq \frac{13 K_n g(R_n)}{4} + \frac{K_n g(R_n)}{2}\\
	& \leq 4K_n g(R_n).
	\end{align*}
	The same estimate holds for $\chi_{+}$ on $[ x_n + R_n,T]$: we note first by~\eqref{intfr} that
	\begin{align*} 
	\chi_{+}(t) & =  c_{+} - \delta_{+}((x_n + R_n) + T) + \int_{-T}^t q_{+} (s)\, ds\\
	& =  c_{+} + \int_{x_n + R_n}^t q_{+}(s) \, ds \\
	& =  - \int_{x_n + R_n}^{T} q_{+}(s)\, ds + \int_{x_n + R_n}^t q_{+} (s)\, ds\\
	& = - \int_t^{ T} q_{+}(s)\, ds,
	\end{align*}
	and hence, since $|\chi_{+}| \leq \int_{x_n + R_n}^{T}|q_{+}|$ on $[x_n + R_n, T]$,  we can estimate $|\chi_{+}|$ as we estimated $|\chi_{-}|$ on $[-T, x_n - R_n]$ above. So, we have for all $t \in [-T, T] \setminus Z_n$ that
	\[
	|w_n (t) - w_{n-1}(t)| = |\chi_{\pm}(t)| \leq 4K_n g(R_n).
	\]
	By inductive hypothesis~\ref{wn'bd} and~\eqref{gamma}, we have for $t \in Z_n \setminus [x_n - \tau_n, x_n + \tau_n]$ that  
	\begin{align*}
	|w_n (t) - w_{n-1}(t)| 
	&\leq |m(t - (x_n \pm R_n))| + |w_{n-1}(x_n \pm R_n) - w_{n-1}(t)| + |c_{\pm}| \\
	& \leq |m| |(t - ( x_n \pm R_n))| + \|w_{n-1}'\|_{L^{\infty}(Y_n)}| t- ( x_n \pm R_n)| + | c_{\pm}|\\
	&\leq  K_n R_n + K_n R_n + 3K_n g(R_n) \\
	&\leq 5K_n g(R_n).
	\end{align*}
	Finally for $x_n - \tau_n \leq t \leq x_n + \tau_n $, by inductive hypothesis~\ref{wn'bd}, the definition of $\tilde{w}$, and the monotonicity of $g$, we have that
	\begin{align*}
	|w_n (t) - w_{n-1}(t)|  = |(\tilde{w_n}(t)+ w_{n-1}(x_n)) - w_{n-1}(t)| 
	& \leq | \tilde{w}_n (t)| + |w_{n-1}(x_n) - w_{n-1}(t)| \\
	& \leq g(\tau_n) + \|w_{n-1}'\|_{L^{\infty}(Y_n)}|x_n - t| \\
	& \leq g(R_n) + K_n R_n\\
	& \leq 2K_n g(R_n).
	\end{align*}
	Hence we have that $\|w_n - w_{n-1}\|_{\infty} \leq 5 K_n g(R_n)$, as required for~\ref{cvg}.
	
	We can now check~\ref{lipwn}.  First consider $0 \leq i \leq n-1$.  The result is immediate by inductive hypothesis if $t \notin Y_n$, by~\ref{wnfixxi} and~\ref{wn=wn-1}, both of which we have checked for $n$.    So suppose $ t \in Y_n $.  Then $|g_i (t)| \geq \eta_n$ by~\eqref{Ynfarfromxi} and~\eqref{etan}.  Condition~\ref{cvg}, which we have checked for $n$, and~\ref{r3} imply that 
	\[
	|w_n (t) - w_{n-1}(t) | \leq 5 K_n g(R_n)  \leq 2^{-n} \eta_n  \leq 2^{-n}|g_i(t)| \leq 2^{-n} \theta_i |g_i(t)|,
	\]
	since $\theta_i \geq 1$.  Condition~\ref{wnfixxi} and inductive hypothesis~\ref{lipwn} imply that
	\[
	|w_{n-1} (t) - w_n (x_i)|  = |w_{n-1}(t) - w_{n-1}(x_i)|  \leq \left( 2- 2^{-(n-1)}\right) \theta_i |g_i (t)|.
	\]
	So
	\begin{align*} 
	|w_n (t) - w_n (x_i)| 
	& \leq |w_n (t) - w_{n-1}(t)| + |w_{n-1}(t) - w_{n-1}(x_i)| \\
	& \leq 2^{-n} \theta_i |g_i (t)| + \left( 2- 2^{-(n-1)}\right)\theta_i |g_i(t)| \\
	& = (2- 2^{-n})\theta_i |g_i (t)|.
	\end{align*}
	It just remains to check~\ref{lipwn} in the case $i =n$. We first show that for all $t \in [-T, T]$  we have chosen $\theta_n$ such that 
	\begin{equation}
	\label{wn-1 under gn}
	|w_{n-1}(t) - w_{n-1}(x_n)| \leq \theta_n |g_n (t)| / 2.
	\end{equation}
	This is the motivating factor behind the choice of $\theta_n$: blowing up the graph of $w_{n-1}(x_n) \pm |g_n| = w_n (x_n) \pm |g_n|$ so that it encloses that of $w_{n-1}$. Now, for $ |t- x_n| \leq \sigma_n$, we have by inductive hypothesis~\ref{wn'bd} and~\eqref{thetan}, since $\two 1/|t- x_n| \geq 2$, that 
	\begin{align*}
	|w_{n-1}(t) - w_{n-1}(x_n)| \leq \|w_{n-1}'\|_{L^{\infty}(x_n - \sigma_n, x_n + \sigma_n)} | t- x_n| \leq K_n |t-x_n| 
	& \leq \theta_n |t- x_n |\\
	& \leq \theta_n |g_n (t)|/2.
	\end{align*}
	If $|t - x_n | \geq \sigma_n$, then by inductive hypothesis~\ref{cvg},~\ref{r3}, and~\eqref{thetan}, and since $T$ was chosen small enough such that $g(T) \leq 1 \leq 2 \leq \two 1/2T$, we have that
	\allowdisplaybreaks{
		\begin{align*} 
	|w_{n-1}(t) - w_{n-1}(x_n)| 
	& \leq |w_{n-1}(t) - w_0 (t)| + |w_0 (t) - w_0 (x_n)| + |w_0 (x_n) - w_{n-1}(x_n)|\\
	& \leq  2\|w_0\|_{\infty} + 2 \|w_{n-1} - w_0\|_{\infty}  \\
	& \leq 2 \left(g(T) +  \sum_{i= 1}^{n-1}\|w_i - w_{i-1}\|_{\infty} \right)\\
	& \leq 2 \left( g(T) + \sum_{i=1}^{n-1} 5K_i g(R_i) \right)\\
	&\leq 2 \left(g(T)+ \sum_{i=1}^{n-1}2^{-i} \right) \\
	& \leq 4 \\
	& \leq \theta_n \sigma_n  \\
	& \leq \frac{\theta_n \sigma_n (\two 1/2T )}{2} \\
	& \leq \frac{\theta_n |t - x_n| (\two 1/ 2 T)}{2}\\
	& \leq \frac{\theta_n | (t-x_n) \two 1/|t-x_n| |}{2} \\
	& =\theta_n |g_n (t)|/2,
	\end{align*}
}
	as claimed. 
	
	To check~\ref{lipwn} in this final case, suppose first that $t \in [x_n - \tau_n, x_n + \tau_n]$.  Then by~\ref{wn=twn},  and the definition of $\tilde{w}$ we have, since $\theta_n \geq 1$, that 
	\[
	|w_n (t) - w_n (x_n)| = |\tilde{w}_n (t) - \tilde{w}_n (x_n)| \leq  |g_n (t)| \leq ( 2- 2^{-n})\theta_n  |g_n (t)|.
	\]  
	To deal with the case $x_n - R_n \leq t< x_n - \tau_n$, we note first that the  condition is satisfied at the endpoints of the interval.  That it holds for $t = x_n - \tau_n$ has just been established. Using the definition of $w_n$,~\ref{wnfixxi}, inductive hypothesis~\ref{wn'bd},~\eqref{gamma}, and~\eqref{thetan} we see that
	\begin{align*} 
	|w_n (x_n - R_n) - w_n (x_n)| 
	&= |w_{n-1}(x_n - R_n) + c_- - w_{n-1} (x_n )| \\
	& \leq \|w_{n-1}'\|_{L^{\infty}(Y_n)}R_n + |c_{-}| \\
	&\leq K_n R_n + 3 K_n g(R_n) \\
	&\leq 4K_n g(R_n) \\
	&\leq \theta_n g(R_n)\\
	& \leq ( 2 - 2^{-n}) \theta_n |g_n ( x_n - R_n)|.
	\end{align*}
	So the condition holds at $x_n - R_n$ and $x_n - \tau_n$.  Since $w_n$ is defined to be affine between these points, and $|g_n|$ is concave on $[-T, x_n]$, the result holds for all $t \in [x_n -R_n, x_n - \tau_n]$.  Similarly the result holds for all $t \in [x_n + \tau_n, x_n + R_n]$.  Finally we have to consider $t \notin [x_n - R_n, x_n + R_n]$.  In this case we  have by monotonicity of $g$ that $g_n (t) \geq g (R_n)$, and so we see using~\ref{wnfixxi},~\ref{cvg},~\eqref{wn-1 under gn}, and~\eqref{thetan} that
	\begin{align*} 
	|w_n (t) - w_n (x_n)| 
	& \leq |w_n (t) - w_{n-1}(t)| + |w_{n-1}(t) - w_{n} (x_n )| \\
	& \leq \|w_n - w_{n-1}\|_{\infty} + |w_{n-1}(t) - w_{n-1}(x_n)|\\
	& \leq 5 K_n g(R_n) + \theta_n |g_n (t)|/2 \\
	& \leq 5K_n |g_n (t)| +  \theta_n|g_n (t)|/2 \\
	& \leq \theta_n |g_n(t)| / 2 + \theta_n |g_n (t)| / 2 \\
	& \leq (2 -2^{-n})\theta_n  |g_n (t)|.
	\end{align*}  
	Thus~\ref{lipwn} holds for all $t \in [-T, T]$ as claimed.
\end{proof}
We now show that this sequence converges to some $w \in W^{1,2}(-T, T)$.  This $w$ will be our minimizer.
\begin{lemma}
	\label{wncvg}  
	The sequence $\{w_n\}_{n=0}^{\infty}$ converges uniformly to some function $w \in W^{1,2}(-T, T)$ such that for all $n \geq 0$,
	\begin{enumerate}[label=(\thelemma.\arabic*)]
		\item $w(x_i) = w_n (x_i)$ for all $0 \leq i \leq n+1$; \label{wfixxi}
		\item $ \|w- w_n\|_{\infty} \leq 10 K_{n+1} g(R_{n+1})$;\label{w-wn}
		\item $\|w' - w_n'\|_{L^2(-T, T)} \leq \frac{T_{n+1}^2}{8\left(1 + \| \tilde{w}'\|_{L^2(-T, T)}\right)}$; \label{wn'tow'in L2}and
		\item $|w(t) - w(x_n)| \leq 2 \theta_n|g_n (t)|$ for all $ t \in [-T, T]$. \label{lipw}
	\end{enumerate}
\end{lemma}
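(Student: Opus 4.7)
My plan is to extract uniform convergence and $L^2$-convergence of $w_n$ and $w_n'$ directly from the summability of the estimates \ref{cvg} and \ref{intwn'}, and then obtain the four listed properties by passing to the limit in corresponding statements about $w_n$.

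First, I would note that condition \ref{r3} forces $5K_n g(R_n) \leq C_1 \cdot 2^{-n}$ for some constant $C_1$ depending only on the ambient data (all the other factors on the right of \ref{r3} are bounded), and moreover that $K_{k+1} g(R_{k+1}) \leq \tfrac{1}{2} K_k g(R_k)$, so that the tail $\sum_{k \geq n+1} 5 K_k g(R_k)$ is dominated by a geometric series and in particular bounded by $10 K_{n+1} g(R_{n+1})$. Likewise, condition \ref{t1} implies $T_{n+1} \leq T_n/2$, so $\sum_{k \geq n+1} T_k^2 \leq 2 T_{n+1}^2$. Hence by \ref{cvg} the sequence $\{w_n\}$ is Cauchy in $C([-T,T])$, and by \ref{intwn'} the sequence $\{w_n'\}$ is Cauchy in $L^2(-T,T)$. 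Let $w$ denote the uniform limit, which is continuous, and let $v \in L^2(-T,T)$ denote the $L^2$-limit of $w_n'$. Since uniform convergence implies convergence in $\mathscr{D}'$, the weak derivatives converge, so $v = w'$ in the distributional sense, and therefore $w \in W^{1,2}(-T,T)$ with $w' = v$.

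For \ref{wfixxi}, fix $0 \leq i \leq n+1$; iterating \ref{wnfixxi} gives $w_k(x_i) = w_n(x_i)$ for all $k \geq n$ (since at each step $k > n$ we have $i \leq n+1 \leq k$), and uniform convergence gives $w(x_i) = w_n(x_i)$. For \ref{w-wn}, telescope and apply the tail estimate:
\[
\|w - w_n\|_\infty \leq \sum_{k=n+1}^{\infty} \|w_k - w_{k-1}\|_\infty \leq \sum_{k=n+1}^{\infty} 5 K_k g(R_k) \leq 10 K_{n+1} g(R_{n+1}),
\]
using the geometric-decay remark above. For \ref{wn'tow'in L2}, similarly telescope and use $\sum_{k \geq n+1} T_k^2 \leq 2 T_{n+1}^2$:
\[
\|w' - w_n'\|_{L^2(-T,T)} \leq \sum_{k=n+1}^{\infty} \|w_k' - w_{k-1}'\|_{L^2(-T,T)} \leq \sum_{k=n+1}^{\infty} \frac{T_k^2}{16(1+\|\tilde w'\|_{L^2(-T,T)})} \leq \frac{T_{n+1}^2}{8(1+\|\tilde w'\|_{L^2(-T,T)})}.
\]

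Finally, \ref{lipw} follows by taking $k \to \infty$ in \ref{lipwn} with $i = n$: for each $k \geq n$ we have $|w_k(t) - w_k(x_n)| \leq (2 - 2^{-k}) \theta_n |g_n(t)|$, and uniform convergence together with \ref{wfixxi} (giving $w_k(x_n) = w(x_n)$ for $k \geq n$) yields the claim. The only real obstacle is checking the geometric-decay claim about $K_k g(R_k)$, which is where the precise form of \ref{r3} is used; everything else is a standard triangle-inequality-plus-telescoping argument combined with the identification of the weak derivative of the uniform limit.
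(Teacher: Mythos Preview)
Your proposal is correct and follows essentially the same approach as the paper: telescope $\|w_m - w_n\|_\infty$ using \ref{cvg} together with the geometric decay $K_k g(R_k) \leq \tfrac{1}{2} g(R_{k-1}) \leq \tfrac{1}{2} K_{k-1} g(R_{k-1})$ extracted from \ref{r3}, telescope $\|w_m' - w_n'\|_{L^2}$ using \ref{intwn'} together with $T_k \leq T_{k-1}/2$ from \ref{t1}, and then pass to the limit in \ref{wnfixxi} and \ref{lipwn} to obtain \ref{wfixxi} and \ref{lipw}. The only cosmetic difference is that you are slightly more explicit about identifying the $L^2$-limit of $w_n'$ with the weak derivative of the uniform limit $w$, whereas the paper simply asserts that $w_n \to w$ in $W^{1,2}$.
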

\begin{proof} 
	Let $m \geq n+1 \geq 1$. \ref{r3} in particular implies that $K_m g(R_m) \leq g(R_{m-1})/2$, so combining with~\ref{cvg}, we see that
	\begin{align*}
	\|w_m - w_n\|_{\infty}
	& \leq \|w_m - w_{m - 1}\|_{\infty} +{} \cdots {}+ \|w_{n +1} - w_n\|_{\infty} \\
	& \leq 5 (K_m g(R_m) +{} \cdots {}+ K_{n+1}g(R_{n+1})) \\
	&\leq 5 \left(2^{-(m - (n+1))} +{} \cdots {}+ 1\right)K_{n+1}g(R_{n+1}) \\
	& \leq 10K_{n+1}g(R_{n+1}).
	\end{align*}  
	Hence, since~\ref{r3} certainly implies that this tends to $0$ as $n \to \infty$,  the sequence $\{w_n\}_{n=0}^{\infty}$ is uniformly Cauchy, and so converges uniformly to  some $w \in C(-T, T)$, which satisfies~\ref{w-wn}. Conditions~\ref{wfixxi} and~\ref{lipw} follow directly by taking limits in conditions~\ref{wnfixxi} and~\ref{lipwn} respectively.
	
	Now, by~\ref{intwn'} and~\ref{t1}, we have that
	\begin{align} 
	\|w_m' - w_n'\|_{L^2(-T, T)}
	& \leq	\|w_m' - w_{m-1}' \|_{L^2(-T, T)} + \dots + \|w_{n+1}' - w_n'\|_{L^2(-T, T)} \nonumber \\
	& \leq \frac{T_m^2 + \dots + T_{n+1}^2}{16\left(1 + \|\tilde{w}'\|_{L^2(-T, T)}\right)} \nonumber \\
	& \leq \frac{T_{n+1}^2}{8 \left(1 + \|\tilde{w}'\|_{L^2(-T, T)}\right)},\label{L2cauchy}
	\end{align}
	and, since likewise $T_n \to 0$ as $n \to \infty$, $w_n'$ is Cauchy in $L^2(-T, T)$, and it follows that $w_n \to w$ in $W^{1,2}(-T, T)$, and that~\ref{wn'tow'in L2} holds.
\end{proof}
\subsection{Singularity}
Having defined the function $w \in W^{1,2}(-T,T)$, we now check that it exhibits the required oscillating behaviour around each point $x_n$.  The extra oscillations we added in to $w_n$ are small enough in magnitude and far enough from $x_n$ to preserve the behaviour of $w$ as being like that of $w_n$ and hence $\tilde{w}_n$ around $x_n$.  In particular, the limiting behaviour of the difference quotients of $w$ at $x_n$ is the same as that of the difference quotients of $\tilde{w}$ at $0$, for each $n \geq 0$.
\begin{lemma}  
	Let $n \geq 0$.  
	
	Then $\overline{D}w(x_n) = +\infty$ and $\underline{D}w(x_n) = -\infty$.
\end{lemma}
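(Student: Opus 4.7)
The plan is to produce sequences $h_k^\pm \to 0^+$ along which the difference quotients $(w(x_n + h_k^\pm) - w(x_n))/h_k^\pm$ tend to $\pm\infty$; this gives both Dini derivative claims.

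From the definition of $\tilde w$, one has for $t\ne 0$ that
\[
\frac{\tilde w(t)}{t} = (\log\log 1/|t|)\sin\log\log\log 1/|t| + O(1/\log 1/|t|),
\]
so taking $h_k^\pm := \exp(-\exp(\exp(\pm\pi/2 + 2\pi k)))$ (so that $\sin\log\log\log 1/h_k^\pm = \pm 1$) makes $\tilde w(h_k^\pm)/h_k^\pm \to \pm\infty$. For $|h|\le\tau_n$, condition~\ref{wn=twn} gives $w_n(x_n + h) - w_n(x_n) = \tilde w(h)$, and condition~\ref{wfixxi} gives $w(x_n) = w_n(x_n)$, so
\[
\frac{w(x_n+h) - w(x_n)}{h} = \frac{\tilde w(h)}{h} + \frac{w(x_n+h) - w_n(x_n+h)}{h}.
\]
It remains to show the perturbation term is $o(1)$ as $h \to 0$.

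For the key pointwise bound on $w - w_n$, I would use the telescoping $w - w_n = \sum_{m>n}(w_m - w_{m-1})$, whose $m$th summand vanishes off $Y_m$ by~\ref{wn=wn-1} and satisfies $\|w_m - w_{m-1}\|_\infty \le 5K_m g(R_m)$ by~\ref{cvg}, giving
\[
|w(x_n+h) - w_n(x_n+h)| \le \sum_{m>n,\, x_n+h\in Y_m} 5K_m\,g(R_m).
\]
For each $m$ contributing to this sum, combining $|x_n+h-x_m|\le T_m$ with $T_m \le |x_n - x_m|/4$ (since~\ref{t1} together with~\eqref{iv} and $T < 1/2$ yields $T_m \le \sigma_m/2$, and by definition $\sigma_m \le |x_n - x_m|/2$) forces $T_m \le |h|/3$. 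Separately, \ref{r3} with the crude bounds $R_{m-1}, \eta_m \le 1$ and $K_m, M_{m-1} \ge 1$ gives $K_m g(R_m) \le C\cdot 2^{-m} T_m^5$ for a constant $C$ depending only on $\|\tilde w'\|_{L^2(-T,T)}$. Combining these two estimates,
\[
|w(x_n+h) - w_n(x_n+h)| \le 5C|h|^5 \sum_{m>n} 2^{-m} = O(|h|^5).
\]

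Consequently the perturbation term in the identity above is $O(|h|^4) \to 0$, and substituting $h = h_k^\pm$ and letting $k \to \infty$ yields $(w(x_n+h_k^\pm) - w(x_n))/h_k^\pm \to \pm\infty$. Hence $\overline{D}w(x_n) = +\infty$ and $\underline{D}w(x_n) = -\infty$. The main obstacle is the pointwise bound $T_m \le |h|/3$: the uniform estimate $\|w - w_n\|_\infty = O(K_{n+1}g(R_{n+1}))$ divided by $h$ blows up as $h \to 0$, so one must exploit the geometric constraint that $x_n+h\in Y_m$ forces $Y_m$ to be narrow relative to $|h|$, after which the super-polynomial decay of $K_m g(R_m)$ supplied by~\ref{r3} dominates the $1/h$ factor.
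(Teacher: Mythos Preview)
Your proof is correct and follows essentially the same approach as the paper: both arguments bound $|w(x_n+h)-w_n(x_n+h)|$ by observing that any $Y_m$ with $m>n$ containing $x_n+h$ must satisfy $T_m\lesssim |h|$, and then invoke the smallness of $K_m g(R_m)$ from~\ref{r3}. The paper's version differs only in bookkeeping---it selects the least index $i\ge n+1$ with $t\in Y_i$ and uses $\|w-w_{i-1}\|_\infty$ together with $10K_i g(R_i)<2^{-i}T_i\le 2^{-i}|t-x_n|$ to obtain the qualitative bound $2^{-m}$ on the difference-quotient error, rather than your telescoped sum yielding $O(|h|^4)$.
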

\begin{proof} 
	Let $m \geq  n +1$, and let $t \in [-T, T]$ be such that $| t - x_n| \leq T_m$.   Note that if $ t \in Y_i$ for $i \geq n+1$, we have, since~\ref{t1} implies that $T_i \leq \sigma_i$, that
	\[
	|x_n - x_i| \leq |x_n - t| + |t-x_i| \leq |x_n - t| + T_i \leq |x_n - t| + |x_n - x_i|/2,
	\]
	and hence, again by condition~\ref{t1},
	\begin{equation}
	\label{x}
	T_i \leq |x_n - x_i|/2 \leq|x_n - t| \leq T_m.
	\end{equation}
	Since the $T_i$ are decreasing, this implies that $i \geq m$. 
	
	If $t \notin Y_i$ for any $i \geq n+1$ then $w(t) = w_n (t)$ by~\ref{wn=wn-1}, and the following argument is trivial.  Otherwise choose the least $i \geq n+1$ such that $ t \in Y_i$, so $w_n (t) = w_{i-1}(t)$.  Then by~\ref{w-wn},~\ref{r3}, and~\eqref{x}, 
	\[
	|w(t) - w_n (t)| =  |w(t) - w_{i-1}(t)|\leq\|w-w_{i-1}\|_{\infty}  \leq 10K_{i} g(R_i) < 2^{-i}\,T_i \leq 2^{-i} |t- x_n|.
	\]
	Hence by~\ref{wfixxi} and since, by the above argument, $i \geq m$,  we have that
	\begin{equation}
	\label{diffquot}
	\left|\frac{w(t) - w(x_n)}{t-x_n} - \frac{w_n (t) - w_n (x_n)}{t-x_n} \right|  =  \left| \frac{w(t) - w_n (t)}{t-x_n} \right| \leq \frac{2^{-i}|t- x_n|}{|t-x_n|}  \leq
	2^{-m}.
	\end{equation}
	As $t \to x_n$, we may choose $m \to \infty$. Hence by~\ref{wn=twn} and the definition of $\tilde{w}_n$, 
	\begin{align*}
	\overline{D}w(x_n) = \overline{D}w_n(x_n) = \overline{D} \tilde{w}_n(x_n) 
	& = +\infty,\\
	\shortintertext{and}
	\underline{D}w(x_n) = \underline{D}w_n(x_n) = \underline{D}\tilde{w}_n (x_n) 
	& = -\infty.
	\qedhere
\end{align*}
\end{proof}
\subsection{Construction of the Lagrangian}
We now construct the Lagrangian which shall define the variational problem of which $w$ will be the unique minimizer. Our basic weight function $\tilde{\phi} \colon [-T, T] \times \mathbb{R} \to [0, \infty)$ will be given by
\[
\tilde{\phi}(t,y) =
\begin{cases}
0 & t = 0, \\
5 \psi(t)|g(t)| & |y| \geq 5 |g(t)|, \\
\psi(t)|y| & |y| \leq 5 |g(t)|.
\end{cases}
\]
We need some bound of the form $|\tilde{\phi}(t,y)| \leq c |g(t)| \psi(t)$ to ensure continuity of $\tilde{\phi}$; it turns out (see lemma~\ref{lemma1}) that sensitive tracking of $|y|$, which shall represent the distance of a putative minimizer from our constructed function $w$, only for $|y| \leq 5|g(t)|$ suffices in the proof of minimality.  Our function $\tilde{w}$ was constructed precisely so that~\eqref{wn''cont} and hence~\eqref{psi2} hold, and hence that this $\tilde{\phi}$ is continuous.

We in fact will find it useful to split $\tilde{\phi}$ into the summands by which we defined~$\psi$.  More precisely, we define for each $n \geq 0$ our translated weight functions $\tilde{\phi}_n^1, \tilde{\phi}_n^2\colon [-T, T] \times \mathbb{R} \to [0, \infty)$ as follows. We recall that we need extra weight only on $Y_n$, so we define for $k = 1,2$ and $(t, y) \in Y_n \times \mathbb{R}$,
\[
\tilde{\phi}_n^k (t,y) = 
\begin{cases}
0 & t= x_n, \\
5 \psi_n^k (t)\theta_n |g_n(t)| & |y| \geq 5 \theta_n |g_n(t)|, \\
\psi_n^k(t) |y| & |y| \leq 5\theta_n |g_n(t)|;
\end{cases}
\] 
and extend to a function on  $[-T, T] \times \mathbb{R}$ by setting it to continue constantly at the value attained at the endpoints of $Y_n$, i.e.\ defining for $(t,y) \in ([-T, T] \setminus Y_n) \times \mathbb{R}$ 
\[
\tilde{\phi}_n^k(t,y) = 
\begin{cases}
5\psi^k(T_n) \theta_n g(T_n) & |y| \geq 5\theta_n g(T_n), \\
\psi^k(T_n) |y|& |y| \leq 5\theta_n g(T_n).
\end{cases}
\]
Define  $\tilde{\phi}_n \colon [-T, T] \times \mathbb{R} \to [0, \infty)$ by $\tilde{\phi}_n (t,y) = \tilde{\phi}_n^1 (t,y) + \tilde{\phi}_n^2 (t,y)$, which is continuous by~\eqref{psi2}. 

We claim that for fixed $t \in [-T, T]$, for all $ n\geq 0$ and $k=1,2$, that 
\begin{gather*} 
\tilde{\phi}_n^k (t,y) \leq \tilde{\phi}_n^k(t, z)\ \textrm{whenever}\ |y| \leq |z|;\label{tphiinc}\\
\mathrm{Lip}(\tilde{\phi}_n^k(t,\cdot)) \leq \max\{\psi_n^k (t), \psi^k (T_n)\};\label{liptphi}\ \textrm{and}\\
\tilde{\phi}_n^k (t, 0) = 0.\label{tp0is0}
\end{gather*}
The last result is obvious, as are the other results for $t = x_n$.  Suppose $ t \in Y_n \setminus \{x_n\}$.  First consider the case in which $|y| \leq |z| \leq 5\theta_n |g_n(t)|$.  Then 
\begin{align*} 
\tilde{\phi}_n^k (t, z) - \tilde{\phi}_n^k(t, y) 
&  =   \psi_n^k (t)|z| - \psi_n^k(t)  |y| \geq 0;\\
\shortintertext{and so}
|\tilde{\phi}_n^k (t,z) - \tilde{\phi}_n^k (t,y)| 
& = \psi_n^k(t)(|z| - |y|) \leq \psi_n^k (t) |z-y|,
\end{align*}  
as required, giving that $\mathrm{Lip}(\tilde{\phi}_n^k(t, \cdot)) \leq \psi_n^k(t)$ for such values.

In the case when $5 \theta_n |g(t)| \leq |y| \leq |z|$, we have that
\[
\tilde{\phi}_n^k (t,y) = 5 \theta_n |g_n(t)|\psi_n^k(t) = \tilde{\phi}_n^k (t,z),
\]
and so both results are immediate.  In the case in which $|y| \leq 5 \theta_n | g_n (t) | \leq |z|$, we have that
\begin{align*}
\tilde{\phi}_n^k (t,z) - \tilde{\phi}_n^k (t,y)
& = 5\theta_n |g_n(t)|\psi_n^k(t) - \psi_n^k(t)|y| \geq 0;\\
\shortintertext{and so}
|\tilde{\phi}_n^k (t, z) - \tilde{\phi}_n^k (t,y)| 
&  = \psi_n^k (t)(5\theta_n |g_n(t)| - |y|) \leq \psi_n^k(t)(|z| - |y|) \leq \psi_n^k(t)| z-y|.
\end{align*} 
Thus in this case again $\mathrm{Lip}(\tilde{\phi}_n^k (t, \cdot)) \leq \psi_n^k (t)$.  Both results follow similarly for $t \notin Y_n$: we obtain instead that $\mathrm{Lip}(\tilde{\phi}_n^k (t, \cdot)) \leq \psi^k (T_n)$, hence the full result claimed.  Hence for all $t \in [-T, T]$, $|y| \mapsto \tilde{\phi}_n (t, y)$ is an increasing function with Lipschitz constant at most $\max\{\psi_n(t), \psi (T_n)\}$, and $\tilde{\phi}(t, 0) =0$.  This Lipschitz constant blows up as we approach $x_n$, since $\psi_n (t) \geq |\tilde{w}_n''(t)| \to \infty$, but we shall not need to use a Lipschitz estimate of this function arbitrarily close to $x_n$.

Defining $\phi_n \colon [-T, T] \times \mathbb{R} \to [0, \infty)$ by $\phi_n(t,y) = \sum_{i=0}^n \tilde{\phi}_i (t,y)$ gives a sequence of continuous functions such that for each $t \in [-T, T]$, 
\begin{gather} 
\phi_n (t, y) \leq \phi_n (t, z) \ \textrm{whenever $ |y| \leq |z|;$}\label{phininc}\\
\mathrm{Lip}(\phi_n(t, \cdot)) \leq \sum_{i=0}^n \left(\max\{\psi_i (t), \psi (T_i)\}\right);\ \label{lipphin}\textrm{and}\\
\phi_n (t, 0 ) = 0.\label{phin0is0}
\end{gather} 
For $n \geq 1$, by~\ref{t4}, we see that $0 \leq \tilde{\phi}_n (t,y) \leq \sup_{t \in Y_n} 5 \psi_n(t)\theta_n |g_n(t)| \leq 2^{-n}$ for all $(t, y) \in [-T, T] \times \mathbb{R}$. Hence the sequence $\{\phi_n\}_{n=0}^{\infty}$ converges uniformly to a continuous function given by $\phi(t,y) = \sum_{i=0}^{\infty} \tilde{\phi}_i (t,y)$, satisfying
\begin{gather}
\| \phi\|_{\infty} \leq \| \tilde{\phi}_0\|_{\infty} + \sum_{i=1}^{\infty}\|\tilde{\phi}_i\|_{\infty} \leq \|\tilde{\phi}_0\|_{\infty} + \sum_{i=1}^{\infty}2^{-i} = \|\tilde{\phi}_0\|_{\infty} + 1 = C;\ \textrm{and}\label{phibd}\\ 
\|\phi- \phi_n\|_{\infty} \leq \sum_{i=n+1}^{\infty} \|\tilde{\phi}_i\|_{\infty} \leq \sum_{i = n+1}^{\infty}2^{-i} 
= 2^{-n}.\label{phi-phin}
\end{gather}
By passing to the limit in~\eqref{phininc} and~\eqref{phin0is0} we see that for each $ t \in [-T, T]$, 
\begin{gather} 
\phi(t, y) \leq \phi(t, z)\ \textrm{whenever $ |y | \leq |z|$};\ \textrm{and} \label{phiinc}\\ 
\phi(t, 0) = 0. \label{phi0is0}
\end{gather} 
We shall let $\phi = \phi^1 + \phi^2$, where $\phi^k = \sum_{i=0}^{\infty} \tilde{\phi}_i^k$ for $k=1,2$.  

We can now define a functional $\mathscr{L}$ on $W^{1,1}(-T, T)$, with a continuous Lagrangian, superlinear and convex in $p$, by
\[
\mathscr{L}(u) = \int_{-T}^{T} \left( \phi(t, u(t) - w(t)) +(u'(t))^2 \right)\, dt,
\]
and consider minimizing $\mathscr{L}$ over $\mathscr{A}_{w(-T,), w(T)}$. Since evidently $\mathscr{L}$ is coercive on $W^{1,2}(-T, T)$, a minimizer over $\mathscr{A}_{w(-T), w(T)}$ exists in $W^{1,2}(-T, T)$, and we can regard the minimization problem as being defined on $W^{1,2}(-T, T)$.
\subsection{Minimality}
We shall find certain approximations to our functional $\mathscr{L}$ useful, and so will define for all $n \geq 0$ the functional $\mathscr{L}_n$ on $W^{1,2}(-T, T)$ by 
\[
\mathscr{L}_n(u) = \int_{-T}^{T} \left(\phi(t, u(t) - w_n(t)) + (u'(t))^2\right)\, dt.
\]
Working with these approximations is much easier, since there is only a finite number of singularities in $w_n$.  So it is important to know what error we incur by moving to these approximations.  This is shown in the next lemma.
\begin{lemma}
	\label{finitejump} 
	Let $u \in W^{1,2}(-T, T)$ and  $n\geq 0$.  Then 
	\[
	\left|\left(\mathscr{L}(u) - \mathscr{L}(w)\right) - \left(\mathscr{L}_n (u) - \mathscr{L}_n (w_n)\right)\right| \leq \frac{ T_{n+1}^2}{2}.
	\]
\end{lemma}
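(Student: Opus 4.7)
Since $\phi(t,0)=0$ by~\eqref{phi0is0} and $\mathscr{L}$, $\mathscr{L}_n$ differ only in their first summands, the expression inside the absolute value telescopes to
\[
\int_{-T}^{T}\!\bigl[\phi(t,u(t)-w(t)) - \phi(t,u(t)-w_n(t))\bigr]\,dt \;-\; \int_{-T}^{T}\!\bigl[(w'(t))^2 - (w_n'(t))^2\bigr]\,dt.
\]
My plan is to bound each of these two integrals separately by roughly $T_{n+1}^2/4$, using the fact that $w_n \to w$ both uniformly and in $W^{1,2}$, together with the weight-function sum $M_n$ and the small cover $G_n$ introduced in~\eqref{Mn} and~\eqref{Fnmeas}.

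For the second integral I factor $(w')^2 - (w_n')^2 = (w'-w_n')(w'+w_n')$ and apply Cauchy--Schwarz. From~\ref{intwn'} a telescoping argument as in~\eqref{L2cauchy} shows that $\|w_n'\|_{L^2(-T,T)} \leq \|\tilde w'\|_{L^2(-T,T)} + 1$, and similarly $\|w'\|_{L^2(-T,T)} \leq \|\tilde w'\|_{L^2(-T,T)} + 1$, so $\|w'+w_n'\|_{L^2} \leq 2\bigl(1+\|\tilde w'\|_{L^2(-T,T)}\bigr)$. Combined with~\ref{wn'tow'in L2} this yields
\[
\left|\int_{-T}^{T}\bigl[(w')^2-(w_n')^2\bigr]\right| \leq \frac{T_{n+1}^2}{8(1+\|\tilde w'\|_{L^2(-T,T)})} \cdot 2(1+\|\tilde w'\|_{L^2(-T,T)}) = \frac{T_{n+1}^2}{4}.
\]

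For the first integral I split the domain across the small open cover $G_n$ of $\{x_i\}_{i=0}^{m_n}$. On $G_n$ I use the global bound $\|\phi\|_\infty \leq C$ from~\eqref{phibd}, so the integrand has modulus at most $2C$, and by~\eqref{Fnmeas} the contribution is at most $2C \cdot T_{n+1}^2/(16C) = T_{n+1}^2/8$. On $[-T,T]\setminus G_n$ I split $\phi = \phi_{m_n} + (\phi - \phi_{m_n})$ and estimate
\[
|\phi(t,y)-\phi(t,z)| \leq \mathrm{Lip}(\phi_{m_n}(t,\cdot))|y-z| + 2\|\phi-\phi_{m_n}\|_\infty,
\]
where by~\eqref{lipphin} and~\eqref{Mn} the Lipschitz constant is bounded by $M_n$ and by~\eqref{phi-phin} the uniform remainder is at most $2^{-m_n}$. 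Taking $y=u(t)-w(t)$, $z=u(t)-w_n(t)$ gives $|y-z|\leq \|w-w_n\|_\infty \leq 10 K_{n+1} g(R_{n+1})$ by~\ref{w-wn}, so the contribution to this part is at most
\[
2T\bigl(M_n \cdot 10 K_{n+1} g(R_{n+1}) + 2\cdot 2^{-m_n}\bigr).
\]
The $2^{-m_n}$ term is controlled via~\eqref{mn} (contributing at most $T_{n+1}^2/8$ since $T<1$), while the $M_n K_{n+1}g(R_{n+1})$ term is driven to be negligibly small by~\ref{r3}, which was engineered precisely to absorb $M_n$, $K_{n+1}$, $T_{n+1}$, and the $L^2$-norm constants. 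Summing the three contributions $T_{n+1}^2/8 + T_{n+1}^2/8 + T_{n+1}^2/4$ (plus the negligible piece) gives the desired $T_{n+1}^2/2$.

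The main nuisance is purely bookkeeping: ensuring the ``negligible'' term coming from $M_n K_{n+1}g(R_{n+1})$ really fits inside the budget left over. This is exactly why condition~\ref{r3} was stated as a single compound inequality tying $g(R_{n+1})$ to $T_{n+1}^5$, $K_{n+1}^2$, $M_n$, and $\|\tilde w'\|_{L^2(-T,T)}$ simultaneously: no clever cancellation is needed, one simply reads off that $20T M_n K_{n+1}g(R_{n+1})$ is orders of magnitude smaller than $T_{n+1}^2$.
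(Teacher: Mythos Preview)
Your proposal is correct and follows essentially the same route as the paper: the same split into the $\phi$-difference integral and the $(w')^2-(w_n')^2$ integral, the same Cauchy--Schwarz argument for the latter, and the same $G_n$ versus $[-T,T]\setminus G_n$ decomposition with the $\phi_{m_n}$-Lipschitz estimate for the former. The only cosmetic difference is that the paper absorbs the $M_n K_{n+1}g(R_{n+1})$ term \emph{pointwise} into $T_{n+1}^2/16$ via~\ref{r3} before integrating, so the off-$G_n$ contribution is a clean $T_{n+1}^2/8$ with no leftover ``negligible'' piece; you instead carry that piece to the end and rely on the slack in $2T<1$ to accommodate it, which works for the same reason.
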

\begin{proof}  
	We first estimate  $|\mathscr{L} (u) - \mathscr{L}_n(u)|$. Recall our definitions of $m_n \geq n$, $M_n \geq 0$,  and $G_n \supseteq \bigcup_{i=0}^{m_n}\{x_i\}$ from the beginning of the construction.  
	Let $t \in [-T, T] \setminus G_n$. We see by~\eqref{lipphin}, and precisely by the choice of $M_n$ in~\eqref{Mn}, that
	\[
	\mathrm{Lip}(\phi_{m_n}(t, \cdot))\leq \sum_{i=0}^{m_n} \left(\max\{\psi_i (t), \psi (T_i)\}\right) \leq M_n.
	\]
	This is the one occasion on which we shall use the (in principle very large) Lipschitz constant of $\phi_{m_n}(t, \cdot)$.  The purpose of the open set $G_n$ was to avoid using this number arbitrarily near $\{x_i\}_{i=0}^{m_n}$, at which points it blows up. 
	
	Using~\ref{w-wn} and~\ref{r3} we see that
	\begin{align*}
	|\phi_{m_n}(t, u-w) - \phi_{m_n}(t, u-w_n)| 
	& \leq \mathrm{Lip}(\phi_{m_n}(t, \cdot))|(u(t) - w(t)) - (u(t) - w_n (t))| \\
	& \leq M_n \|w-w_n\|_{\infty} \\
	& \leq 10 M_n K_{n+1}g(R_{n+1})\\
	& \leq  \frac{T_{n+1}^2}{16}.
	\end{align*}
	The choice of $m_n$ in~\eqref{mn} and~\eqref{phi-phin} imply that 
	\[
	\| \phi - \phi_{m_n}\|_{\infty} \leq 2^{-m_n} \leq\frac{T_{n+1}^2}{32}.
	\]
	Hence
	\begin{align*} 
	|\phi(t, u - w) - \phi(t, u- w_n)| 
	&\leq |\phi(t, u-w) - \phi_{m_n} (t, u-w)| \\
	&\phantom{\leq} {}+ |\phi_{m_n}(t, u-w) - \phi_{m_n}(t, u-w_n)| \\
	&\phantom{\leq} {}+ |\phi_{m_n}(t, u-w_n) - \phi(t , u-w_n)|\\
	&\leq 2 \|\phi - \phi_{m_n}\|_{\infty} + \frac{T_{n+1}^2}{16} \\
	& \leq \frac{2 T_{n+1}^2}{32} + \frac{T_{n+1}^2}{16}\\
	& = \frac{T_{n+1}^2}{8}.
	\end{align*}
	Now,  using~\eqref{phibd} and the choice of the measure of $G_n$ in~\eqref{Fnmeas}, we have that
	\[
	\int_{G_n} |\phi(t, u - w) - \phi(t, u - w_n)|  \leq  2 \int_{G_n} \|\phi\|_{\infty}  \leq  2C \lambda(G_n) \leq 2C \frac{T_{n+1}^2}{16C} = \frac{T_{n+1}^2}{8}. 
	\]
	Combining these estimates, we see that
	\begin{align*}
	\lefteqn{|\mathscr{L}(u) - \mathscr{L}_n (u)| }\\
	& =  \left|\int_{-T}^T \left(\phi(t, u - w)+  (u')^2 \right) - \left( \phi(t, u - w_n) +  (u')^2 \right) \right| \\
	& \leq \int_{-T}^{T} |\phi(t, u-w) - \phi(t, u-w_n)| \\
	& =   \int_{G_n} |\phi(t, u-w) - \phi(t, u-w_n)|+ \int_{[-T, T] \setminus G_n} |\phi(t, u-w) - \phi(t, u-w_n)| \\
	& \leq   \frac{T_{n+1}^2}{8} + \int_{[-T, T] \setminus G_n} \frac{T_{n+1}^2}{8} \\
	& \leq \frac{T_{n+1}^2}{8}  + \frac{T_{n+1}^2}{8}\\
	& = \frac{T_{n+1}^2}{4}.
	\end{align*}
	Now we estimate $|\mathscr{L}(w) - \mathscr{L}_n (w_n)|$.  First we  compare $w'$ and $w_n'$ with $w_0 = \tilde{w}$ in the $L^2$-norm, noting that~\ref{wn'tow'in L2} and~\eqref{L2cauchy} in particular allow the estimates
	\[
	\|w'\|_{L^2(-T, T)} \leq 1+ \|\tilde{w}'\|_{L^2(-T, T)},\ \textrm{and}\  \|w_n'\|_{L^2(-T, T)} \leq 1+ \|\tilde{w}'\|_{L^2(-T, T)}.
	\]
	Hence it follows that
	\[
	\|w' + w_n'\|_{L^2(-T, T)} \leq \|w'\|_{L^2(-T, T)} + \| w_n'\|_{L^2(-T, T)} \leq 2\left(1 +  \|\tilde{w}'\|_{L^2(-T, T)} \right).
	\]
	Thus using~\eqref{phi0is0}, Cauchy-Schwartz,  and~\ref{wn'tow'in L2}, we see that
	\begin{align*}
	|\mathscr{L}(w) - \mathscr{L}_n (w_n)|  \leq \int_{-T}^{T} \left|(w')^2 - (w_n')^2\right|
	& = \int_{-T}^T \left(|w' + w_n'| | w' - w_n'|\right) \\
	& \leq  \|w' + w_n'\|_{L^2(-T, T)} \|w' - w_n'\|_{L^2(-T, T)}\\
	& \leq \frac{2\left(1+ \|\tilde{w}'\|_{L^2(-T, T)}\right) T_{n+1}^2}{8\left(1 + \|\tilde{w}'\|_{L^2(-T, T)}\right)}\\
	& = \frac{T_{n+1}^2}{4}.
	\end{align*}
	Combining these two estimates we see that
	\begin{align*} 
	\left|\left(\mathscr{L}(u) - \mathscr{L}(w)\right) - \left(\mathscr{L}_n (u) - \mathscr{L}_n (w_n)\right)\right|
	& \leq |\mathscr{L}(u) - \mathscr{L}_n (u)| + |\mathscr{L}(w) - \mathscr{L}_n (w_n)| \\
	& \leq \frac{T_{n+1}^2}{4} + \frac{T_{n+1}^2}{4} \\
	& =  \frac{T_{n+1}^2}{2} . 
\qedhere
	\end{align*}
\end{proof}
We now show that  $w$ is the unique solution of our minimization problem.   The basic idea on $\tilde{w}$ which we sketched at the beginning of this section is mimicked locally on $w$ around each $x_n$; more precisely we in fact argue with $w_n$ and then either show that for some $n$ this suffices to give the result for $w$, or pass to the limit.  The techniques of our proof show in fact that $w_n$ is the unique minimizer of the variational problem
\[
W^{1,2}(-T, T) \ni u \mapsto \mathscr{L}_n(u)
\]
over those $u$ such that $u(\pm T) = w_n (\pm T) (= w(\pm T))$.

Let $u \in W^{1,2}(-T, T)$ be a minimizer of $\mathscr{L}$ over $\mathscr{A}_{w(-T), w(T)}$, and suppose for a contradiction that $u \neq w$.  Note that a minimizer certainly exists, since the Lagrangian is continuous, and superlinear and convex in $p$.  We now make a number of estimates, with the eventual aim of showing that 
\[
\mathscr{L}(u) - \mathscr{L}(w)  = \int_{-T}^T \left((u')^2 + \phi(t, u - w) - (w')^2 \right)> 0,
\]
which contradicts the choice of $u$ as a minimizer.  If $u(x_n) = w(x_n)$ for all $ n\geq 0$, then  the proof is in principle an easy application of integration by parts as discussed above on the complement of the closure of the points $\{x_n\}_{n= 0}^{\infty}$. (In the case that $\{x_n\}_{n=0}^{\infty}$ forms a dense set in $[-T, T]$, we should immediately have $u=w$ by continuity, thus concluding the proof of minimality of $w$ without using the assumption that $u \neq w$.)   Should $w(x_n) \neq u(x_n)$ for some $ n \geq 0$, further argument is required.  The next lemma shows us that \emph{since $u$ is a minimizer}, it cannot be too badly behaved around any such $x_n$. 
\begin{lemma}
	\label{lemma1} 
	Let $n \geq 0$ be such that $u(x_n) \neq w(x_n)$.  Let $a_n, b_n >0$ be such that $J_n\df (x_n- a_n, x_n+ b_n)$ is the connected component in $[-T, T]$ containing $x_n$ of those points such that $
	| u(t) - w(x_n)| > 3 \theta_n|g_n(t)|$, 
	so $|u(x_n-a_n) - w(x_n)| = 3\theta_n g(a_n)$ and $|u(x_n + b_n) - w(x_n)| = 3\theta_n g(b_n)$.
	(Note that $J_n \subsetneq [-T, T] $ since $u$ and $w$ agree at $\pm T$ and so by~\ref{lipw} 
	\[
	\left|u(\pm T) - w(x_n)\right| = \left|w(\pm T) - w(x_n)\right| \leq 2 \theta_n \left|g_n (\pm T)\right|.)
	\]
	
	Then 
	\begin{equation}
	\label{vnbig}
	\begin{cases}
	|(u - w_n) (t)| \geq \theta_n g(b_n)\ \textrm{for}\ t \in [x_n, x_n + b_n] & b_n \geq a_n,\\
	|(u - w_n) (t)|\geq \theta_n g(a_n) \ \textrm{for}\ t \in [x_n - a_n, x_n] & a_n \geq b_n,
	\end{cases}
	\end{equation}
	and 
	\begin{equation*}
	|u(t) - w(x_n) | \leq 3 \theta_n |g_n(t)|\ \textrm{for $t \notin J_n$.}
	\end{equation*}
\end{lemma}
\begin{proof}
	We suppose that $u(x_n) > w(x_n)$.  The argument for the case in which $u(x_n) < w(x_n)$ is very similar. 	We choose $\alpha_n, \beta_n >0$ such that $(x_n - \alpha_n, x_n + \beta_n)$ is the connected component in $[-T, T]$ containing $x_n$ of those points such that  $|u(t) - w(x_n)| > 2 \theta_n |g_n (t)|$.  So $a_n \leq \alpha_n$ and $ b_n \leq \beta_n$, and $[x_n - \alpha_n, x_n + \beta_n] \subseteq [-T, T]$.  We prove that $u$ is convex on $(x_n-\alpha_n, x_n+\beta_n)$.  In the case in which $u(x_n) < w(x_n)$, we would prove that $u$ is concave on $(x_n - \alpha_n, x_n + \beta_n)$.  Suppose for a contradiction that on some non-trivial subinterval $(t_1, t_2)$ of $(x_n - \alpha_n, x_n + \beta_n)$, $u$ lies above its chord between the points $t_1$ and $t_2$, i.e.\ that there exists some $\mu \in [0, 1]$ such that 
	\[
	u(\mu t_1 + (1- \mu )t_2) > \mu u (t_1) + ( 1- \mu) u (t_2).
	\]
	The basic idea is that in this case we can redefine $u$ to be affine on $(t_1, t_2)$ or some subinterval of $(t_1, t_2)$, producing a function which does not increase the weight term $\phi(t, \cdot - w(t))$ of the integrand, since it only moves closer to $w$, and strictly decreases the  gradient term, since it has constant gradient.  Let $z \colon [-T, T] \to \mathbb{R}$ be the affine function with graph passing through $(t_1, u(t_1))$ and $(t_2, u(t_2))$, so
	\[
	z(t) = \frac{u(t_2) - u(t_1)}{t_2 - t_1}\cdot (t-t_1) + u(t_1).
	\]
	So we have by assumption on $t_1, t_2$ that
	\[
	z(\mu t_1 + (1-\mu)t_2) = \mu u(t_1) + (1- \mu) u(t_2) < u(\mu t_1 + (1-\mu)t_2).
	\]
	Passing to connected components if necessary, we can assume that $z < u$ on $(t_1, t_2)$.  We claim that adding a certain constant value onto the function $z$ gives an affine function $\tilde{z}$ such that on some subinterval $(\tilde{t}_1, \tilde{t}_2)$ of $(t_1, t_2)$, we have
	\[
	w(x_n) + 2 \theta_n |g_n| \leq \tilde{z} < u.
	\]
	We then show that this contradicts the choice of $u$ as a minimizer.
	
	Since $z$ is affine and $|g_n|$ is monotonic and concave on $[-T, x_n]$ and $[x_n, T_0]$, the equation $z = w(x_n) + 2 \theta_n|g_n|$ can in principle have no or up to three distinct solutions on $(t_1,t_2)$.  If there is at most one solution, then  since $z(t_i) = u(t_i) \geq w(x_n) + 2\theta_n |g_n(t_i)|$ for $i=1,2$, evidently $z \geq w(x_n) + 2 \theta_n |g_n|$ on $(t_1, t_2)$.  So we need not modify $z$ at all to get our required $\tilde{z}$.
	
	The case of three distinct solutions is in fact impossible.   Suppose we had three such points $s_1, s_2, s_3 \in (t_1, t_2)$.  Again by the elementary properties of $g_n$  and $z$, all three points cannot lie on one side of $x_n$.  So suppose $s_1 < x_n \leq s_2 < s_3$.  The principle here is that $z$ must have a positive gradient if it intersects $w(x_n) + 2 \theta_n |g_n|$ twice on the right of $x_n$.   This then forces $z$ to lie below  $w(x_n) + 2 \theta_n |g_n|$ for all points $t <s_1 < x_n$, which is a contradiction since it agrees with $u$ at $t = t_1 < s_1$, and $u$ lies above $w(x_n) + 2 \theta_n |g_n|$ at this point.  More precisely, for $t < x_n$, we have that
	\[
	z' = \frac{ 2 \theta_n|g_n (s_3)| - 2 \theta_n |g_n (s_2)| }{s_3 - s_2} = \frac{2 \theta_n g_n (s_3) - 2 \theta_n g_n (s_2)}{s_3 - s_2} > 0 > -2 \theta_n g_n' (t).
	\]
	Since $t_1 < s_1 < x_n$, we have $| g_n (s_1)| = - g_n (s_1)$ and $|g_n (t_1)| = - g_n (t_1)$, so
	\begin{align*}
	z(t_1)  = z(s_1) - \int_{t_1}^{s_1} z'(t) \,dt 
	& < w(x_n) -2 \theta_n g_n(s_1) - \int_{t_1}^{s_1}(-2 \theta_n g_n'(t)) \,dt \\
	&= w(x_n) -2 \theta_n g_n (t_1) \\
	& = w(x_n) + 2 \theta_n | g_n(t_1)|	.
	\end{align*}
	This  is a contradiction since $z(t_1) = u(t_1) \geq w(x_n) +  2\theta_n |g_n(t_1)|$.  We can deal similarly with the case $s_1 < s_2 \leq x_n < s_3$.
	
	So it remains to deal with the case in which  we have precisely two distinct solutions $(s_1, s_2)$---this is the case in which we have in general to add a constant to $z$, since it is possible that $w(x_n) + 2 \theta_n |g_n|$ lies above $z$ on some subinterval of $(s_1, s_2)$.  The same considerations as in the preceding paragraph show that we must have both solutions lying on one side of $x_n$.  Suppose $x_n \leq s_1 < s_2$.  Then $2|g_n| = 2g_n$ is $C^{\infty}$ on $(s_1, s_2)$, so applying the mean value theorem we see that there is a point $s_0 \in (s_1, s_2)$ such that
	\[
	2 \theta_n g_n' (s_0) = \frac{2 \theta_n g_n (s_2) - 2 \theta_n g_n (s_1)}{s_2 - s_2} = \frac{z(s_2) - z(s_1)}{s_2 - s_1} = z'.
	\]
	Define $\tilde{z}$ by
	\[
	\tilde{z}(t) = z'\cdot (t-s_0) + w(x_n) + 2 \theta_n g_n(s_0),
	\]
	the tangent to $w(x_n) + 2\theta_n g_n$ at $s_0$, so, since $s_0 \in (s_1, s_2) \subseteq (t_1, t_2) \subseteq (x_n - \alpha_n, x_n + \beta_n)$,
	\[
	\tilde{z}(s_0) = w(x_n) + 2 \theta_n g_n (s_0) = w(x_n) + 2 \theta_n |g_n (s_0)|< u(s_0).
	\]
	Let $(\tilde{t}_1, \tilde{t}_2) $ be the connected component containing $s_0$ of those points at which $u > \tilde{z}$.  Since $s_0 \in (s_1, s_2)$, and $z(s_i) = w(x_n) + 2\theta_n g_n (s_i)$ for $ i = 1,2$, concavity of $g$ implies that $w(x_n) + 2 \theta_n g_n (s_0) \geq z(s_0)$.  Since $\tilde{z}(s_0) = w(x_n) + 2 \theta_n g_n(s_0)$ by definition, and $z' = \tilde{z}'$, we have that $\tilde{z} \geq z$ everywhere.  So $u > \tilde{z}$ implies that $u > z$, thus $(\tilde{t}_1, \tilde{t}_2) \subseteq (t_1, t_2)$.
	
	We claim that $\tilde{z} \geq w(x_n) + 2 \theta_n |g_n|$ on $(\tilde{t}_1, \tilde{t}_2)$.  Since $s_0 > s_1 \geq x_n$ and $\tilde{z}(s_0) = w(x_n) + 2 \theta_n |g_n (s_0)|$, with $\tilde{z}' = z' = 2\theta_n g_n'(s_0)$, by concavity of $g$ we have that $\tilde{z} \geq w(x_n) + 2 \theta_n|g_n| $ on $(x_n, T)$.  Suppose there existed $s \in (\tilde{t}_1, x_n]$ such that $\tilde{z}(s) < w(x_n) + 2 \theta_n|g_n (s)| = w(x_n) - 2 \theta_n g_n (s)$.  Then we see as before, since $\tilde{z}' > 0 > - 2\theta_n g_n'(t)$ for $t < x_n$, that
	\begin{align*}
	\tilde{z}(\tilde{t}_1)  = \tilde{z}(s) - \int_{\tilde{t}_1}^{s} \tilde{z}'(t) \,dt 
	& < w(x_n) -2 \theta_n g_n (s) - \int_{\tilde{t}_1}^{s} (-2 \theta_n g_n'(t)) \,dt \\
	& = w(x_n) -2 \theta_n g_n (\tilde{t}_1) \\
	& = w(x_n) + 2 \theta_n |g_n(\tilde{t}_1)|,
	\end{align*}
	which contradicts $\tilde{z} (\tilde{t}_1) = u(\tilde{t}_1) \geq w(x_n) + 2 \theta_n |g_n (\tilde{t}_1)|$.  So $\tilde{z} \geq w(x_n) + 2 \theta_n |g_n|$ on $(\tilde{t}_1, \tilde{t}_2)$ indeed.  The case in which $s_1 < s_2 \leq x_n$ is similar.  So we have constructed an affine $\tilde{z}$ as claimed.
	
	Thus, since $w \leq w(x_n) + 2 \theta_n |g_n|$ by~\ref{lipw},  we have on $(\tilde{t}_1, \tilde{t}_2)$ that
	\begin{equation}
	\label{affine}
	|u-w| = u-w \geq \tilde{z} - w = |\tilde{z} - w|.
	\end{equation}
	Since $u > \tilde{z}$ on $(\tilde{t}_1, \tilde{t}_2)$, where $\tilde{z}$ is affine, but $u=\tilde{z}$ at the endpoints, we know that $u$ is not affine on $(\tilde{t}_1, \tilde{t}_2)$, so we have strict inequality in the Cauchy-Schwartz inequality, thus
	\begin{align}
	\int_{\tilde{t}_1}^{\tilde{t}_2} (u')^2  = \frac{1}{\tilde{t}_2 - \tilde{t}_1} \left(\int_{\tilde{t}_1}^{\tilde{t}_2} 1^2\right)\left( \int_{\tilde{t}_1}^{\tilde{t}_2} (u')^2 \right) 
	& >  \frac{1}{\tilde{t}_2 - \tilde{t}_1} \left(\int_{\tilde{t}_1}^{\tilde{t}_2} u'\right)^{\! 2} \nonumber\\
	& =  \frac{(u(\tilde{t}_2) - u(\tilde{t}_1))^2}{\tilde{t}_2 - \tilde{t}_1} \nonumber\\
	& =  (\tilde{t}_2 - \tilde{t}_1) \left(\frac{z(\tilde{t}_2) - z(\tilde{t}_1)}{\tilde{t}_2 - \tilde{t}_1}\right)^{\! 2}\nonumber \\
	& =  (\tilde{t}_2 - \tilde{t}_1) (\tilde{z}')^2 \nonumber\\
	& = \int_{\tilde{t}_1}^{\tilde{t}_2} (\tilde{z}')^2.\label{holder}
	\end{align}
	Hence defining $\tilde{u}\colon [-T, T] \to \mathbb{R}$ by
	\[
	\tilde{u}(t) =
	\begin{cases}
	u(t) & t \notin (\tilde{t}_1, \tilde{t}_2), \\
	\tilde{z}(t) & t \in (\tilde{t}_1, \tilde{t}_2);
	\end{cases}
	\]
	we obtain a function $\tilde{u} \in W^{1, 2}(-T, T)$ with $\tilde{u}(\pm T) = w(\pm T)$ and such that, using~\eqref{holder},~\eqref{affine}, and~\eqref{phiinc},
	\begin{align*}
	\mathscr{L}(\tilde{u}) 
	& =  \int_{-T}^{T} \left((\tilde{u}')^2 + \phi(t, \tilde{u} - w) \right)\\
	& =  \int_{[-T, T] \setminus (\tilde{t}_1, \tilde{t}_2)} \left((u')^2 + \phi(t, u-w)\right) + \int_{\tilde{t}_1}^{\tilde{t}_2}\left( (\tilde{z}')^2 + \phi(t, \tilde{z}-w)\right) \\
	& <  \int_{[-T, T] \setminus (\tilde{t}_1, \tilde{t}_2)} \left((u')^2 + \phi(t, u-w)\right) + \int_{\tilde{t}_1}^{\tilde{t}_2}\left( (u')^2 + \phi(t, u-w)\right) \\
	& =  \mathscr{L}(u),         
	\end{align*}
	which contradicts the choice of  $u$ as a minimizer.  Hence $u$ is indeed convex on $(x_n-\alpha_n, x_n+\beta_n)$.
	
	It now follows that the graph of $u$ on $(x_n - \alpha_n, x_n +\beta_n)$ lies above the tangents to $w(x_n) + 2 \theta_n|g_n|$ at $(x_n-\alpha_n)$ and $(x_n+\beta_n)$:
	\begin{align*}
	u(t) & \geq w(x_n) + 2 \theta_n g(\beta_n) +  2\theta_n g'(\beta_n) (t - (x_n + \beta_n)),  
	\shortintertext{and}
	u(t) & \geq  w(x_n)+  2 \theta_n |g(-\alpha_n)| - 2\theta_n g'(-\alpha_n)(t - (x_n -\alpha_n)),  
	\end{align*}
	for $t \in (x_n - \alpha_n, x_n + \beta_n)$.
	For suppose the first fails, i.e.\ that for some $t_0 \in (x_n - \alpha_n, x_n +\beta_n)$ we have that
	\[
	u(t_0) <  w(x_n) + 2 \theta_n g(\beta_n) + 2 \theta_n g'(\beta_n)(t_0 - (x_n +\beta_n)).
	\]
	Then by convexity the graph of $u$ lies below the chord between the points $(t_0, u(t_0))$ and $(x_n + \beta_n, u(x_n + \beta_n))=(x_n +\beta_n, w(x_n) + 2 \theta_n g(\beta_n) )$, which has slope
	\[
	\frac{w(x_n) + 2 \theta_n g(\beta_n) - u(t_0)}{x_n + \beta_n - t_0}.
	\]
	By assumption 
	\[
	\frac{w(x_n) + 2 \theta_n g(\beta_n) - u(t_0)}{x_n + \beta_n - t_0} > 2 \theta_n g'(\beta_n),
	\]
	and so since $g'$ is continuous we have that
	\[
	2 \theta_n g_n'(t) < \frac{w(x_n) + 2 \theta_n g(\beta_n) - u(t_0)}{x_n + \beta_n - t_0}
	\]
	on some left neighbourhood of $x_n + \beta_n$.  So for $t$ in this neighbourhood, we have that
	\begin{align*}
	w(x_n) +  2 \theta_n g_n (t) 
	&= w(x_n) + 2  \theta_n g_n (x_n + \beta_n) - \int_t^{x_n + \beta_n} 2 \theta_n g_n'(s)\,ds  \\
	&> w(x_n) + 2 \theta_n g(\beta_n) - \int_t^{x_n + \beta_n} \frac{w(x_n) + 2 \theta_n g(\beta_n) - u(t_0)}{x_n + \beta_n - t_0} \, ds\\
	&= w(x_n) + 2  \theta_n g (\beta_n) - \frac{w(x_n) + 2 \theta_n g(\beta_n) - u(t_0)}{x_n + \beta_n - t_0}(x_n + \beta_n - t)\\
	& = u (x_n + \beta_n) - \frac{w(x_n) + 2 \theta_n g(\beta_n) - u(t_0)}{x_n + \beta_n - t_0}(x_n + \beta_n - t)\\
	&\geq u(t), 
	\end{align*}
	which is a contradiction for $t \in (x_n - \alpha_n, x_n + \beta_n)$.
	Similarly we can prove that $u$ lies above the other tangent.
	
	We can now prove certain bounds on $u'$.  Suppose that there exists $t_0 \in (x_n - \alpha_n, x_n + \beta_n )$ such that $u'(t_0) > 2 \theta_n g'(\beta_n)$.  Then we have that $u'(t) > 2 \theta_n g'(\beta_n)$ for all $t \in (t_0, x_n +\beta_n)$ by convexity.  Then we see, using the inequality proved in the previous paragraph,  that
	\begin{align*}
	u(x_n + \beta_n) 
	& = u(t_0) + \int_{t_0}^{x_n + \beta_n}u'(t) \,dt\\
	& >  w(x_n) + 2\theta_n g(\beta_n) + 2 \theta_n g'(\beta_n) (t_0 - (x_n + \beta_n)) + \int_{t_0}^{x_n +\beta_n} 2\theta_n g'(\beta_n)\, ds\\
	& =  w(x_n) + 2\theta_n g(\beta_n) + 2 \theta_n g'(\beta_n) (t_0 - (x_n + \beta_n))  \\
	& \phantom{=}  + ((x_n +\beta_n) - t_0)2\theta_n g'(\beta_n)\\
	& = w(x_n) + 2 \theta_n g(\beta_n),     
	\end{align*}
	which is a contradiction since $u(x_n + \beta_n) =  w(x_n) + 2 \theta_n g(\beta_n)$ by the choice of $\beta_n$.  So $u'(t) \leq 2 \theta_n g'(\beta_n)$ for almost every $t \in (x_n -\alpha_n, x_n + \beta_n)$.  Similarly we can prove that $u'(t) \geq -2 \theta_n g'(-\alpha_n)$ for almost every $t \in (x_n - \alpha_n, x_n + \beta_n)$. In the case in which $u(x_n) < w(x_n)$ we would prove that $	-2\theta_n g'(\beta_n) \leq u'(t) \leq 2 \theta_n g'(-\alpha_n)$ for almost every $t \in (x_n - \alpha_n, x_n + \beta_n)$.
	
	We now prove the important consequence~\eqref{vnbig} of these estimates.  Suppose that $b_n \geq a_n$.  
	Then using convexity of $u$, and, by monotonicity of $g$, the fact that $g(b_n) \geq  g(-a_n) = -g(a_n)$, we see that for $t \in J_n$,
	\begin{align*}
	u(t) 
	&\leq \frac{u(x_n + b_n) - u(x_n - a_n)}{b_n + a_n} (t- (x_n + b_n)) + u(x_n + b_n)\\
	& = \frac{3 \theta_n g (b_n) - 3 \theta_n g (a_n)}{b_n + a_n}(t- (x_n + b_n)) + w(x_n) + 3\theta_n g (b_n)\\
	& \leq w(x_n) + 3\theta_n g (b_n).
	\end{align*}
	Fix $t \in [x_n, x_n + b_n]$. We then have by the estimates we have just proved that
	\begin{align*}
	u(t) 
	& = u(x_n + b_n) - \int_t^{x_n + b_n} u'(s) \, ds \\
	& \geq w(x_n) + 3 \theta_n g (b_n)  - \int_t^{x_n + b_n} 2\theta_n g' (\beta_n)\, ds\\
	& = w(x_n) + 3\theta_n g (b_n) - 2((x_n + b_n) - t)\theta_n g'(\beta_n).
	\end{align*}
	Also, since $t \leq x_n + b_n$, we have, using~\ref{lipwn},~\ref{wfixxi}, and concavity of $g$,
	\begin{align*}
	w_n (t) 
	& \leq w(x_n) + 2 \theta_n g_n (t)\\
	& \leq  w(x_n) + 2 \theta_n g_n'(x_n + b_n) (t- (x_n + b_n)) + 2\theta_n g_n (x_n + b_n) \\
	& \leq  w(x_n) + 2 \theta_n g' (\beta_n)(t- (x_n + b_n)) + 2\theta_n g (b_n) .
	\end{align*}
	So  we have that 
	\begin{align*}
	u(t) - w_n (t) 
	& \geq  \left( w(x_n) + 3\theta_n  g (b_n) - 2((x_n + b_n) - t)\theta_n g' (\beta_n)\right)\\
	& \phantom{=} {}- \left( w(x_n) +   2 \theta_n g'(\beta_n) (t- (x_n + b_n)) + 2 \theta_n g (b_n)\right)\\
	& = \theta_n g(b_n).
	\end{align*}
	Similarly we can prove that $u(t) - w_n (t) \geq \theta_n g(a_n)$  for $t \in [x_n - a_n, x_n]$ if $a_n \geq b_n$.  In the case that $u(x_n) < w(x_n)$ we can prove in the same way that $u(t) - w_n (t) \leq - \theta_n g(b_n)$ on $[x_n, x_n + b_n]$ if $b_n \geq a_n$, or $u(t) - w_n(t) \leq - \theta_n g(a_n)$ on $[x_n - a_n, x_n]$ if $ a_n \geq b_n$,  hence the full result.
	
	The final statement of the lemma is proved using the techniques we  used above to prove convexity of $u$ on $(x_n - \alpha_n, x_n + \beta_n)$.  Suppose that there is a $t_0 \in (x_n + b_n, T)$ such that $u(t_0) >  w(x_n) + 3 \theta_n g_n (t_0)$. The argument on the left of $x_n$ is the same.  Defining affine $z \colon [-T, T]\to \mathbb{R}$ by
	\[
	z(t) = w(x_n) + 3 \theta_n g_n (t_0)+  3 \theta_n g_n'(t_0) (t-t_0),
	\]
	we see that $z(t_0 ) =  w(x_n) + 3 \theta_n g_n (t_0) < u(t_0)$, and, using the concavity of $g_n$, that $z \geq  w(x_n) + 3 \theta_n g_n$ on $(x_n, T)$.  The connected component of $[-T, T]$ containing $t_0$ of those points for which $z < u$ is a subinterval of $(x_n + b_n, T)$, since
	\[
	u(x_n + b_n) = w(x_n) + 3 \theta_n g_n (b_n) \leq z(x_n + b_n),
	\]
	and by~\ref{lipw}, 
	\[
	u(T) = w(T) \leq w(x_n) + 2 \theta_n g_n (T)  < z(T).
	\]
	So we have that $u(t) > z(t) \geq w(x_n) + 3 \theta_n g_n (t)$ on some open subinterval of $(x_n + b_n, T)$.  Hence we can perform the same trick as before, constructing a new function $\tilde{u} \in W^{1, 2}(-T, T)$ by replacing $u$ with $z$ on this subinterval, such that $\mathscr{L}(\tilde{u})  < \mathscr{L}(u)$, which again contradicts the choice of $u$ as a minimizer.
	\end{proof}
Thus we see that if for some $n \geq 0$, $u(x_n) \neq w(x_n)$, then $u$ must be Lipschitz on a neighbourhood of $x_n$, and its graph cannot escape the region bounded by the graphs of $t \mapsto w(x_n)\pm 3 \theta_n |g_n(t)|$ off this neighbourhood.  We note that the final statement of the lemma holds by the same argument even when $u(x_n) = w(x_n)$, and thus when the set $J_n$ introduced is empty.

For the remainder of the proof of minimality, we assume that $u(x_n) \neq w(x_n)$ for all $n \geq 0$.  If not one can just perform the argument in the proofs of lemma~\ref{ngood} and corollary~\ref{cor} on the connected components of $[-T, T] \setminus \overline{\{x_n : u(x_n) = w(x_n) \}}$.  We make  remarks in these proofs at those points where an additional argument is required in the general case.

For each $n \geq 0$ we now introduce some definitions and notation.  Let $a_n, b_n > 0$ be such that $J_n \df (x_n - a_n, x_n + b_n)$ 
is the connected component in $[-T,T]$ containing $x_n$ of those points $t$ such that $|u(t) - w(x_n)| > 3 \theta_n |g_n (t)|$, as in lemma~\ref{lemma1}. It will be easier to work on a symmetric interval around $x_n$, so let $c_n \df \max\{a_n, b_n\}$, and $\tilde{J}_n \df [x_n - c_n, x_n + c_n]$.  We note the following immediate corollary of lemma~\ref{lemma1}.  Fix $n \geq 0$.  For $ t \notin J_n$, we have for any $i \geq n$, by~\ref{wfixxi} and~\ref{lipwn}, that 
\begin{align} 
|(u - w_i)(t)| \leq |u(t) - w(x_n)| + |w(x_n) - w_i(t)|
& =  |u(t) - w (x_n)| + |w_i(x_n)- w_i (t)| \nonumber \\
& \leq 3 \theta_n |g_n(t)| + 2 \theta_n |g_n(t) |\nonumber\\
& = 5 \theta_n  |g_n (t)|. \label{vileq5offJn}
\end{align} 
The inequalities~\eqref{vnbig} from lemma~\ref{lemma1} tell us that the graph of a putative minimizer $u$ cannot get too close to that of $w$ around $x_n$.  In the next result, this lower bound on the distance between the two functions is shown to concentrate a certain amount of weight in the Lagrangian around each $x_n$.   The total weight is of course in general even larger---we took an infinite sum of such non-negative terms---but the important term is the $\tilde{\phi}_n$ term which deals precisely with the oscillations introduced by $w_n$ to get singularity of $w$ at $x_n$.
\begin{lemma}
	\label{intphibig} 
	Let $n \geq 0$, and suppose $\tilde{J}_n \subseteq Y_n$.  
	
	Then 
	\[
	\int_{\tilde{J}_n} \tilde{\phi}_n^1(t, u - w_n) \geq \frac{453\theta_n g(c_n)}{ (\one 1/c_n)^{1/3}}.
	\]
\end{lemma}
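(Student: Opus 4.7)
The plan is to exploit the pointwise lower bound $|(u - w_n)(t)| \geq \theta_n g(c_n)$ provided by~\eqref{vnbig} of lemma~\ref{lemma1}, together with the explicit piecewise formula for $\tilde{\phi}_n^1$. By symmetry I may assume $c_n = b_n \geq a_n$, in which case~\eqref{vnbig} gives the lower bound on $[x_n, x_n + c_n] \subseteq \tilde{J}_n$; the opposite case is handled identically on $[x_n - a_n, x_n]$. The piecewise definition of $\tilde{\phi}_n^1$ on $Y_n \times \mathbb{R}$ can be written compactly as $\tilde{\phi}_n^1(t, y) = \psi_n^1(t) \min\{|y|, 5\theta_n|g_n(t)|\}$, so the strategy is to restrict the integration to a sub-interval of $[x_n, x_n + c_n]$ on which both entries of this minimum are bounded below by $\theta_n g(c_n)$.

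Specifically, on the sub-interval $[x_n + c_n/4,\, x_n + c_n]$, monotonicity of $g$ and the elementary observation that $\log\log(4/c_n) \geq \log\log(1/c_n)$ give $|g_n(t)| = g(t - x_n) \geq g(c_n/4) = (c_n/4)\log\log(4/c_n) \geq g(c_n)/4$, whence $5\theta_n|g_n(t)| \geq \theta_n g(c_n)$. Combined with the lower bound from~\eqref{vnbig}, this yields $\tilde{\phi}_n^1(t, u - w_n) \geq \psi_n^1(t)\,\theta_n g(c_n)$ throughout this sub-interval, and so
\[
\int_{\tilde{J}_n} \tilde{\phi}_n^1(t, u - w_n)\,dt \geq 1812\, \theta_n g(c_n) \int_{c_n/4}^{c_n} \frac{ds}{s(\log 1/s)^{1/3}}.
\]

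Evaluating the remaining integral via the antiderivative $-\tfrac{3}{2}(\log 1/s)^{2/3}$ gives $\tfrac{3}{2}\bigl[(\log 4/c_n)^{2/3} - (\log 1/c_n)^{2/3}\bigr]$, and the mean value theorem applied to $x \mapsto x^{2/3}$ (whose derivative is decreasing) bounds this difference below by $\frac{2 \log 4}{3 (\log 4/c_n)^{1/3}}$. Since the smallness of $T$ in~\eqref{T0small} forces $c_n$ to be very small (so $\log 1/c_n$ is large), the ratio $(\log 4/c_n)/(\log 1/c_n) = 1 + (\log 4)/(\log 1/c_n)$ is close to $1$, and the overall constant $1812 \log 4 \approx 2512$ leaves a comfortable margin above the required $453$. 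The main obstacle is purely arithmetic: tracking the various numerical factors carefully enough to extract the stated constant $453$ from this slack.
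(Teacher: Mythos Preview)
Your approach is correct and genuinely different from the paper's. The paper integrates over the \emph{inner} sub-interval $[x_n, x_n + t_{c_n}]$, where $t_{c_n}$ is chosen so that $g(t_{c_n}) = g(c_n)/5$; on that interval the lower bound $|u-w_n| \geq \theta_n g(c_n) = 5\theta_n g(t_{c_n}) \geq 5\theta_n|g_n(t)|$ forces the \emph{outer} branch $\tilde{\phi}_n^1 = 5\theta_n g_n \psi_n^1$, and the integral is then bounded below by the area of a triangle using concavity of $t \mapsto g_n(t)\psi_n^1(t)$. This requires a somewhat delicate comparison of $\log 1/t_{c_n}$ with $\log 1/c_n$, and the constant $1812$ was chosen so that $1812/4 = 453$ drops out exactly. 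You instead integrate over the \emph{outer} sub-interval $[x_n + c_n/4, x_n + c_n]$, observe that both entries of the minimum are at least $\theta_n g(c_n)$ there, and evaluate $\int \psi_n^1$ explicitly via its antiderivative. Your route avoids both the implicit definition of $t_{c_n}$ and the concavity argument, at the cost of a slightly less clean constant; since $c_n < 2T < e^{-e^2}$ gives $\log 1/c_n > e^2$, the ratio $(\log 1/c_n)/(\log 4/c_n)$ exceeds $e^2/(e^2 + \log 4) > 0.84$, so its cube root exceeds $0.94$, and $1812 \cdot \log 4 \cdot 0.94 > 2300 \gg 453$. You should write out this last numerical check rather than leaving it as ``comfortable margin'', but the argument is complete once you do.
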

\begin{proof}
	Choose $t_{c_n} \in (0, c_n)$ such that $g(t_{c_n}) = g(c_n)/5$.  Noting that~\eqref{T0small} in particular implies that $t^{1/2} \two 1/ |t| \leq 1 \leq \two 1/ |t|$, we see that if $0 < t^{1/2} \leq c_n /5$, we have that
	\begin{align*}
	g(t) = t \two 1/|t| = t^{1/2} \left(t^{1/2} \two 1/|t| \right)\leq t^{1/2} \leq c_n / 5 
	& \leq (c_n \two 1/c_n )/5 \\
	& = g(c_n)/5,
	\end{align*}
	hence we have the lower bound $t_{c_n}^{1/2} \geq c_n / 5$, and thus the inequality 
	\[
	\one 1/c_n \geq \one 1/ (5 t_{c_n}^{1/2}) = (\one 1/25t_{c_n})/2.
	\]
	Since~\eqref{T0small} also in particular implies that $t_{c_n}^{1/2} \leq (g(c_n) / 5)^{1/2} \leq (1 / 5 \cdot 125)^{1/2} = 1/25$, we have that $1/(25t_{c_n}) \geq (1/t_{c_n})^{1/2}$ and hence that 
	\[
	\one 1/c_n \geq (\log 1/25t_{c_n})/2 \geq (\one (1/t_{c_n})^{1/2})/2 = (\one 1/t_{c_n})/4,
	\]
	the ultimate point being that 
	\begin{equation*}
	\frac{1}{(\one 1/c_n)^{1/3}} \leq \frac{4^{1/3}}{(\one 1/t_{c_n})^{1/3}} \leq \frac{2}{(\one 1/t_{c_n})^{1/3}}.
	\end{equation*}
	
	Suppose that $b_n \geq a_n$, so by definition $c_n = b_n$.  The case in which $a_n > b_n$ differs only in trivial notation. For $t \in [x_n, x_n + t_{c_n}]$ we have by~\eqref{vnbig}, the choice of $t_{c_n}$, and the monotonicity of $g$,  that $ |(u - w_n)(t)| \geq \theta_n g(c_n) = 5 \theta_n g(t_{c_n}) \geq 5 \theta_n g_n (t)$, hence by the definition of $\tilde{\phi}_n^1$ (noting our one assumption in the statement that $\tilde{J}_n \subseteq Y_n$),  
	$\tilde{\phi}_n^1 (t, u - w_n) = 5 \theta_n g_n (t) \psi_n^1 (t)$.  On the interval $[x_n, x_n + t_{c_n}]$ this function is concave, so the integral admits an easy lower estimate by calculating the area of the triangle under the graph, using the definitions of $t_{c_n}$ and $\psi^1$:
	\begin{align*} 
	\int_{\tilde{J}_n} \tilde{\phi}_n^1 (t, u - w_n) 
	\geq \int_{x_n}^{x_n + t_{c_n}} \tilde{\phi}_n^1(t, u - w_n) 
	&  = 5  \theta_n \int_{x_n}^{x_n + t_{c_n}} g_n(t) \psi_n^1 (t)\\ 
	& \geq \frac{5}{2} \theta_n g (t_{c_n}) \psi^1 (t_{c_n}) t_{c_n} \\
	& = \frac{\theta_n g(c_n)}{2}\frac{1812}{t_{c_n} ( \one 1/ t_{c_n})^{1/3}}t_{c_n}\\
	& \geq \frac{\theta_n g(c_n)}{4}\frac{1812}{ ( \one 1/ c_n)^{1/3}}\\
	& = \frac{ 453 \theta_n g(c_n)}{ (\one 1/c_n)^{1/3}}. 
	\qedhere
	\end{align*} 
\end{proof}
We shall want to give special attention to that part of $\tilde{J}_n$ on which $w_n = \tilde{w}_n$, so for $ n\geq 0$ define $H_n \subseteq [-T, T]$ by 
setting $H_n \df \tilde{J}_n \cap [x_n - \tau_n , x_n + \tau_n] = [x_n - d_n , x_n + d_n]$,  say, so $d_n \leq c_n$.  Note that by construction and~\ref{wn=twn},
\[
w_n(x_n \pm d_n) =  \tilde{w}_n (x_n \pm d_n) + \rho_n,\ \textrm{and}\ w_n'(x_n \pm d_n) =  \tilde{w}_n'(x_n \pm d_n).
\]
We cannot immediately mimic the main principle of the proof and  integrate by parts across $x_n$, since $w'_n$ does not exist at $x_n$.  This singularity is of course the whole point of the example.  The main trick of the proof was in making the oscillations of $w_n$ near $x_n$  slow enough so that we can now replace this function with a straight line on an interval containing $x_n$.  We can then use integration by parts on each side of this interval, and inside the interval exploit the fact that we have now introduced a function with constant derivative.  We incur an error in the boundary terms, of course, as we in general introduce discontinuities of the derivative where the line meets $\tilde{w}_n$, but the function $\tilde{w}_n$ moves slowly enough that this error can be dominated by the weight term in the Lagrangian (the role of $\psi_n^1$).

So let $\tilde{l}_n\colon [-T, T] \to \mathbb{R}$ be the affine function defined by
\[
\tilde{l}_n (t) = \tilde{l}_n' \cdot (t - (x_n - d_n)) + \tilde{w} (- d_n),
\]
where 
\begin{equation}
\label{ix}
\tilde{l}_n' \df \frac{\tilde{w} (d_n) - \tilde{w} (- d_n)}{2d_n} = (\two 1/d_n )(\sin \three 1/d_n),
\end{equation}
and define $l_n \colon [-T, T] \to \mathbb{R}$ by 
\[
l_n (t) = 
\begin{cases}
w_n(t) & t \notin H_n, \\  
\tilde{l}_n (t) + \rho_n & t \in H_n.
\end{cases}
\] 
Clearly $l_n \in W^{1,2}(-T, T)$.

We shall find the following notation useful, representing the boundary terms we get as a result of integrating by parts, firstly inside $H_n$, integrating $l_n' (u - w_n)'$, and secondly outside $H_n$, integrating $w_n' (u - w_n)'$:
\begin{gather*}
I_{n, \pm} =  l_n'\left(u (x_n \pm d_n)- w_n(x_n \pm d_n)\right),\\
E_{n, \pm} = w_n' (x_n \pm d_n) \left( u(x_n \pm d_n) - w_n(x_n \pm d_n)\right).
\end{gather*}
Note that 
\begin{align}
|I_{n,\pm} - E_{n,\pm} | &= \left|(l_n' - w_n'(x_n \pm d_n))\left(u(x_n \pm d_n) - w_n(x_n \pm d_n)\right)\right|.\label{pil}
\end{align}
The next lemma describes the consequence for the derivative terms in the integrand of exchanging $w_n$ with $l_n$ on $H_n$.  Integrating by parts gives us the  boundary terms involving $l_n'$, and the second derivative term vanishes, since $l_n$ is affine.  The $L^2$-norm of the difference between $w_n'$ and $l_n'$ gives us an error which we see, comparing with lemma~\ref{intphibig}, will be absorbed into the weight term of the integrand.  
\begin{lemma}
	\label{u'-wn'overHn}
	Let $n \geq0$.
	
	Then 
	\[
	\int_{H_n} \left((u')^2 - (w_n')^2\right) \geq 2(I_{n,+} - I_{n,-})  -  \frac{432g(d_n)}{(\one 1/d_n)^{1/3}}.
	\]
\end{lemma}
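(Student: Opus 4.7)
The plan is to compare $u$ directly against the affine surrogate $l_n$ of $w_n$ on $H_n$ via the elementary identity~\eqref{triv}, and then pay for the substitution of $l_n$ in place of $w_n$ using the slow oscillation of $\tilde{w}'$ that was engineered into the construction. First, I would apply~\eqref{triv} with $\bar{u}=u'$ and $\bar{w}=l_n'$: on $H_n$ the derivative $l_n'=\tilde{l}_n'$ is a constant, and~\ref{wn=twn} together with the definition of $\tilde{l}_n$ yield $l_n(x_n\pm d_n)=w_n(x_n\pm d_n)$. Integrating over $H_n$ and using the fundamental theorem of calculus, the cross-term collapses to a boundary term:
\[
\int_{H_n}\!\left((u')^2-(l_n')^2\right)\geq 2\tilde{l}_n'\bigl[(u-w_n)(x_n+d_n)-(u-w_n)(x_n-d_n)\bigr]=2(I_{n,+}-I_{n,-}).
\]
Rearranging, the lemma reduces to the \emph{one}-sided estimate
\[
\int_{H_n}\!\left((w_n')^2-(l_n')^2\right)\leq \frac{432\, g(d_n)}{(\log 1/d_n)^{1/3}}.
\]

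Second, since $w_n'=\tilde{w}'(\cdot-x_n)$ on $H_n$, and since by~\eqref{ix} and the fundamental theorem of calculus $\tilde{l}_n'$ equals the mean of $\tilde{w}'$ over $[-d_n,d_n]$, the constant $\tilde{l}_n'$ is the $L^2$-orthogonal projection of $\tilde{w}'|_{[-d_n,d_n]}$ onto constants. Hence the quantity to be bounded is the variance
\[
\int_{H_n}\!\left((w_n')^2-(l_n')^2\right)=\int_{-d_n}^{d_n}\!\left(\tilde{w}'(t)-\tilde{l}_n'\right)^{\!2}dt.
\]
This cancellation is crucial: the naive bound $\int_{-d_n}^{d_n}(\tilde{w}')^2\lesssim d_n(\log\log 1/d_n)^2$ coming from~\eqref{|tw'|} is far too weak.

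Third, I would extract the variance from the explicit form~\eqref{lamda}. Writing $F(r)=\log\log 1/r$ and $S(r)=\log\log\log 1/r$, the dominant part of $\tilde{w}'(t)$ is $F(|t|)\sin S(|t|)$, and a direct differentiation yields $\frac{d}{dr}(F\sin S)(r)=-(\sin S+\cos S)/(r\log 1/r)$; integrating from $|t|$ up to $d_n$ gives the pointwise bound $|F(|t|)\sin S(|t|)-F(d_n)\sin S(d_n)|\leq 2(F(|t|)-F(d_n))$. A change of variable $z=\log 1/r$ and the elementary inequality $\log(1+x)\leq x$ then bound $\int_0^{d_n}(F(r)-F(d_n))^2\,dr$ by $2d_n/(\log 1/d_n)^2$, while the residual term $e^{-F}(\sin S+\cos S)$ contributes at most $O(d_n/(\log 1/d_n)^2)$. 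Combining gives $\int_{-d_n}^{d_n}(\tilde{w}'-\tilde{l}_n')^2\,dt\leq C d_n/(\log 1/d_n)^2$ for an absolute constant $C$, which beats $432\,g(d_n)/(\log 1/d_n)^{1/3}=432\, d_n\log\log 1/d_n/(\log 1/d_n)^{1/3}$ with a margin of order $(\log 1/d_n)^{5/3}\log\log 1/d_n$. The principal obstacle is this variance estimate: the constant $\tilde{l}_n'$ must soak up the growing $(\log\log 1/d_n)$-sized part of $\tilde{w}'$, which is precisely the quantitative form of the slow oscillation arranged through~\eqref{wn''cont}.
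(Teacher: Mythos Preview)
Your argument is correct and takes a genuinely different route from the paper for the error term. Both proofs obtain the boundary contribution $2(I_{n,+}-I_{n,-})$ identically, via~\eqref{triv} with $\bar u=u'$, $\bar w=l_n'$ and the fact that $l_n$ agrees with $w_n$ at $x_n\pm d_n$. The divergence is in how the substitution error $\int_{H_n}\bigl((w_n')^2-(l_n')^2\bigr)$ is controlled.

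The paper bounds $\int_{H_n}\bigl|(\tilde l')^2-(\tilde w')^2\bigr|$ via the factorization $|a^2-b^2|=|a-b||a+b|$, splits the domain at the threshold $\gamma(d)=d/(\log 1/d)^{2/3}$, uses the pointwise bound $|\tilde l'-\tilde w'|\leq 2/(\log 1/d)^{1/3}$ on $[\gamma(d),d]$ together with Cauchy--Schwarz, and pays for the inner interval $[0,\gamma(d)]$ by a crude $(4\log\log 1/t)^2$ bound plus a separate integration-by-parts computation of $\int_0^d(\log\log 1/t)^2\,dt$. You instead observe that $\tilde l_n'$ is the mean of $\tilde w'$ over $[-d_n,d_n]$, so by orthogonality $\int\bigl((\tilde w')^2-(\tilde l_n')^2\bigr)=\int(\tilde w'-\tilde l_n')^2$, and then estimate this variance directly from~\eqref{lamda} using $|(F\sin S)'|\leq 2|F'|$ and the substitution $z=\log 1/r$. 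Your route is shorter, avoids the domain split and the auxiliary $\int(\log\log 1/t)^2$ computation, and in fact yields the sharper order $d_n/(\log 1/d_n)^2$, which beats the stated $432\,g(d_n)/(\log 1/d_n)^{1/3}$ by a factor of $(\log 1/d_n)^{5/3}\log\log 1/d_n$. The paper's approach has the minor advantage that it never invokes the orthogonality identity and so does not rely on $\tilde l_n'$ being exactly the mean, only on the pointwise closeness~\eqref{l'-w'}; but since~\eqref{ix} makes the mean property explicit, your exploitation of it is entirely legitimate.
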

\begin{proof} 
	We want to use the following estimate, replacing $w_n$ with the line $l_n$ and estimating the error:
	\begin{align*}
	\int_{H_n} \left((u')^2 - (w_n')^2 \right)
	& =  \int_{H_n} \left((u')^2 - (l_n')^2 \right)+ \int_{H_n}\left((l_n')^2 - (w_n')^2 \right)	\\
	& \geq  \int_{H_n} \left( (u')^2 - (l_n')^2 \right) - \int_{H_n} |(l_n')^2 - (w_n')^2| .
	\end{align*}
	Since $w_n' = \tilde{w}_n'$ and $l_n'= \tilde{l}_n'$ on $H_n$, we can just estimate this term in the case $n=0$; the case of general $n$ is just a translation of this base case.  We drop the index 0 from the notation.
	
	Observe for $t > 0$ that
	\[
	\frac{d}{dt}\left((\two 1/t )(\sin \three 1/t)\right) = -\frac{\sin \three 1/t + \cos \three 1/t}{t \one 1/t},
	\]
	so 
	\[
	\left| \frac{d}{dt}\left((\two 1/t)( \sin \three 1/t)\right)\right| \leq \frac{2}{t \one 1/t}.
	\]
	Hence, recalling the expressions for the derivatives given in~\eqref{ix} and~\eqref{lamda}, and by applying the mean value theorem, we can see that for $0 < t \leq d$,
	\begin{align}
	\lefteqn{|\tilde{l}' - \tilde{w}'(t)|}\nonumber\\ 
	& = \bigg| (\two 1/d )(\sin \three 1/d)  \nonumber\\
	& \phantom{=}{}- \left( (\two 1/t)( \sin \three 1/t) - \frac{\sin \three 1/t + \cos \three 1/t}{\one 1/t}\right) \bigg| \nonumber\\
	& \leq \left|(\two 1/d)( \sin \three 1/d) - (\two 1/t)( \sin \three 1/t)\right| + \frac{2}{\one 1/t} \nonumber \\
	& \leq \frac{2(d- t )}{t \one 1/t} + \frac{2}{\one1/t} \nonumber \\
	&= \frac{2d}{t \one 1/t}.\label{l'-w'}
	\end{align}
	Now, let 
	\begin{equation}
	\label{gammac}
	\gamma (d) \df \frac{d}{(\one 1/d)^{2/3}} \leq d,
	\end{equation}
	so $\one 1/\gamma(d) = \one \left(\frac{(\one 1/d)^{2/3}}{d}\right) = \frac{2}{3} \two 1/d + \one 1/d \leq 2 \one 1/d$, and so we have that
	\begin{equation}
	\label{lglggamma}
	\two 1/\gamma(d) \leq \one (2 \one 1/d) \leq \one (\one1/d)^2  = 2 \two 1/d.
	\end{equation}
	For $ t \in [ \gamma (d) , d]$, we have by~\eqref{l'-w'} and the definition of $\gamma(d)$ that
	\begin{equation}
	\label{|l'-w'|}
	|\tilde{l}' - \tilde{w}'(t)| \leq \frac{2d}{t \one 1/t} \leq \frac{2d}{\gamma(d) \one 1/d} 
	= \frac{ 2 (\one 1/d)^{2/3}}{\one 1/d} 
	= \frac{2}{(\one 1/d)^{1/3}}.
	\end{equation}
	This is one sense in which $\tilde{w}$ oscillates slowly enough: a good estimate for the discrepancy between the derivatives holds on an interval in the domain of integration large enough in measure. Noting that 
	\begin{align*}
	\lefteqn{|\tilde{l}' \pm \tilde{w}'(t)|}\\
	& = \Bigg| (\two 1/d)( \sin \three 1/d) \nonumber\\
	& \phantom{=} {}\pm \left( (\two 1/t )(\sin \three 1/t) - \frac{\sin \three 1/t + \cos \three 1/t}{\one 1/t}\right)\Bigg| \nonumber\\
	& \leq | (\two 1/d)( \sin \three 1/d )| + | (\two 1/ t)( \sin \three 1/ t)| \nonumber \\
	& \phantom{=} {}+ \left| \frac{\sin \three 1/t + \cos \three 1/t}{\one 1/t} \right| \nonumber \\
	& \leq \two 1/d + \two 1/t + \frac{2}{\one1/t}\nonumber\\
	& \leq 4 \two 1/t,
	\end{align*}
	we see, since the integrand is an even function, that
	\begin{align}
	\int_H \left|(\tilde{l}')^2 - (\tilde{w}')^2\right| 
	& = \int_{-d}^d |\tilde{l}' - \tilde{w}'||\tilde{l}' + \tilde{w}'| \nonumber\\
	& = 2 \int_0^d  |\tilde{l}' - \tilde{w}'||\tilde{l}' + \tilde{w}'| \nonumber \\
	& = 2 \left( \int_0^{\gamma(d)} |\tilde{l}' - \tilde{w}'||\tilde{l}' + \tilde{w}'| + \int_{\gamma(d)}^d |\tilde{l}' - \tilde{w}'||\tilde{l}' + \tilde{w}'|\right)\nonumber \\
	& \leq 2 \left( \int_0^{\gamma(d)} (4 \two 1/t)^2  + \int_{\gamma(d)}^d |\tilde{l}' - \tilde{w}'||\tilde{l}' + \tilde{w}'|\right).\label{extra}
	\end{align}
	We then use Cauchy-Schwartz and~\eqref{|l'-w'|} to see that 
	\begin{align*} 
	\int_{\gamma(d)}^d |\tilde{l}' - \tilde{w}'||\tilde{l}' + \tilde{w}'|
	& \leq  \left( \int_{\gamma(d)}^d |\tilde{l}' - \tilde{w}'|^2 \right)^{1/2} \left( \int_{\gamma(d)}^d |\tilde{l}' + \tilde{w}'|^2\right)^{1/2} 	\\
	& \leq \left( \int_{\gamma(d)}^d \left(\frac{2}{(\one1/d)^{1/3}}\right)^2 \right)^{1/2} \left( \int_{\gamma(d)}^d (4 \two 1/t)^2 \right)^{1/2} \\
	& \leq \frac{8 d^{1/2}}{(\one 1/d)^{1/3}} \left( \int_0^d (\two 1/t)^2 \right)^{1/2}.
	\end{align*}
	We now use repeated applications of integration by parts to derive the inequality
	\begin{equation*}
	\int_0^d (\two 1/t)^2 \, dt \leq 3d ( \two 1/d)^2.
	\end{equation*}
	First note, using the substitution $y = \one 1/t$, and integrating by parts, that
	\begin{align*}
	\int_0^d (\two 1/ t)^2 \, dt 
	& =  \int_{\one 1/d}^{\infty} (\one y)^2 e^{-y}\, dy \\
	& =  \left( [-e^{-y} (\one y)^2 ]_{\one 1/d}^{\infty} + \int_{\one 1/d}^{\infty} \frac{2 (\one y ) e^{-y}}{y} \, dy \right)\\
	& = d ( \two 1/d)^2 + \int_{\one 1/d}^{\infty}\frac{ 2 (\one y ) e^{-y}}{y} \, dy.
	\end{align*}
	Examining the second summand, we use Cauchy-Schwartz, and integration by parts twice more to see, using the simplifications that $\one 1/d \geq \two  1/d \geq 2 \geq 1$, that
	\begin{align*}
	\lefteqn{\int_{\one 1/d}^{\infty} \frac{ (\one y ) e^{-y}}{y} \, dy}\\ 
	& \leq \left( \int_{\one 1/d}^{\infty} e^{-2y}\, dy\right)^{ \! 1/2} \left( \int_{\one 1/d}^{\infty} \frac{( \one y)^2 }{y^2} \,dy\right)^{\! 1/2} \\
	& =  \left( \left[\frac{- e^{-2y}}{2}\right]_{\one 1/ d}^{\infty}\right)^{\! 1/2} \left( \left[\frac{-(\one y)^2}{y}\right]_{\one 1/d}^{\infty} - \int_{\one 1/d}^{\infty} \frac{-2 \one y}{y^2}\, dy \right)^{\! 1/2}\\
	& \leq 2^{-1/2}d \left( \frac{(\two 1/d)^2}{\one 1/d} - \left( \left[\frac{2 \one y}{y}\right]_{\one 1/d}^{\infty} - \int_{\one 1/d}^{\infty} \frac{2}{y^2}\, dy\right)\right)^{\! 1/2}\\
	& = 2^{-1/2}d \left( \frac{(\two 1/d)^2}{\one 1/d} - \left( \frac{-2 \two 1/d}{\one 1/d} - \left[\frac{-2}{y}\right]_{\one 1/d}^{\infty}\right)\right)^{\! 1/2}\\
	& = 2^{-1/2}d \left( \frac{(\two 1/d)^2}{\one 1/d} + \frac{2 \two 1/d}{\one 1/d} + \frac{2}{\one 1/ d}\right)^{\! 1/2}\\
	& = \frac{2^{-1/2}d}{(\one 1/ d)^{1/2}}(( \two 1/d + 1)^2 + 1)^{1/2} \\
	& \leq \frac{2^{-1/2}d}{(\one 1/ d)^{1/2}}2^{1/2} (\two 1/ d + 1)\\
	& \leq \frac{2^{-1/2}d}{(\one 1/ d)^{1/2}} 2^{3/2} \two 1/d\\
	& \leq 2 \two 1/d.
	\end{align*}
	Combining with the original expression, we have, using again that $\two 1/d \geq 2$, that
	\begin{align*}
	\int_0^d (\two 1/ t)^2 \, dt 
	& \leq d( \two 1/d) \left( (\two 1/d) + 4\right) \\
	& \leq 3 d (\two 1/d)^2,
	\end{align*}
	as claimed.
	
	So we can conclude our estimates. Since~\eqref{T0small} implies that $\two 1/d \leq ( \one 1 /d)^{1/3}$, applying this inequality to~\eqref{extra}, using~\eqref{gammac}, and~\eqref{lglggamma}, we see that
	\begin{align} 
	\int_H |(\tilde{l}')^2 - (\tilde{w}')^2| & \leq 2 \left( 48 \gamma(d) (\two 1/\gamma(d))^2 + \frac{24d \two 1/d}{(\one 1/d)^{1/3}}\right)\nonumber \\
	& \leq  \frac{384 d (\two 1/d )^2}{(\one 1/d)^{2/3}} + \frac{48d \two 1/d}{(\one 1/d)^{1/3}}\nonumber \\
	& = \frac{g(d)}{(\one 1/d)^{1/3}} \left( \frac{384 \two 1/d}{(\one 1/d)^{1/3} } + 48 \right)\nonumber \\
	& \leq \frac{432 g(d)}{(\one1/d)^{1/3}}. 
	\label{ln'2-wn'2}
	\end{align}
	By~\eqref{triv} we have, since $\tilde{l}(\pm d) = \tilde{w}(\pm d)$,  that 
	\begin{align*}
	\int_{H} \left((u')^2 - (\tilde{l}')^2\right)  \geq  \int_{H} 2\tilde{l}'(u' -\tilde{l}')  =  2\tilde{l}' \int_{H}( u' - \tilde{l}')  =  2\tilde{l}' [ u - \tilde{l}]_{-d}^d 
	& =  2   \tilde{l}' [u -\tilde{w}]_{ - d}^{ d} \\
	& =  2(I_{+} - I_{-}).
	\end{align*}
	Since  
	\begin{align*}
	\int_{H} \left((u')^2 - (\tilde{w}')^2  \right)
	& = \int_{H} \big((u')^2 - (\tilde{l}')^2 \big)+ \int_{H}\big((\tilde{l}')^2 - (\tilde{w}')^2 \big)\\
	& \geq  \int_{H} \big( (u')^2 - (\tilde{l}')^2 \big) - \int_{H} |(\tilde{l}')^2 - (\tilde{w}')^2|, 
	\end{align*}
	the result follows from~\eqref{ln'2-wn'2}.
\end{proof}
An estimate established in the preceding proof gives easily the following important result.  The errors we incur in our boundary terms by introducing a jump discontinuity in the derivative of our new function $l_n$ are sufficiently small; they can be controlled by the integral over $H_n = [x_n - d_n, x_n + d_n]$ of a continuous function in $c_n \geq d_n$ taking value $0$ at $x_n$.
\begin{lemma}
	\label{partserror} 
	Let $n \geq 0$.
	
	Then 
	\[
	|I_{n,+} - E_{n,+}| + |I_{n, -} - E_{n, -}| \leq \frac{20\theta_n g(c_n)}{\one 1/ c_n}.
	\]
\end{lemma}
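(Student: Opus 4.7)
The plan is to bound each summand $|I_{n,\pm} - E_{n,\pm}|$ via the factorisation already recorded in~\eqref{pil}: it is the product of the slope discrepancy $|l_n' - w_n'(x_n\pm d_n)| = |\tilde l_n' - \tilde w_n'(\pm d_n)|$ (since on $H_n$ the functions $l_n$ and $w_n$ reduce to their untranslated counterparts) and the value discrepancy $|u(x_n\pm d_n) - w_n(x_n\pm d_n)|$. I will control each factor separately and then multiply.

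For the slope factor, I would invoke estimate~\eqref{l'-w'} from the proof of \cref{u'-wn'overHn}, which states that $|\tilde l' - \tilde w'(t)| \leq 2d/(t\log 1/t)$ for $0 < t \leq d$. Evaluating this at $t = d = d_n$ gives $|\tilde l_n' - \tilde w_n'(x_n + d_n)| \leq 2/\log 1/d_n$; the same bound on the left follows at once because $\tilde w'$ is even, so $\tilde w_n'(x_n - d_n) = \tilde w_n'(x_n + d_n)$. Since $d_n \leq c_n$ and $c_n \leq T<1$, this yields the uniform estimate $|\tilde l_n' - \tilde w_n'(x_n \pm d_n)| \leq 2/\log 1/c_n$.

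For the value factor, I would show that $|u(t) - w_n(t)| \leq 5\theta_n g(c_n)$ for every $t \in \tilde J_n$. By~\ref{lipwn} and~\ref{wfixxi} we already have $|w_n(t) - w(x_n)| \leq 2\theta_n|g_n(t)| \leq 2\theta_n g(c_n)$, so it suffices to prove $|u(t) - w(x_n)| \leq 3\theta_n g(c_n)$ on $\tilde J_n$. For $t \in \tilde J_n \setminus J_n$ this is the final assertion of \cref{lemma1} combined with $|g_n(t)| \leq g(c_n)$. For $t \in J_n \subseteq (x_n - \alpha_n, x_n + \beta_n)$ I would use the convexity (respectively concavity, in the case $u(x_n) < w(x_n)$) of $u$ on $(x_n - \alpha_n, x_n + \beta_n)$ established in \cref{lemma1}: $u - w(x_n)$ has constant sign on the connected set $J_n$ (as it cannot vanish there by the defining inequality), and by the one-sided convexity it is bounded in modulus on $J_n$ by the maximum of its values at the endpoints $x_n - a_n, x_n + b_n$, namely $3\theta_n g(a_n)$ and $3\theta_n g(b_n)$, both $\leq 3\theta_n g(c_n)$.

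Finally, combining the two factor bounds and summing the two endpoint contributions gives
\[
|I_{n,+} - E_{n,+}| + |I_{n,-} - E_{n,-}| \leq 2 \cdot \frac{2}{\log 1/c_n} \cdot 5\theta_n g(c_n) = \frac{20\theta_n g(c_n)}{\log 1/c_n},
\]
as required. The only mildly delicate step is the value-factor bound inside $J_n$: one must confirm that the connected convex/concave region from \cref{lemma1} actually contains $J_n$ (which it does by construction, as $a_n \leq \alpha_n$ and $b_n \leq \beta_n$) and that constancy of sign on $J_n$ allows one to bound $|u - w(x_n)|$ from above by its endpoint values despite the interval $\tilde J_n$ being potentially larger than $(x_n - \alpha_n, x_n + \beta_n)$.
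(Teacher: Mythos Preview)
Your proposal is correct and follows essentially the same route as the paper: both arguments use the factorisation~\eqref{pil}, bound the slope discrepancy via~\eqref{l'-w'} evaluated at $t=d_n$ (together with $d_n\le c_n$), and bound the value discrepancy $|u(x_n\pm d_n)-w_n(x_n\pm d_n)|$ by $5\theta_n g(c_n)$ using the convexity/concavity from \cref{lemma1} inside $J_n$ and the final assertion of \cref{lemma1} outside $J_n$. The only difference is presentational: the paper checks the value bound at the two specific points $x_n\pm d_n$ via a direct case split, whereas you establish it uniformly on all of $\tilde J_n$.
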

\begin{proof} 
	We just have to estimate $|(u - w_n) (x_n \pm d_n)|$.  Suppose that  $u(x_n) > w(x_n)$; the argument for the case in which $u(x_n)<w(x_n)$ is similar.  Suppose also that $b_n \geq a_n$, so by definition $c_n = b_n$.  The case in which $a_n > b_n$ is similar.  Then $u(t) \leq u(x_n + b_n)$ by the convexity of $u$ established in lemma~\ref{lemma1}, for all $t \in J_n$.  
	
	If $x_n - d_n \notin J_n$, then~\eqref{vileq5offJn} implies that $| ( u -w_n) ( x_n - d_n)| \leq 5 \theta_ng( d_n) \leq 5 \theta_n g(b_n)$, by monotonicity of $g$, since $d_n \leq b_n$.   
	
	Certainly $x_n + d_n \in J_n$, since $d_n \leq b_n$ by definition of $d_n$, so if also $x_n - d_n \in J_n$, by definition of $J_n$ we see that
	\[
	w(x_n) \leq w(x_n) + 3 \theta_n |g (d_n)| \leq u ( x_n \pm d_n )  \leq u ( x_n + b_n)  = w(x_n) + 3 \theta_n g ( b_n),
	\]
	so $0 < u ( x_n \pm d_n ) - w(x_n) \leq 3 \theta_n g (b_n)$. Hence, using~\ref{wfixxi} and~\ref{lipwn}, we have that, since $d_n \leq b_n$,
	\begin{align*}
	|(u-w_n)(x_n \pm d_n)| 
	& \leq |u (x_n \pm d_n) - w(x_n)| + |w_n (x_n) - w_n ( x_n \pm d_n)| \\
	&  \leq 3 \theta_n g(b_n) + 2 \theta_n g(d_n) \\
	& \leq 5 \theta_n g (b_n).
	\end{align*}
	
	Hence in both cases $|(u - w_n)(x_n \pm d_n)| \leq 5 \theta_n g(b_n)$. The result then follows by using~\eqref{l'-w'} with $t = d$ in~\eqref{pil}, and since $d_n \leq b_n$.
\end{proof}
The following is the key lemma, providing a positive lower bound for $\mathscr{L}_n (u) - \mathscr{L}_n(w_n)$.   We combine our estimates for $\mathscr{L}_n$ across the whole domain $[-T, T]$, integrating by parts off $\bigcup_{i = 1}^{n} H_i$, and using the estimates from lemmas~\ref{u'-wn'overHn} and~\ref{partserror} on each $H_i$. The argument is made more straightforward by assuming that the intervals $\tilde{J}_i$ are small in a certain sense, which implies that the intervals on which we work do not overlap.  Should this assumption fail for some $n$, then, as later lemmas will show, this means that the discrepancy $u-w$ around $x_n$ is sufficiently large that we may ignore the fine detail of our construction at and beyond the stage $n$, and we can conclude the proof using just $\mathscr{L}_{n-1}$.
\begin{lemma}
	\label{ngood} 
	Suppose  $n \geq 0$ is such that for all $0 \leq j \leq n$, 
	\begin{gather}
	\tilde{J}_k \cap Y_j = \emptyset\ \textrm{for all $ 0 \leq k \leq  j-1 $; and} \label{mu}\\
	\tilde{J}_j \subseteq Y_j. \label{xi}
	\end{gather}
	Then 
	\[
	\mathscr{L}_n (u) - \mathscr{L}_n (w_n) \geq  \sum_{i=0}^n \left(\frac{\theta_i g(c_i)}{(\one1/c_i)^{1/3}}\right)  + \int_{[-T, T] \setminus \bigcup_{i=0}^n H_i} |u - w_n|.
	\]
\end{lemma}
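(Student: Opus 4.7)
The plan is to implement simultaneously at all the singular points $x_0, \dots, x_n$ the integration-by-parts argument sketched at the beginning of the section. Setting $R := [-T, T] \setminus \bigcup_{i=0}^n H_i$ and recalling that $\phi(t, 0) = 0$ by~\eqref{phi0is0},
\[
\mathscr{L}_n (u) - \mathscr{L}_n(w_n) = \int_{-T}^T \bigl( (u')^2 - (w_n')^2 \bigr) + \int_{-T}^T \phi(t, u - w_n).
\]
Both terms will be estimated over the decomposition $[-T,T] = \bigcup_{i=0}^n H_i \cup R$; hypothesis~\eqref{xi} places each $\tilde{J}_i \supseteq H_i$ in $Y_i$, and together with~\eqref{mu} this forces the $\tilde{J}_i$ (hence the $H_i$) to be pairwise disjoint.

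For the kinetic part, Lemma~\ref{u'-wn'overHn} directly provides on each $H_i$ the lower bound $2(I_{i,+} - I_{i,-}) - 432 g(d_i)/(\log 1/d_i)^{1/3}$. On each component of $R$ I would apply~\eqref{triv} to get $(u')^2 - (w_n')^2 \geq 2 w_n'(u' - w_n')$ and then integrate by parts; $w_n''$ exists a.e.\ on $R$ by Lemma~\ref{wnlemma}, and the boundary terms at $\pm T$ vanish since $u(\pm T) = w(\pm T) = w_n(\pm T)$. Summing the boundary contributions over the components of $R$ yields $2\sum_i(E_{i,-} - E_{i,+})$, so combining with the $H_i$ estimate the boundary terms telescope into $2\sum_i[(I_{i,+} - E_{i,+}) - (I_{i,-} - E_{i,-})]$, each of modulus at most $40\theta_i g(c_i)/\log 1/c_i$ by Lemma~\ref{partserror}. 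The interior contribution from the integration by parts is $-2\int_R (u-w_n)w_n''$, handled below.

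For the potential term I would split $\phi = \phi^1 + \phi^2$. Hypothesis~\eqref{xi} lets me apply Lemma~\ref{intphibig} to $\phi^1$ on each $\tilde{J}_i$, and by disjointness
\[
\int_{-T}^T \phi^1(t, u-w_n) \geq \sum_{i=0}^n \int_{\tilde{J}_i}\tilde{\phi}_i^1(t, u-w_n) \geq \sum_{i=0}^n \frac{453\theta_i g(c_i)}{(\log 1/c_i)^{1/3}}.
\]
A straightforward comparison (using $d_i \leq c_i$, $\theta_i \geq 1$, and $(\log 1/c_i)^{-1} \leq (\log 1/c_i)^{-1/3}$) shows this dominates the kinetic error terms, leaving a residue of at least $\theta_i g(c_i)/(\log 1/c_i)^{1/3}$ for each $i$, as required.

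The main obstacle I foresee is absorbing the remaining term $2\int_R |u-w_n|\,|w_n''|$ into $\int_R \phi^2(t, u-w_n)$ while retaining $\int_R |u-w_n|$. The definition $\psi^2 = 3 + 2|\tilde{w}''|$ is designed for precisely this: on each subregion of $R$ meeting some $(x_j - \tau_j, x_j + \tau_j)$ one has, through~\ref{wn=twn} and iteration of~\ref{wn''-wn-1''}, that $w_n''$ coincides with $\tilde{w}_j''$ up to small corrections, while the last clause of Lemma~\ref{lemma1} combined with~\ref{lipwn} forces $|u - w_n| \leq 5\theta_j |g_j|$, so $\tilde{\phi}_j^2(t, u-w_n)$ sits in its linear regime $(3 + 2|\tilde{w}_j''|)|u-w_n|$, absorbing the error with surplus $3|u-w_n|$. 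Off all the $\tau$-cores the bound $|w_n''| \leq \sum_{i=0}^n |\tilde{w}_i''| + 1$ from~\eqref{Kn} and iteration of~\ref{wn''-wn-1''} permits the same absorption against whichever $\tilde{\phi}_i^2$ dominates on each subregion. Careful book-keeping of these local absorptions across all subregions of $R$ constitutes the bulk of the work, and once complete, summing with the kinetic estimate yields the claimed bound.
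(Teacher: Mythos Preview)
Your proposal is correct and follows the paper's approach: the same decomposition $[-T,T]=\bigcup_i H_i\cup R$, the same invocation of Lemmas~\ref{u'-wn'overHn}, \ref{intphibig}, and~\ref{partserror}, and the same integration-by-parts on $R$ with the $\phi^2$ term absorbing $2|w_n''|\,|u-w_n|$. The paper organizes that last absorption via the inductive inequality $\sum_{j\in\mathcal{I}_n(t)}\psi_j^2(t)\geq 2|w_n''(t)|+1+2^{-(n-1)}$, summing over \emph{all} indices $j$ with $t\in Y_j$ rather than selecting one dominant $\tilde{\phi}_j^2$; this is the precise form of the book-keeping you anticipate.
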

\begin{proof} 
	By~\ref{wn=wn-1} and assumption~\eqref{mu} we have for all $ 0 \leq k \leq  j -1 \leq j \leq n$ that $w_j = w_k$ on $\tilde{J}_k$, in particular that
	\begin{equation}
	\label{wn=wk}
	w_n = w_k,\ w_n' = w_k',\ \textrm{and}\ w_n'' = w_k''\ \textrm{on $\tilde{J}_k$, whenever both sides exist}.
	\end{equation}
	Also, assumptions~\eqref{xi} and~\eqref{mu} together imply that $\{\tilde{J}_i\}_{i=0}^n $ is pairwise disjoint.
	
	Now, let $0 \leq i \leq n$.    We split up the integral into summands which we shall tackle separately:
	\begin{align*}
	\lefteqn{
		\int_{\tilde{J}_i} \left((u')^2 + \phi(t, u-w_i) - (w_i')^2\right)
	}\\
	& =  \int_{\tilde{J}_i} \left(\phi^1 (t, u-w_i) + \phi^2 (t, u-w_i) \right) +  \int_{H_i} \big((u')^2  - (w_i')^2\big)+ \int_{\tilde{J}_i \backslash H_i} \big((u')^2  - (w_i')^2\big)  \\
	& \geq  \int_{\tilde{J}_i} \phi^1(t, u-w_i) + \int_{H_i} \big((u')^2  - (w_i')^2\big) + \int_{\tilde{J}_i \backslash H_i} \left(\phi^2(t, u-w_i) +(u')^2  - (w_i')^2\right).
	\end{align*}
	Now, by lemma~\ref{intphibig} (note that this applies by assumption~\eqref{xi}) and lemma~\ref{u'-wn'overHn}, and since $c_i \geq d_i$ and $\theta_i \geq 1$, 
	\begin{align*} 
	\int_{\tilde{J}_i} \phi^1 (t, u-w_i) + \int_{H_i} \left((u')^2 - (w_i')^2\right)
	& \geq  \int_{\tilde{J}_i} \tilde{\phi}_i^1(t, u-w_i) + \int_{H_i} \left((u')^2 - (w_i')^2\right) \\
	& \geq \frac{ 453 \theta_i g(c_i)}{( \one 1/c_i)^{1/3}} + 2(I_{i,+} - I_{i, -}) - \frac{432 g(d_i)}{(\one 1/ d_i)^{1/3}}\\
	& \geq  \frac{21 \theta_i g(c_i)}{(\one 1/c_i)^{1/3}} + 2(I_{i,+} - I_{i, -}).
	\end{align*}
	So, combining, we have that
	\begin{align}
	\int_{\tilde{J}_i}\left( (u')^2 + \phi(t, u-w_i) - (w_i')^2 \right)
	& \geq   \frac{21 \theta g(c_i)}{(\one 1/c_i)^{1/3}} + 2(I_{i,+} - I_{i, -}) \nonumber\\
	& \phantom{=} {}+ \int_{\tilde{J}_i \backslash H_i} \left(\phi^2(t, u-w_i) + (u')^2  - (w_i')^2\right).\label{Jicomplete2}
	\end{align}
	
	Now, for any $t \in [-T, T]$, let $\mathcal{I}_n (t) \df \{ j = 0, \ldots, n : t \in Y_j\}$.
	We now show by an easy induction on $n$ that 
	\begin{equation}
	\label{omicron}
	\sum_{j \in \mathcal{I}_n(t)}\psi_j^2(t) \geq 2|w_n''(t)| + 1 + 2^{-(n-1)},
	\end{equation}
	for almost every $ t \in [-T, T]$.  For $n = 0$, we have by definition of $\psi^2$ that for all $ t \neq x_0$, $\psi_0^2 (t) =  3 +  2|w_0''(t)|$, as required. Suppose the result holds for all $0 \leq i \leq n-1$, where $ n \geq 1$.  
	Let $i_n(t) \leq n$ denote the greatest index in $\mathcal{I}_n (t)$, i.e.\ the greatest index $j \leq n$ such that $t \in Y_j$.
	By~\ref{wn=wn-1} we have that $w_n''(t) = w_{i_n(t)}''(t)$ almost everywhere.  If $t \in (x_{i_n(t)} - \tau_{i_n(t)}, x_{i_n(t)} + \tau_{i_n(t)})$, then $w_{i_n(t)}''(t) =  \tilde{w}_{i_n(t)} ''(t)$ by~\ref{wn=twn}, and by definition of $\psi^2$, for $t \neq x_{i_n(t)}$,  
	\[
	\sum_{j \in \mathcal{I}_n (t)} \psi_j^2(t) 
	\geq \psi_{i_n(t)}^2 (t) 
	= 3 + 2|\tilde{w}_{i_n(t)}''(t)| 
	\geq 
	1 + 2^{-(n-1)} + 2|w_{i_n(t)}''(t)|,
	\]
	as required.  If $t \notin [x_{i_n(t)} - \tau_{i_n(t)}, x_{i_n(t)} + \tau_{i_n(t)}]$ (note then necessarily $i_n(t) \geq 1$ since $\tau_0 = T$), then $|w_{i_n(t)} ''(t)| \leq |w_{i_n(t)-1}''(t)| + 2^{-i_n(t)}$ almost everywhere by~\ref{wn''-wn-1''}. So by inductive hypothesis 
	\begin{align*} 
	\sum_{j \in \mathcal{I}_n(t)} \psi_j^2 (t) 
	& \geq  \sum_{j \in \mathcal{I}_{i_n(t)-1}(t)} \psi_j^2 (t) \\
	& \geq  2 |w_{i_n(t)-1}''(t)| + 1 + 2^{-((i_n(t)-1)-1)} \\
	& \geq  2|w_{i_n(t)}''(t)| - 2 \cdot 2^{-i_n(t)} + 1 + 2^{-((i_n(t)-1)-1)} \\
	& =  2|w_{i_n(t)}''(t)| +1 + 2^{-(i_n(t)-1)} \\
	& \geq  2|w_n''(t)| + 1 + 2^{-(n-1)},
	\end{align*} 
	as required for~\eqref{omicron}.
	
	Given this, now consider $ t \notin \bigcup_{i=0}^n \tilde{J}_i$. Then since by definition $\tilde{J} _j \supseteq J_j$ for all $j \geq 0$,~\eqref{vileq5offJn}  implies that $|(u-w_n) (t)| \leq 5 \theta_j |g_j (t)|$ for all $0 \leq j \leq n$.  Therefore  $\tilde{\phi}_j^2 (t, u - w_n) =  \psi_j^2 (t)|u -w_n|$ by definition of $\tilde{\phi}^2$, for $j \in \mathcal{I}_n (t)$.  Thus almost everywhere, we have by~\eqref{omicron} that 
	\begin{align*}
	\phi^2(t, u -w_n) - 2(u -w_n) w_n'' 
	& \geq  \sum_{j \in \mathcal{I}_n (t)} \left(\tilde{\phi}_j^2 (t, u-w_n)\right) - 2|u-w_n| |w_n''| \\
	& =  \sum_{j \in \mathcal{I}_n (t)} \left(\psi_j^2 (t)|u-w_n|\right) - 2|u-w_n| |w_n''| \\
	& =  |u-w_n| \left(\sum_{j \in \mathcal{I}_n (t)} (\psi_j^2 (t)) - 2|w_n''(t)|\right) \\
	& \geq | u -w_n|.
	\end{align*}
	
	Now, let $ t \in \tilde{J}_i \setminus H_i$ for some $0 \leq i \leq n$.  Then note that we must have $i \geq 1$, since $\tau_0 = T$, so $H_0 = \tilde{J}_0$. Since $\{\tilde{J}_j\}_{j=0}^n $ is pairwise disjoint, we have that $ t \notin \tilde{J}_j$ for $j \leq  i-1$.  Hence, again by~\eqref{vileq5offJn}, $|(u - w_i)| \leq 5 \theta_j |g_j (t)|$, so by definition of $\tilde{\phi}^2$, $\tilde{\phi}_j^2(t,  u -w_i) = \psi_j^2 (t) |u -w_i|$ for all $j \leq  i -1$, recalling assumption~\eqref{xi}.   Since $t \notin H_i$, we have $t \notin [x_i - \tau_i, x_i +,\tau_i]$, and hence that $|w_i''(t)| \leq |w_{i-1}''(t)| + 2^{-i}$ almost everywhere by~\ref{wn''-wn-1''}.  Hence by~\eqref{omicron} we have almost everywhere that
	\begin{align*}
	\sum_{j \in \mathcal{I}_{i-1} (t)}\psi_j^2(t)  \geq  1 + 2|w_{i-1}''(t)| + 2^{-(i-2)} 
	& \geq  1 + 2|w_i''(t)| - 2^{-(i-1)} + 2^{-(i-2)} \\
	& \geq  1 + 2|w_i ''(t)|,
	\end{align*}
	and so 
	\begin{align*}
	\phi^2(t,  u -w_i) - 2(u -w_i)w_i'' 
	& \geq \sum_{j \in \mathcal{I}_{i-1} (t)}\left(\tilde{\phi}_j^2 (t, u-w_i)\right) - 2|u-w_i ||w_i''| \\
	& = \sum_{j \in \mathcal{I}_{i-1} (t)}\left(\psi_j^2 (t)|u-w_i|\right) - 2|u-w_i||w_i''| \\
	& \geq |u -w_i|.
	\end{align*}
	Thus we have for almost every $t \notin \bigcup_{i=0}^n H_i$, noting the argument on $\tilde{J}_i \backslash H_i$ above applies by~\eqref{wn=wk},  that 
	\[
	\phi^2 (t, u-w_n) - 2(u-w_n) w_n'' \geq |u-w_n|,
	\]
	and hence that
	\begin{equation}
	\label{ineq}
	\int_{[-T, T] \setminus \bigcup_{i=0}^n H_i} \left(\phi^2 (t, u -w_n) - 2(u -w_n) w_n''\right) \geq \int_{[-T, T] \setminus \bigcup_{i=0}^n H_i} |u -w_n|.
	\end{equation}
	The reason for making this estimate is that we want to integrate $(u' - w_n') w_n'$ by parts on $[-T,T] \setminus \bigcup_{i=0}^n H_i$.  Under our standing assumption that $u(x_i) \neq w(x_i)$ for all $i \geq 0$, we see immediately that this is possible, since $(u - w_n)$ and $w_n'$ are bounded and absolutely continuous on $[-T,T] \setminus \bigcup_{i=0}^n H_i$ by~\ref{wn'lip}, and thus $(u - w_n) w_n'$ is absolutely continuous on $[-T,T] \setminus \bigcup_{i=0}^n H_i$.  However,  in the general case in which $w(x_j) = u(x_j)$ for some $0 \leq j \leq n$, and thus that $w_n (x_j) = u(x_j)$,  we have to argue a little more carefully.
	
	We claim that even in this general case the parts formula is still valid on $[-T,T] \setminus \bigcup_{i=0}^n H_i$; this is the assertion that $(u-w_n) w_n'$ can be written as an indefinite integral on  $[-T,T] \setminus \bigcup_{i=0}^n H_i$.   The argument of the preceding paragraph gives us that $(u-w_n) w_n'$ is absolutely continuous on subintervals bounded away from all $x_j$ with $u(x_j) = w(x_j)$.  Fix such an index $0 \leq j \leq n$.
	
	Let $t_j = t_{j, n} = \min \{ \sigma_n, \tau_j\}$.  \label{twnhoodofxn}By~\eqref{Ynfarfromxi}, and since $\{\sigma_n\}_{n=1}^{\infty}$ is decreasing, we know that $[x_j - \sigma_n, x_j + \sigma_n] \cap Y_m = \emptyset$ for all $j + 1 \leq  m \leq n$.  So by~\ref{wn=wn-1} and~\ref{wn=twn}, $w_n =  \tilde{w}_j + \rho_j$ on $[x_j - t_j, x_j + t_j]$.
	It suffices to check that $(u-w_n) w_n'$ can be written as an indefinite integral on $(x_j - t_j, x_j + t_j)$.  We check that
	\[
	\int_{x_j - t_j}^{x_j} \left((u-w_n) w_n'\right)'(s) \, ds =  - ((u-w_n)(x_j - t_j)) w_n'(x_j - t_j);
	\]
	the corresponding equality on the right of $x_j$ follows similarly (recall that $u(x_j) - w_n (x_j) = 0$).
	
	We know that on those subintervals of $(x_j - t_j, x_j + t_j)$ bounded away from $x_j$, $(u-w_n) w_n'$ is absolutely continuous.  We claim that  $((u-w_n) w_n')' \in L^1(x_j - t_j, x_j + t_j)$.  Given this, we can use the dominated convergence theorem to get the required result as follows.
	
	Since $J_j = \emptyset$, we see by~\eqref{vileq5offJn} that $|(u-w_n)(t)| \leq 5 \theta_j | g_j (t)|$ on $(x_j - t_j, x_j + t_j)$.  Thus we see by~\eqref{|tw'|} that
	\begin{align*}
	| (u-w_n) (t)w_n'(t)| = |(u - w_n)(t) \tilde{w}_j'(t)|
	& \leq 5\theta_j  |g_j (t)| 3 \two 1/ |t - x_j| \\
	& \leq 15 \theta_j |t - x_j| ( \two 1/ |t - x_j|)^2\\
	& \to 0\ \textrm{as $t \to x_j$}.
	\end{align*}
	So now, assuming that the dominated convergence theorem can be applied, we see that
	\begin{align*}
	- ((u-w_n)(x_j - t_j)) w_n'(x_j - t_j)
	& = \lim_{t \uparrow x_j }\left( ((u-w_n)(t)) w_n'(t)\right) \\
	& \phantom{=} {}- ((u-w_n)(x_j - t_j)) w_n'(x_j - t_j) \\
	&=\lim_{t \uparrow x_j } \int_{x_j - t_j}^t \left((u-w_n) w_n'\right)'(s) \, ds\\
	& = \int_{x_j - t_j}^{x_j}\left((u-w_n) w_n'\right)'(s) \, ds,
	\end{align*}
	as required. It just remains to justify our use of the dominated convergence theorem, i.e.\ to show that $((u - w_n) w_n')' \in L^1 (x_j - t_j, x_j + t_j)$. 
	Again, noting that~\eqref{vileq5offJn} still holds, we have, using~\ref{wn=twn} and Cauchy-Schwartz, that
	\begin{align*}
	\lefteqn{\int_{x_j - t_j}^{x_j + t_j} |((u-w_n) w_n')'| }\\
	& = \int_{x_j - t_j}^{x_j + t_j} |((u-\tilde{w}_j)\tilde{w}_j')'| \\
	& \leq  \int_{x_j - t_j}^{x_j + t_j} |(u-\tilde{w}_j) \tilde{w}_j''| + \int_{x_j - t_j}^{x_j + t_j} |(u' - \tilde{w}_j') \tilde{w}_j'| \\
	& \leq \int_{x_j- t_j}^{x_j+ t_j} |5 \theta_j g_j \tilde{w}_j''|  +  \int_{x_j - t_j}^{x_j + t_j} |u'\tilde{w}_j'| +   \int_{x_j - t_j}^{x_j + t_j} |\tilde{w}_j'|^2 \\
	& \leq   5 \theta_j \int_{ - t_j}^{t_j} |g \tilde{w}''| +  \left(\int_{x_j - t_j}^{x_j + t_j} |u'|^2\right)^{\! 1/2}\left(\int_{- t_j}^{t_j} |\tilde{w}'|^2\right)^{\! 1/2}  + \int_{- t_j}^{t_j} |\tilde{w}'|^2.
	\end{align*}
	This right hand side is finite by~\eqref{wn''cont}, and since $u, \tilde{w} \in W^{1,2}(-T, T)$.
	
	So, using~\eqref{triv}, and recalling that $u(\pm T) = w(\pm T)$, and using~\eqref{ineq} (recalling that $H_i \subseteq \tilde{J}_i$), we have, integrating by parts as we now know we can do, that 
	\begin{align}
	\lefteqn{ \int_{[-T, T] \setminus \bigcup_{i=0}^n H_i}\left( \phi^2 (t, (u - w_n)) + (u')^2 - (w_n')^2\right)}\nonumber\\
	& \geq \int_{[-T, T] \setminus \bigcup_{i=0}^n H_i}\left( \phi^2 (t, u -w_n) + 2(u' -w_n') w_n'\right)\nonumber\\
	& =  2[(u - w_n) w_n']_{[-T, T] \setminus \bigcup_{i=0}^n H_i} + \int_{[-T, T]\setminus \bigcup_{i=0}^n H_i}\left( \phi^2(t, u -w_n) - 2 (u-w_n) w_n''\right) \nonumber\\
	& = -2 \sum_{i=0}^n [(u -w_i) w_i']_{x_i - d_i}^{x_i + d_i} + \int_{[-T, T] \setminus \bigcup_{i=0}^n H_i}\left( \phi^2 (t, u -w_n) - 2(u -w_n) w_n''\right) \nonumber\\
	& \geq  -2 \sum_{i=0}^n (E_{i, +} - E_{i, -}) + \int_{[-T, T]\setminus \bigcup_{i=0}^n H_i} |u-w_n|. \label{offHi}
	\end{align}
	So, since $\{\tilde{J}_i\}_{i=0}^n$ is pairwise disjoint, we can argue as follows, using~\eqref{wn=wk}, ~\eqref{Jicomplete2},~\eqref{offHi}, and lemma~\ref{partserror} to see that
	\allowdisplaybreaks{
	\begin{align*} 
	\lefteqn{\mathscr{L}_n (u) - \mathscr{L}_n (w_n)}\\
	& =  \int_{-T}^T \left((u')^2 + \phi(t, u-w_n) - (w_n')^2\right) \\
	& =  \int_{\bigcup_{i=0}^n \tilde{J}_i}\left((u')^2 + \phi(t, u -w_n) - (w_n')^2\right) + \int_{[-T, T] \setminus \bigcup_{i=0}^n \tilde{J}_i}\left((u')^2 + \phi(t, u - w_n) - (w_n')^2\right) \\
	& =  \sum_{i=0}^n \int_{\tilde{J}_i} \left((u')^2 + \phi(t, u -w_i) - (w_i')^2\right) + \int_{[-T,T]\setminus \bigcup_{i=0}^n \tilde{J}_i}\left( (u')^2 + \phi(t, u-w_n) - (w_n')^2\right)\\
	& \geq  \sum_{i=0}^n\left(\frac{21\theta_i g(c_i)}{(\one1/c_i)^{1/3}} + 2(I_{i,+} - I_{i,-}) +  \int_{\tilde{J}_i \backslash H_i} \left(\phi^2(t, u-w_i) + (u')^2 - (w_i')^2\right)  \right) \\
	&\phantom{\geq} {}+ \int_{[-T, T] \setminus \bigcup_{i=0}^n \tilde{J}_i} \left(\phi^2 (t, u -w_n) + (u')^2 - (w_n')^2\right) \\
	& \geq  \sum_{i=0}^n \left(\frac{21\theta_i g(c_i)}{(\one1/c_i)^{1/3}} + 2(I_{i,+} - I_{i,-})\right) + \int_{[-T, T] \setminus \bigcup_{i=0}^n H_i} \left(\phi^2 (t, u-w_n) + (u')^2 - (w_n')^2\right)  \\
	& \geq   \sum_{i=0}^n \left(-2(E_{i, +} - E_{i, -})  + \frac{21\theta_i g(c_i)}{(\one1/c_i)^{1/3}} + 2(I_{i,+} - I_{i,-}) \right)+ \int_{[-T,T] \setminus \bigcup_{i=0}^n H_i}|u-w_n|\\
	& =  \sum_{i=0}^n \left( 2 \left((I_{i, +} - E_{i, +}) - (I_{i, -} - E_{i, -})\right) + \frac{21\theta_i g(c_i)}{(\one1/c_i)^{1/3}}\right) + \int_{[-T, T]\setminus \bigcup_{i=0}^n H_i} |u - w_n| \\
	& \geq  \sum_{i=0}^n \left( \frac{21\theta_i g(c_i)}{(\one1/c_i)^{1/3}} - 2\left(|I_{i, +}- E_{i, +}| + |I_{i, -} - E_{i, -}|\right) \right) + \int_{[-T,T] \setminus \bigcup_{i=0}^n H_i} |u-w_n| \\
	& \geq \sum_{i=0}^n \left( \frac{\theta_i g(c_i)}{(\one1/c_i)^{1/3}}\right) + \int_{[-T, T] \setminus \bigcup_{i=0}^n H_i} |u-w_n|.
	\qedhere
\end{align*}
}
\end{proof} 
\begin{corollary}
	\label{cor} 
	Suppose for all $n \geq 0$ that the assumptions~\eqref{mu} and~\eqref{xi} hold.  
	
	Then 
	\[
	\mathscr{L}(u) - \mathscr{L} (w)  \geq \sum_{i=0}^{\infty}\left( \frac{\theta_i g(c_i)}{(\one1/c_i)^{1/3}}\right) + \int_{[-T, T] \setminus \bigcup_{i=0}^{\infty}H_i}  |u-w| > 0.
	\]
\end{corollary}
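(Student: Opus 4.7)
The plan is to combine the two preceding lemmas and pass to the limit. For each $n \geq 0$, Lemma~\ref{finitejump} gives
\[
\mathscr{L}(u) - \mathscr{L}(w) \geq \mathscr{L}_n(u) - \mathscr{L}_n(w_n) - \frac{T_{n+1}^2}{2},
\]
and the hypotheses~\eqref{mu} and~\eqref{xi} hold in particular for this $n$, so Lemma~\ref{ngood} applies to give
\[
\mathscr{L}(u) - \mathscr{L}(w) \geq \sum_{i=0}^{n} \frac{\theta_i g(c_i)}{(\log 1/c_i)^{1/3}} + \int_{A_n} |u - w_n| - \frac{T_{n+1}^2}{2},
\]
where I write $A_n \mathrel{\mathop:}= [-T,T] \setminus \bigcup_{i=0}^n H_i$ for brevity.

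Now I would let $n \to \infty$. The error $T_{n+1}^2 / 2 \to 0$ by~\eqref{iv}. The partial sum is non-negative and increasing, so converges (possibly to $+\infty$) to $\sum_{i=0}^\infty \theta_i g(c_i) / (\log 1/c_i)^{1/3}$. For the integral, the sets $A_n$ are decreasing with intersection $A \mathrel{\mathop:}= [-T,T] \setminus \bigcup_{i=0}^{\infty} H_i$, so $\mathds{1}_{A_n}(t) \downarrow \mathds{1}_{A}(t)$ pointwise; combined with the uniform convergence $w_n \to w$ from Lemma~\ref{wncvg}, the integrands $\mathds{1}_{A_n} |u - w_n|$ converge pointwise to $\mathds{1}_{A} |u - w|$. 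Since $u \in W^{1,2}$ is bounded and the $w_n$ are uniformly bounded, Fatou's lemma (or, equivalently, dominated convergence) yields
\[
\liminf_{n \to \infty} \int_{A_n} |u - w_n| \geq \int_{A} |u - w|.
\]
Combining the three limits produces the asserted lower bound
\[
\mathscr{L}(u) - \mathscr{L}(w) \geq \sum_{i=0}^{\infty} \frac{\theta_i g(c_i)}{(\log 1/c_i)^{1/3}} + \int_{A} |u - w|.
\]

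Finally, for the strict inequality, I would invoke the standing assumption (made just before Lemma~\ref{lemma1}) that $u(x_n) \neq w(x_n)$ for all $n \geq 0$. In particular $u(x_0) \neq w(x_0)$, so from Lemma~\ref{lemma1} we have $a_0, b_0 > 0$, hence $c_0 = \max\{a_0, b_0\} > 0$. Since $g(c_0) > 0$, the $i = 0$ term of the sum is strictly positive, and the full right-hand side is strictly positive as required. The main obstacle is simply the measure-theoretic bookkeeping in passing to the limit in the integral, where both the integrand and the domain depend on $n$; but the uniform convergence $w_n \to w$ and the monotonicity of the sets $A_n$ make this straightforward via Fatou.
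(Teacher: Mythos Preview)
Your proposal is correct and follows essentially the same approach as the paper: combine Lemma~\ref{finitejump} and Lemma~\ref{ngood}, then pass to the limit using the uniform convergence $w_n \to w$ and a dominated convergence (or Fatou) argument for the integral term, with $T_{n+1}^2 \to 0$ killing the error. The paper's proof additionally remarks how strict positivity would follow in the general case where $u(x_n) = w(x_n)$ for all $n$ (the integral term then supplies the positivity), but under the standing assumption you correctly invoke, your argument via $c_0 > 0$ suffices.
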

\begin{proof}  
	This follows by the preceding lemma and the dominated convergence theorem as follows. 
	It is straightforward to see that
	\[
	\lim_{n \to \infty} \left(|u-w_n| \mathds{1}_{[-T, T] \setminus \bigcup_{i=0}^n  H_i}\right)(t) = \left(|u-w|\mathds{1}_{[-T, T] \setminus \bigcup_{i=0}^{\infty}H_i}\right)(t)
	\]
	for all $t \in [-T, T]$: for $ t\in H_k$ for some $k\geq 0$, eventually both sides are  $0$; for $t \notin \bigcup_{i=0}^{\infty} H_i$, we see that
	\begin{align*}
	\big|\mathds{1}_{[-T, T] \setminus \bigcup_{i=0}^{n}H_i}(t)|(u - w_n )(t)| & - \mathds{1}_{[-T, T] \setminus \bigcup_{i=0}^{\infty}H_i}(t) |(u -w)(t)|\big|\\
	&=  ||(u - w_n)(t)| - |(u - w)(t)||\\
	& \leq |(u - w_n)(t) - (u - w)(t)| \\
	& = |w_n (t) - w(t)| \\
	& \to 0 \quad \textrm{as $ n \to \infty$}.
	\end{align*}
	Moreover, since $w_n \to w$ uniformly, we have  that
	\[
	\sup_{n \geq 0}\left\| |u -w_n| \mathds{1}_{[-T, T] \setminus \bigcup_{i=0}^n H_i}\right\|_{\infty} \leq \sup_{n \geq 0} \|u -w_n\|_{\infty} < \infty.
	\]
	So the dominated convergence theorem implies that
	\begin{align*}
	\lim_{n \to \infty}\int_{[-T, T] \setminus \bigcup_{i=0}^n H_i} |u -w_n| 
	& =  \lim_{n \to \infty}\int_{-T}^{T} \left(|u -w_n|\mathds{1}_{[-T, T] \setminus \bigcup_{i=0}^n H_i}\right) \\
	& =  \int_{-T}^{T} \lim_{n \to \infty}\left(|u -w_n| \mathds{1}_{[-T, T] \setminus \bigcup_{i=0}^{n} H_i} \right) \\
	& =  \int_{-T}^{T} \left(|u -w| \mathds{1}_{[-T, T] \setminus \bigcup_{i=0}^{\infty}H_i}\right)\\
	& =  \int_{[-T, T] \setminus \bigcup_{i=0}^{\infty}H_i}|u - w|.
	\end{align*}
	Lemma~\ref{finitejump} and~\eqref{iv} give that
	\[
	\lim_{n \to \infty} (\mathscr{L}_n(u) - \mathscr{L}_n (w_n) ) = \mathscr{L}(u) - \mathscr{L}(w).
	\]
	So since, by assumption, lemma~\ref{ngood} applies for all $n\geq 0$, we can pass to the limit on each side of the inequality in the conclusion of the lemma to get the required result.
	
	We note that in the general case we do indeed have strict inequality, as is necessary for the contradiction proof.  If $u(x_n) \neq w(x_n)$ for some $n\geq 0$, then $c_n > 0$ and so the infinite sum is strictly positive.  If $u(x_n) = w(x_n)$ for all $n\geq 0$, then $[-T, T] \setminus \bigcup_{i=0}^{\infty}H_i = [-T, T]$, so on the assumption that $u \neq w$, where both are continuous functions, the integral term must be strictly positive.
\end{proof}
The arguments of the previous lemma and its corollary relied on the intervals we have to give special attention, the $\tilde{J}_j$, being small enough that they did not escape $Y_j$, or overlap with later $Y_k$ and hence possibly $\tilde{J}_k$.  The trick is now that should one of these assumptions fail, thus apparently making the proof more complicated,  in fact this means that we can ignore the modifications we made at stage $j$ and beyond.  That one of our assumptions fails for $j$ means that $\tilde{J}_j$ is too large, which by the very definition of $\tilde{J}_j$ implies the graph of $u$ is far away from that of $w$ on a set of large measure around $x_j$.  We have chosen our constants so that this large difference between $u$ and $w$ around $x_j$ gives enough weight to our Lagrangian that we can discard all modifications we made to $w_{j-1}$ and hence to $\mathscr{L}_{j-1}$ and work just with these instead; the error so incurred is small enough that it is absorbed into this extra weight.  Very roughly, if $u$ misses $w$ at $x_j$ by an apparently inconveniently large amount, then we don't have to worry about the fine detail of our variational problem at and beyond the scale $j$.
\begin{lemma}
	\label{lemma8} 
	Let $n\geq 1$ be such that assumptions~\eqref{mu} and~\eqref{xi} hold for $n-1$, but for some $0 \leq k \leq n-1$ we have that $\tilde{J}_k \cap Y_n \neq \emptyset$, i.e.~\eqref{mu} fails for $n$.
	
	Then 
	\[
	\mathscr{L}_{n-1}(u) - \mathscr{L}_{n-1}(w_{n-1}) \geq T_n^2.
	\]
\end{lemma}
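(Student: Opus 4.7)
The plan is to invoke Lemma~\ref{ngood} at the previous stage $n-1$, where by hypothesis assumptions~\eqref{mu} and~\eqref{xi} still hold, and then to isolate a single summand of the resulting lower bound. Applying the lemma with $n-1$ in place of $n$ will give
\[
\mathscr{L}_{n-1}(u) - \mathscr{L}_{n-1}(w_{n-1}) \;\geq\; \sum_{i=0}^{n-1} \frac{\theta_i\, g(c_i)}{(\log 1/c_i)^{1/3}} + \int_{[-T,T]\setminus \bigcup_{i=0}^{n-1} H_i} |u-w_{n-1}|;
\]
I will then discard the non-negative integral and keep only the $k$-th summand, reducing the claim to $\theta_k\, g(c_k)/(\log 1/c_k)^{1/3} \geq T_n^2$.

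The key geometric input is a lower bound on $c_k$ extracted from the failure of~\eqref{mu}. By hypothesis there is a point $t \in \tilde{J}_k \cap Y_n$; since $t \in Y_n$, \eqref{Ynfarfromxi} gives $|t - x_k| \geq \sigma_n$, while $t \in \tilde{J}_k = [x_k - c_k, x_k + c_k]$ gives $|t - x_k| \leq c_k$. Hence $c_k \geq \sigma_n$. Since $s \mapsto g(s)/(\log 1/s)^{1/3}$ is increasing on $(0, T]$ and $\theta_k \geq 1$, I obtain
\[
\frac{\theta_k\, g(c_k)}{(\log 1/c_k)^{1/3}} \;\geq\; \frac{g(\sigma_n)}{(\log 1/\sigma_n)^{1/3}} \;\geq\; \frac{2\sigma_n}{(\log 1/\sigma_n)^{1/3}},
\]
the second inequality using $\log \log 1/\sigma_n \geq 2$, which holds because $\sigma_n \leq T$ with $T$ chosen very small.

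The last step is to verify $2\sigma_n/(\log 1/\sigma_n)^{1/3} \geq T_n^2$ numerically. Condition~\ref{t1} gives $T_n \leq (T - |x_n|)\sigma_n T_{n-1}/2 \leq T^2 \sigma_n/2$, so $T_n^2 \leq T^4\sigma_n^2/4$, and the desired inequality reduces to $T^4 \sigma_n (\log 1/\sigma_n)^{1/3} \leq 8$. Since $s \mapsto s(\log 1/s)^{1/3}$ is increasing on $(0, T]$ for $T$ as small as considered, the left-hand side is at most $T^5 (\log 1/T)^{1/3}$, well below $8$ by the smallness of $T$ fixed at~\eqref{T0small}. The principal obstacle is not conceptual but bookkeeping: threading the various smallness parameters to confirm that the initial choice of $T$ is tight enough for the final numerical comparison. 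The geometric heart of the argument is simply the observation $c_k \geq \sigma_n$, which says that the very condition making the general proof of Lemma~\ref{ngood} fail at stage $n$ concentrates enough mass into one earlier term of the sum already controlled at stage $n-1$.
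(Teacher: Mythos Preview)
Your proposal is correct and follows essentially the same approach as the paper: apply Lemma~\ref{ngood} at stage $n-1$ and bound the $k$-th summand of the resulting sum from below by $T_n^2$. The paper's numerical route is slightly more direct, establishing $c_k \geq T_n$ straight from the triangle inequality (if $c_k < T_n$ then $|x_n - x_k| \leq T_n + c_k < 2T_n \leq |x_n - x_k|$) and then using~\eqref{T0small} to obtain $g(c_k)/(\log 1/c_k)^{1/3} \geq c_k \cdot c_k \log\log 1/c_k \geq c_k^2 \geq T_n^2$, which sidesteps your detour through $\sigma_n$ and condition~\ref{t1}.
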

\begin{proof} 
	That~\eqref{mu} fails for $n$ implies that $c_k \geq T_n$, otherwise choosing $t \in \tilde{J}_k \cap Y_n$ we would have, since~\ref{t1} implies that $T_n \leq |x_n - x_k| /2$, that
	\[
	|x_n - x_k| \leq |x_n - t| + | t- x_k| \leq T_n + c_k < 2T_n \leq |x_n - x_k|,
	\]
	which is a contradiction. So, applying lemma~\ref{ngood} to $n-1$ we see, using this fact, that $\theta_k \geq 1$,  and since~\eqref{T0small} implies that $c_k \leq c_k^{1/3} \leq ( 1/ \one 1/c_k)^{1/3}$, that 
	\begin{align*}
	\mathscr{L}_{n-1}(u) - \mathscr{L}_{n-1}(w_{n-1}) 
	& \geq  \sum_{i=0}^{n-1} \left(\frac{\theta_i g(c_i)}{(\one1/c_i)^{1/3}}\right) + \int_{[-T, T] \setminus \bigcup_{i=0}^{n-1}H_i} |u -w_{n-1}| \\
	& \geq  \frac{\theta_k g(c_k)}{(\one1/c_k)^{1/3}}\\
	& \geq  \frac{c_k \two 1/c_k}{(\one 1/ c_k)^{1/3}}  \\
	& \geq  c_k^2 \two 1/c_k\\
	& \geq  c_k^2 \\
	& \geq  T_n^2.
	\qedhere
	\end{align*}
\end{proof}
\begin{lemma} 
	\label{lemma9}
	Let $n\geq 1$ be such that assumption~\eqref{mu} holds for $n$, assumption~\eqref{xi} holds for $n-1$, but $\tilde{J}_n \nsubseteq Y_n$, i.e.~\eqref{xi} fails for $n$.  
	
	Then 
	\[
	\mathscr{L}_{n-1}(u) - \mathscr{L}_{n-1}(w_{n-1}) \geq T_n^2.
	\]
\end{lemma}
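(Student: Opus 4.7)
\medskip
\noindent\textbf{Proof proposal.}
The plan is to apply lemma~\ref{ngood} at index $n-1$ and then exploit the failure of~\eqref{xi} at $n$ to force a sufficiently large contribution to the integral term on the right-hand side. Since assumptions~\eqref{mu} and~\eqref{xi} both hold through index $n-1$, lemma~\ref{ngood} gives
\[
\mathscr{L}_{n-1}(u) - \mathscr{L}_{n-1}(w_{n-1}) \geq \int_{[-T,T] \setminus \bigcup_{i=0}^{n-1} H_i} |u - w_{n-1}|,
\]
so it suffices to bound this integral below by $T_n^2$.

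First I would observe that since $H_i \subseteq \tilde{J}_i$, assumption~\eqref{mu} at $n$ gives $Y_n \cap \bigcup_{i=0}^{n-1} H_i = \emptyset$, so all of $Y_n$ lies inside the integration region. The failure of~\eqref{xi} at $n$ means $c_n > T_n$; by symmetry I would assume $c_n = b_n$ (the case $c_n = a_n$ is handled identically by reflection), so the interval $I \mathrel{\mathop:}= [x_n + T_n/2, x_n + T_n]$ sits strictly inside $J_n \cap Y_n$ and hence inside the integration region.

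Next I would establish a pointwise lower bound for $|u - w_{n-1}|$ on $I$. The definition of $J_n$ gives $|u(t) - w(x_n)| > 3\theta_n |g_n(t)|$ there, while~\ref{wfixxi} gives $w_{n-1}(x_n) = w(x_n)$ and the estimate~\eqref{wn-1 under gn} (proved during the inductive construction of $w_n$) gives $|w_{n-1}(t) - w_{n-1}(x_n)| \leq (\theta_n/2)|g_n(t)|$ for all $t \in [-T,T]$; the triangle inequality then yields $|u(t) - w_{n-1}(t)| \geq (5/2)\theta_n |g_n(t)|$ on $J_n$. On $I$, the smallness condition~\eqref{T0small} ensures $\log\log 1/(t - x_n) \geq 2$, so $|g_n(t)| \geq T_n$. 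Integrating over $I$, which has length $T_n/2$, gives
\[
\int_I |u - w_{n-1}| \geq \frac{5}{2}\theta_n \cdot T_n \cdot \frac{T_n}{2} = \frac{5}{4}\theta_n T_n^2 \geq T_n^2,
\]
using $\theta_n \geq 1$. The only step requiring care is assembling the triangle-inequality lower bound on $I$ from the correct pieces of the construction; everything else is routine given the cited results, and in particular the sum term from lemma~\ref{ngood} is simply discarded.
\medskip
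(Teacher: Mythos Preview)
Your proof is correct and follows the same overall architecture as the paper: invoke lemma~\ref{ngood} at level $n-1$, discard the sum, use~\eqref{mu} to see that $Y_n$ avoids $\bigcup_{i\le n-1}H_i$, and then extract $T_n^2$ from the integral of $|u-w_{n-1}|$ over a subinterval of $Y_n$.

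The one genuine difference is in how you obtain the pointwise lower bound on $|u-w_{n-1}|$. The paper routes through $w_n$: it invokes~\eqref{vnbig} from lemma~\ref{lemma1} to get $|(u-w_n)(t)|\ge\theta_n g(b_n)\ge g(T_n)$ on $[x_n,x_n+b_n]$, then peels off $\|w_n-w_{n-1}\|_\infty$ via~\ref{cvg} and~\ref{r3}, arriving at $|u-w_{n-1}|\ge g(T_n)/2$ on the full half $[x_n,x_n+T_n]$. You instead work directly with $w_{n-1}$, combining the defining inequality of $J_n$ with~\eqref{wn-1 under gn} to get $|u-w_{n-1}|\ge(5/2)\theta_n|g_n|$ on $J_n$, and then compensate for the $t$-dependence of $|g_n|$ by shrinking to $[x_n+T_n/2,x_n+T_n]$. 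Your route is arguably lighter: it avoids~\eqref{vnbig}, which in the paper rests on the convexity argument of lemma~\ref{lemma1}, and it avoids the $w_n\to w_{n-1}$ transfer. The paper's route has the minor advantage of giving a constant lower bound on the whole half-interval rather than a bound that degenerates near $x_n$, but both produce the required $T_n^2$ with room to spare.
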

\begin{proof}  
	We suppose that $c_n = b_n$.  The case in which $a_n > b_n$ differs only in trivial notation.  That~\eqref{xi} fails for $n$ implies that $b_n \geq T_n$.  That~\eqref{mu} holds for $n$ implies in particular that $Y_n \cap \bigcup_{i=0}^{n-1}\tilde{J}_i = \emptyset$.  Thus by lemma~\ref{ngood} for $n-1$, since by definition $H_i \subseteq \tilde{J}_i$ for all $0 \leq i \leq n-1$,
	\begin{align*}
	\mathscr{L}_{n-1}(u) - \mathscr{L}_{n-1}(w_{n-1}) 
	& \geq  \sum_{i=0}^{n-1}\left( \frac{\theta_i g(c_i)}{(\one1/c_i)^{1/3}}\right) + \int_{[-T, T] \setminus \bigcup_{i=0}^{n-1}H_i}|u -w_{n-1}| \\
	& \geq  \int_{[-T, T] \setminus \bigcup_{i=0}^{n-1}\tilde{J}_i}|u-w_{n-1}| \\
	& \geq  \int_{Y_n}|u -w_{n-1}|\\
	& \geq  \int_{x_n}^{x_n + T_n}|u-w_{n-1}|. 
	\end{align*}
	But we know by~\eqref{vnbig}, also using~\ref{cvg}, monotonicity of $g$,~\ref{r3}, and that $\theta_n \geq 1$, that for $ t \in [x_n, x_n + b_n]$ we have 
	\begin{align*}
	|(u-w_{n-1})(t)| \geq |(u-w_n) (t)| - |w_n (t) - w_{n-1}(t)| 
	& \geq \theta_n g(b_n) - \|w_n - w_{n-1}\|_{\infty} \\
	& \geq g(T_n) - 5K_n g(R_n) \\
	& \geq g(T_n)/2.
	\end{align*}
	Hence we see, since $[x_n, x_n + T_n] \subseteq [x_n, x_n + b_n]$, and since $\two 1/T_n \geq 1$, that
	\[
	\mathscr{L}_{n-1}(u) - \mathscr{L}_{n-1}(w_{n-1})  \geq  \int_{x_n }^{x_n+T_n} g(T_n)/2  = T_n g(T_n)/2 \geq T_n^2.
	\qedhere
	\]
	\end{proof}
We are now in a position to conclude our argument. If our crucial assumptions~\eqref{mu} and~\eqref{xi} hold for all $n \geq 0$, then we are in the case of corollary \ref{cor} and we are done.  Otherwise, choose the least $n \geq 0$ such that one of~\eqref{mu} or~\eqref{xi} fails.  We observe that then $ n\geq 1$ necessarily, since $\tilde{J}_0 \subseteq [-T, T]$. 

Suppose $n\geq 1$ is such that~\eqref{mu} fails for $n$.  Then we are in the case of lemma~\ref{lemma8} and we see by lemma~\ref{finitejump} that 
\[ 
\mathscr{L}(u) - \mathscr{L}(w)  \geq \mathscr{L}_{n-1}(u) - \mathscr{L}_{n-1}(w_{n-1}) - \frac{T_n^2}{2}  \geq  \frac{T_n^2}{2}  >  0.
\]

Suppose $n \geq 1$ is such that~\eqref{mu} holds for $n$ but~\eqref{xi} fails.  Then we are in the case of lemma~\ref{lemma9} and we see again by lemma~\ref{finitejump} that
\[
\mathscr{L} (u) - \mathscr{L}(w) \geq  \mathscr{L}_{n-1}(u) - \mathscr{L}_{n-1}(w_{n-1}) - \frac{T_n^2}{2} \geq  \frac{T_n^2}{2}
>  0.  
\]
This contradicts the choice of $u$ as a minimizer, so we know that no minimizer $u \neq w$ exists.
Letting $\{x_n\}_{n=0}^{\infty}$ be an enumeration of $\mathbb{Q} \cap(-T, T)$ concludes the proof.
\section{Approximation and variations}
\label{sec:variations}
In this section we investigate different ways of approximating the minimum value of a variational problem.  Throughout we continue to assume only that the Lagrangian $L$ is continuous.
\begin{Question}
	Let $v \in W^{1,1}(a,b)$ be a minimizer of $\mathscr{L}$ over $\scra{v(a)}{v(b)}$.  Does there exist a sequence $u_j \in W^{1, \infty} (a,b)\cap \scra{v(a)}{v(b)}$ such that $\mathscr{L}(u_j) \to \mathscr{L}(v)$? 
	\label{Qu:Lav}
\end{Question}
The answer to this in general is a well-known ``no'', and in situations where the answer is negative, the {\it Lavrentiev phenomenon} is said to occur.  Lavrentiev~\cite{Lavrentiev-1926} gave the first example, and Mani\`a~\cite{Mania-1934} gave an example with a polynomial Lagrangian.  Both examples have Lagrangians which vanish along the minimizing trajectory.  Ball and Mizel~\cite{Ball-Mizel-1985} gave the first superlinear examples, with polynomial $L$ for with $L_{pp} \geq \epsilon > 0$ for some $\epsilon > 0$.

This settles the question of whether in general the minimum value can be approximated by Lipschitz trajectories: no.  A related question is whether the minimum value can be approximated by {\it adding} Lipschitz functions to the minimizing trajectory.  One way of motivating this question is to consider that classically one finds minimizers by taking the first variations in the direction of functions $u \in C_0^{\infty}(a,b)$, i.e.\ computing $\frac{d}{d\gamma} \mathscr{L}(v + \gamma u) \vert_{\gamma=0}$, and looking for functions $v$ for which this value is $0$ for all such $u$.  Under appropriate assumptions on $L$ one can thereby derive the Euler-Lagrange equation, and look for minimizers among solutions to that.  But how does the function $\gamma \mapsto \mathscr{L}(v + \gamma u)$ behave in general?

First we investigate this question forgetting for the moment that $u$ is taken to be Lipschitz.  
\begin{Question}
	Let $v \in W^{1,1}(a,b)$.  Does there exist a sequence $u_j \in \scra{v(a)}{v(b)}$, $u_j \neq v$, such that $\mathscr{L}(u_j) \to \mathscr{L}(v)$? 
	\label{Qu:JB}
\end{Question}
The answer is an easy but apparently unrecorded ``yes'', assuming only continuity of $L$, and holds for vector-valued trajectories $v$ without too much extra work.
\begin{theorem}
	\label{thm:JB}
	Let $v \in W^{1,1}((a,b); \mathbb{R}^n)$ be such that $t \mapsto L(t, v(t), v'(t))$ is integrable, $U \subseteq (a,b)$ be open and non-empty, and   $\epsilon > 0$.
	
	Then there exists $u \in \scra{v(a)}{v(b)}$ such that   $\emptyset \neq \{ t \in [a,b] : u(t) \neq v(t) \} \subseteq U$ and $|\mathscr{L}(u) - \mathscr{L}(v) | \leq \epsilon$.
\end{theorem}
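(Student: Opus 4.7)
The plan is to modify $v$ only on a short interval around a Lebesgue point of $v'$ inside $U$, using an affine interpolation so that the perturbed curve's image in phase space stays in a fixed compact set (where continuity of $L$ gives a uniform bound), while absolute continuity of the Lebesgue integral absorbs the original integrand over the vanishingly small interval of modification.

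Concretely, I would select $t_0 \in U$ a Lebesgue point of $v'$ (which exists since a.e.\ $t \in (a,b)$ is one and $U$ has positive measure), set $p_0 := v'(t_0) \in \mathbb{R}^n$, and for $\delta > 0$ so small that $I_\delta := [t_0 - \delta, t_0 + \delta] \subseteq U$ consider the candidate $u$ defined to equal $v$ off $I_\delta$ and the affine interpolant
\[
A_\delta(t) := v(t_0 - \delta) + \ell_\delta (t - (t_0 - \delta)),\quad \ell_\delta := \frac{v(t_0 + \delta) - v(t_0 - \delta)}{2 \delta} \in \mathbb{R}^n,
\]
on $I_\delta$. By the Lebesgue point property, $\ell_\delta \to p_0$ as $\delta \to 0$; clearly $u \in \scra{v(a)}{v(b)}$ and $\{u \neq v\} \subseteq I_\delta \subseteq U$. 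For the error I would write
\[
|\mathscr{L}(u) - \mathscr{L}(v)| \leq \int_{I_\delta} |L(t, v(t), v'(t))| \, dt + \int_{I_\delta} |L(t, u(t), u'(t))| \, dt.
\]
The first integral vanishes as $\delta \to 0$ by absolute continuity of the integral, since $L(\cdot, v, v') \in L^1(a,b)$ by hypothesis. For the second, once $\delta$ is small enough that $\|\ell_\delta\| \leq \|p_0\| + 1$, the values $(t, u(t), u'(t)) = (t, A_\delta(t), \ell_\delta)$ lie in the fixed compact set $K := [a,b] \times \overline{B_{\|v\|_{\infty}}(0)} \times \overline{B_{\|p_0\| + 1}(0)}$, so continuity of $L$ gives $|L(t, u, u')| \leq \|L\|_{L^\infty(K)}$, and the whole integral is $O(\delta)$.

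It remains to guarantee $\{u \neq v\} \neq \emptyset$, which holds whenever $v$ is not affine on $I_\delta$. If $v$ happens to be affine there, then $v$ and $v'$ are bounded on $I_\delta$, and I would instead set $u := v + \eta \psi \vec{e}$ for a fixed non-zero $\psi \in C_c^\infty((t_0 - \delta, t_0 + \delta))$, a unit $\vec{e} \in \mathbb{R}^n$, and small $\eta > 0$; now $(t, u(t), u'(t))$ still lies in a fixed compact set in phase space as $\eta \to 0$, and uniform continuity of $L$ there forces $\int_{I_\delta} |L(t, u, u') - L(t, v, v')| \, dt \to 0$. The main---rather modest---conceptual point is the need to use averaging (the interpolation slope) rather than a pointwise perturbation of $v'$, so that $u'$ remains bounded uniformly despite the absence of any assumed modulus of continuity on $L$; the naive bump-function perturbation $v + \eta \psi$ in the general case would face the obstacle that $|L(t, v + \eta \psi, v' + \eta \psi')|$ has no available integrable majorant when $v'$ is unbounded.
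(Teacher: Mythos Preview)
Your argument is correct. The core idea---replace $v$ by the secant-line affine interpolant on a short interval $I_\delta$ centred at a Lebesgue point of $v'$, so that the slope $\ell_\delta$ stays in a fixed ball and the phase variables $(t,u(t),u')$ remain in a fixed compact set---is clean and works exactly as you say; the integrability hypothesis disposes of $\int_{I_\delta}|L(t,v,v')|$ by absolute continuity, and boundedness of $L$ on $K$ handles the new piece. The fallback bump perturbation for the affine case is also fine, since there $v'$ is locally bounded and uniform continuity of $L$ on a fixed compact applies directly.

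The paper's proof takes a different route. Rather than centring on a Lebesgue point and using the secant slope (which varies with $\delta$), the paper fixes a point $t_0$ where $v'(t_0)$ exists, chooses a \emph{single} large constant $m$ so that $\|v(t)-v(t_0)\|\le m|t-t_0|$ for all $t$, and then---via a small lemma on upper semicontinuity of $t\mapsto\lambda(E_t)$ where $E_t=\{s:\|v(s)-v(t)\|>m|s-t|\}$---locates a short ``excursion interval'' $(s_0,s_1)$ near $t_0$ on which the secant has norm \emph{exactly} $m$, and replaces $v$ by the affine interpolant there. Your approach is more direct and avoids the auxiliary lemma entirely. What the paper's approach buys is that the affine piece has prescribed slope norm $m$ and sits on a connected component of a sublevel set of the difference quotient; this ``component of $E_t$'' device is precisely the construction reused throughout Section~\ref{sec:regularity} to build competitor trajectories in the regularity proofs, so the extra work here is an investment that pays off later. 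For Theorem~\ref{thm:JB} in isolation, your argument is the shorter one.
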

\begin{remark}
	Our method of proof gives the immediate further information that $u$ is locally Lipschitz on $\{t \in [a,b] : u(t) \neq v(t) \}$.
\end{remark}
If the function $v$ is somewhere locally Lipschitz in $U$, then the approximation is obvious and can be done by adding to $v$ a non-zero function of small norm in $W_0^{1,\infty}((a,b); \mathbb{R}^n)$ which is zero where $v$ is not locally Lipschitz.  If $v$ is nowhere locally Lipschitz in $U$---which if $v$ is a minimizer implies that $L$ does not admit a partial regularity theorem---then the approximation is only slightly less obvious, and is done by replacing $v$ with an affine function on appropriately small intervals.  Notice however that the difference between $v$ and the approximating function is non-Lipschitz.

The proof requires an easy lemma.  For $v \in W^{1,1}((a,b); \mathbb{R}^n)$, $m > 0$, and $t \in (a,b)$, define 
\begin{align*}
E_t & \df \{ s \in [a,b] : \| v(s) - v(t)\| > m | s -t | \}; \ \textrm{and}\\
M_t & \df \{ s \in [a,b] : \| v(s) - v(t)\| = m | s -t | \}.
\end{align*}
\begin{lemma}
	\label{levelset}
	Let $v \in W^{1,1}((a, b); \mathbb{R}^n)$ and $m > 0$, and suppose that $t \in (a,b)$ is such that $\meas{M_{t}} = 0$.  
	
	Then $\meas{E_t} \geq \limsup_{s \to t} \meas{E_s}$.
\end{lemma}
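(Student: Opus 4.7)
The plan is to reduce the claim to the dominated convergence theorem applied to the indicator functions $\mathds{1}_{E_s}$. The key input is that continuity of $v$ forces $\mathds{1}_{E_s}(r) \to \mathds{1}_{E_t}(r)$ at every point $r$ where the defining inequality for $E_t$ is strict, i.e. off the negligible set $M_t \cup \{t\}$.

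First I would fix $r \in [a,b] \setminus (M_t \cup \{t\})$ and observe that since $r \notin M_t$ we have strict inequality in one of two directions: either $\|v(r) - v(t)\| > m|r-t|$, in which case $r \in E_t$, or $\|v(r) - v(t)\| < m|r-t|$, in which case $r \notin E_t$ (and $r \neq t$ allows us to speak of $|r-t| > 0$). Since $s \mapsto v(s)$ is continuous at $t$ and $s \mapsto m|r-s|$ is continuous in $s$, the strict inequality satisfied by $r$ at $t$ persists for all $s$ in a sufficiently small neighbourhood of $t$; that is, $\mathds{1}_{E_s}(r) = \mathds{1}_{E_t}(r)$ for $s$ close enough to $t$. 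By the hypothesis $\lambda(M_t) = 0$, this pointwise stabilization holds for almost every $r \in [a,b]$.

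Next I would convert the ``$\limsup_{s \to t}$'' into a sequential statement: choose a sequence $s_j \to t$ with $s_j \neq t$ such that $\lambda(E_{s_j}) \to \limsup_{s \to t} \lambda(E_s)$. By the preceding paragraph, $\mathds{1}_{E_{s_j}}(r) \to \mathds{1}_{E_t}(r)$ for a.e.\ $r \in [a,b]$, and these indicators are all dominated by the integrable function $\mathds{1}_{[a,b]}$. The dominated convergence theorem therefore yields
\[
\limsup_{s \to t} \meas{E_s} = \lim_{j \to \infty} \int_a^b \mathds{1}_{E_{s_j}} = \int_a^b \mathds{1}_{E_t} = \meas{E_t},
\]
which gives the desired inequality (and in fact equality).

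This proof is essentially a one-line dominated convergence argument; there is no real obstacle, since the hypothesis $\lambda(M_t) = 0$ is precisely what is needed to eliminate the ambiguous boundary where $\mathds{1}_{E_s}$ could fail to converge. The only point deserving mild care is the passage from $s \to t$ as a continuum limit to a sequential limit, which is handled by extracting a maximizing sequence for the $\limsup$.
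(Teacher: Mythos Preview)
Your proof is correct and is essentially the same argument as the paper's. Both rest on the single observation that for $r \notin E_t \cup M_t$ one has $r \notin E_s$ once $s$ is close enough to $t$ (strict inequality is stable under perturbation of $s$); the paper phrases the passage to the limit via the set-theoretic inclusion $\bigl(\bigcap_k \bigcup_{l \geq k} E_{t+\delta_l}\bigr) \setminus M_t \subseteq E_t$ together with $\lambda(\limsup_k A_k) \geq \limsup_k \lambda(A_k)$, whereas you phrase it as pointwise a.e.\ convergence of indicators plus dominated convergence --- these are the same thing. Your version is in fact a touch sharper: by also noting that $r \in E_t$ forces $r \in E_s$ for $s$ near $t$, you obtain full a.e.\ convergence and hence equality $\lambda(E_t) = \lim_{s \to t} \lambda(E_s)$, not merely the one-sided bound the paper records (and you are more explicit about extracting a maximising sequence to reduce the continuum $\limsup$ to a sequential limit).
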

\begin{proof}
	Let $t_k \in(a,b)$ be such that $t_k \to t$, and suppose that $s \in \bigcap_{k=1}^{\infty} \bigcup_{l=k}^{\infty} E_{t_l}$.  So for all $k \geq 1$ there exists an $l \geq k$ such that $s \in E_{t_{k_l}}$, which by definition implies that 
	\[
	\| v(s) - v(t_{k_l})\| > m | s - t_{k_l}|.
	\]
	Letting $k \to \infty$, continuity of $v$ implies that then
	\[
	\| v (s) - v(t) \| \geq m |s - t|,
	\]
	showing that $s \in E_t \cup M_t$.  Thus  $\bigcap_{k=1}^{\infty} \bigcup_{l=k}^{\infty} E_{t_l} \subseteq E_t \cup M_t$.  Therefore, since by assumption $\meas{M_t}=0$, we have that 
	\begin{align*}
	\lim_{k \to \infty} \meas{E_{t_{k}}} \leq \lim_{k \to \infty} \meas{ \bigcup_{l=k}^{\infty} E_{t_l}} = \meas{ \bigcap_{k=1}^{\infty} \bigcup_{l=k}^{\infty} E_{t_l}} \leq \meas{E_t \cup M_t} 
	&\leq \meas{E_t} + \meas{M_t} \\
	&= \meas{E_t},
	\end{align*}
	as required.
\end{proof}
\begin{proof}[Proof of theorem~\ref{thm:JB}]
	Choose $t_0 \in U$ such that $v'(t_0)$ exists and $\|v'(t_0)\| < \infty$.  Then there exists $\rho > 0$ such that $|t_0 - t | \leq \rho$ implies that $\|v(t) - v(t_0) \| \leq ( \| v'(t_0)\| + 1) |t - t_0|$.
	For $t \in [a,b]$  such that $|t - t_0| \geq \rho$, we have $\| v(t) - v(t_0)\| \leq 2 \sup_{s \in [a,b]} \|v(s)\| \leq 2 \sup_{s \in [a,b]} \|v(s)\| \rho^{-1}|t - t_0|$.  
	So, for all $m \geq \max \{ \|v'(t_0)\| + 1, 2 \sup_{s \in [a,b]} \|v(s)\| \rho^{-1}\}$, we have that $\|v(t) - v(t_0)\| \leq m |t - t_0|$ for all $t \in [a,b]$.  Choose such an $m$, moreover such that $\meas{\{s \in [a,b] : \| v(s) - v(t_0) \| = m |s-t_0|\}} = 0$; this is possible since this condition fails for at most countably many values of $m$.  Then lemma~\ref{levelset} implies that 
	\begin{equation}
	\label{meascont}
	0 \leq \limsup_{t \to t_0} \meas{E_t} \leq \meas{E_{t_0}} = \meas{\emptyset} = 0.
	\end{equation}
	
	By continuity of $L$ we can choose $\delta \in (0,1) $ such that $|L(t, y, p) - L(s, z, q)| \leq \epsilon/2(b-a)$ whenever $\max\{ |t|, \|y\|, \|p\|\} \leq |a| + |b| + \|v\|_{\infty} + m$ and $\max \{ |s-t|, \|y -z\|, \|p - q\|\} \leq \delta$.  Choose $\tau \in (0, \mathrm{dist}(t_0, [a,b] \setminus U)/2)$ such that
	\begin{enumerate}[label=(\ref{thm:JB}.\arabic*)]
		\item $\tau \leq \delta/2m$; \label{tau1}
		\item $\tau \leq \epsilon / (4 \sup_{t \in [a,b], \|p\|\leq m} |L(t, v(t), p)|)$; \label{tau2}
		\item $ \int_{E} \|v'(t)\|\, dt \leq \delta/2$ whenever $\meas{E} \leq \tau$; and\label{tau3}
		\item $\int_E |L(t, v(t), v'(t))|\, dt \leq \epsilon/4$ whenever $\meas{E} \leq \tau$. \label{tau4}
	\end{enumerate}
	By~\eqref{meascont} we can choose $\eta \in (0, \tau)$ such that $| t - t_0| \leq \eta$ implies that $0 \leq \meas{E_t} \leq \tau$.
	
	Now, if $\|v'(t)\| \leq m$ for almost every $t \in (t_0 - \eta, t_0 + \eta)$, then we can construct a trivial variation in the usual way, by taking some non-zero $\psi \in C^{\infty}((a,b); \mathbb{R}^n)$ with $\mathrm{spt} \psi \subseteq (t_0 - \eta, t_0+ \eta)$, and considering the sequence of functions $(v + j^{-1} \psi)$ as $j \to \infty$.
	
	So suppose otherwise, i.e.\ that there exists $s_0 \in (t_0 - \eta, t_0 + \eta)$ such that $v'(s_0)$ exists and $\|v'(s_0)\| > m$.  Then $s_0$ is an endpoint of some connected  component $(s_0, s_1)$ of the set $E_{s_0}$, by choice of $s_0$.  Notice, since $| s_0 - t_0| < \eta$, by the choice of $\eta$ we have that $0 < (s_1 - s_0) \leq \meas{E_{s_0}}  \leq \tau$. Since $\eta < \tau$, we see that 
	\[
	|s_1 - t_0| \leq |s_1 - s_0| + | s_0 - t_0| \leq \tau + \eta < 2 \tau  < \mathrm{dist}(t_0, [a,b] \setminus U).
	\]
	So $s_1 \in U \subseteq (a,b)$, and we must have that $\|v(s_0) - v(s_1)\| = m |s_0 - s_1|$, since the only other way in which $s_1$ could be an endpoint of a component of $E_{s_0}$ would be for it to be an endpoint of $[a,b]$, which we have now excluded.
	
	So we can define $u \in \scra{v(a)}{v(b)}$ by
	\[
	u(t) \df 
	\begin{cases}
	v(t) & t \notin (s_0, s_1), \\
	\textrm{affine} & t \in (s_0, s_1);
	\end{cases}
	\]
	so $u \neq v$, but $u = v$ off the set $(s_0, s_1) \subseteq U$, where $0 \leq s_1-s_0 \leq \tau$.  Moreover,  on $(s_0 , s_1)$ we have that $\|u'\| = m$, and by~\ref{tau3} and~\ref{tau1} that 
	\begin{align*}
	\|v(t) - u(t)\| \leq \|v(t) - v(s_0)\| + \|u(s_0) - u(t)\|
	& \leq \int_{s_0}^{t} \|v'(s)\|\, ds + m (t - s_0) \\
	& \leq \int_{s_0}^{s_1} \|v'(s)\| \, ds + m|s_1 - s_0| \\
	& \leq \delta / 2 + \delta /2.
	\end{align*}
	So $\|v (t)- u(t)\| \leq \delta$ for all $t \in [a,b]$.  So by the choice of $\delta$ as witnessing the continuity of $L$,~\ref{tau4}, and ~\ref{tau2}, we have that
	\begin{align*}
	\left| \mathscr{L}(u) - \mathscr{L}(v) \right| 
	& \leq \int_{s_0}^{s_1} |L(t, u(t), u') - L(t, v(t), u')| \, dt \\
	& \phantom{=} {}+ \int_{s_0}^{s_1} |L(t, v(t), u') - L(t, v (t), v'(t))| \, dt \\
	& \leq \int_{s_0}^{s_1} \epsilon / 2(b-a) \, dt + \int_{s_0}^{s_1} |L (t, v(t), u')|\, dt + \int_{s_0}^{s_1} |L(t, v(t), v'(t))| \, dt \\
	& \leq \epsilon / 2 + |(s_1 - s_0)| \left(\sup_{t \in [a,b], \|p \| \leq m} |L(t, v(t), p)|\right) + \epsilon / 4 \\
	& \leq 3 \epsilon / 4 + \tau \left(\sup_{t \in [a,b], \|p \|\leq m}|L(t, v(t), p)|\right) \\
	& \leq \epsilon,
	\end{align*}
	as required. 
\end{proof}
\begin{Question}
	Let $v \in W^{1,1}(a,b)$.  Does there exist a sequence of non-zero $u_j \in W_0^{1,\infty}(a,b)$ such that $\mathscr{L}(v + u_j ) \to \mathscr{L}(v)$?
	\label{Qu:var}
\end{Question}
Ball and Mizel~\cite{Ball-Mizel-1985} gave examples exhibiting the Lavrentiev phenomenon for which they made the incidental observation that $\mathscr{L}(v + t u) = \infty$ for all $t \neq 0$, for a large class of $u \in C_0^{\infty}(a,b)$, viz those $u$ which are non-zero at a certain  point in the domain (at which the minimizer $v$ is singular).  The Lagrangians are polynomial, superlinear, and satisfy $L_{pp} \geq \epsilon > 0$.  This would seem to suggest that the same could happen for all $u \in C_0^{\infty}(a,b)$ if a minimizer was singular on a dense set.  Indeed this is the case, as we shall shortly show, so the answer to our question, even if $v$ is a minimizer, is ``no''.  The construction is straightforward if we do not concern ourselves with superlinearity and strict convexity; we have to try rather harder to get $L_{pp} > 0$, since in this case partial regularity statements follow given only the mildest assumptions on the modulus of continuity of the Lagrangian~\cite{Clarke-Vinter-1985-regularity,Sychev-1993,Csornyei-etal-2008,Ferriero-2012}.

The following example is not at all difficult but I am not aware of it being presented elsewhere.
\begin{theorem}
	\label{thm:no var}
	There exists $v \in W^{1,1}(0,1)$ and a continuous Lagrangian $L \colon [0,1] \times \mathbb{R} \times \mathbb{R} \to [0, \infty)$, convex in $p$, such that $v$ is a minimizer of $\mathscr{L}$ over $\scra{v(0)}{v(1)}$, $0 \leq \mathscr{L}(v) < \infty$, but $\mathscr{L}(v + u) = \infty$ for all non-zero $u \in W_0^{1,\infty}(0,1)$.
\end{theorem}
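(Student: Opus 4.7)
The plan is to exploit the gap between $L^1$ and $L^2$ integrability: construct $v \in W^{1,1}(0,1)$ whose derivative is globally $L^1$ but whose square is non-integrable on \emph{every} subinterval, then couple it with a Lagrangian that vanishes along the graph of $v$ while amplifying $p^2$ off of it.

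For the first step, enumerate $\mathbb{Q} \cap (0,1) = \{q_n\}_{n=1}^{\infty}$, fix summable $c_n > 0$, and set
\[
v'(s) := \sum_{n=1}^{\infty} \frac{c_n}{\sqrt{|s-q_n|}}, \qquad v(t) := \int_0^t v'(s)\,ds.
\]
Since $\int_0^1 |s-q_n|^{-1/2}\,ds \leq 2$ uniformly in $n$, monotone convergence gives $v' \in L^1(0,1)$, so $v \in W^{1,1}(0,1)$. The decisive property is that for any interval $(a,b) \subseteq (0,1)$ and any $q_n \in (a,b)$, the pointwise bound $(v'(s))^2 \geq c_n^2/|s-q_n|$ forces $\int_a^b (v')^2\,ds = +\infty$.

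For the Lagrangian, I would take $L(t, y, p) := f(y - v(t))\, p^2$, where $f \colon \mathbb{R} \to [0, \infty)$ is any continuous function with $f(0) = 0$ and $f(y) > 0$ for $y \neq 0$ (for instance $f(y) = \min(|y|, 1)$). Continuity of $v$, which is absolutely continuous, makes $L$ continuous on $[0,1] \times \mathbb{R} \times \mathbb{R}$; $L$ is non-negative and a non-negative multiple of $p^2$, hence convex in $p$. Since $\mathscr{L}(v) = \int_0^1 f(0)(v'(t))^2\,dt = 0$ and $L \geq 0$, the function $v$ trivially minimizes $\mathscr{L}$ over $\scra{v(0)}{v(1)}$ with finite energy.

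Finally, let $u \in W_0^{1,\infty}(0,1)$ be non-zero. Continuity of $u$ with $u(0) = u(1) = 0$ yields an interval $(a,b) \subseteq (0,1)$ and $\alpha > 0$ with $|u| \geq \alpha$ on $(a,b)$, whence $f(u) \geq \beta := \inf_{|y| \geq \alpha} f(y) > 0$ there. The elementary bound $(v'+u')^2 \geq \tfrac12(v')^2 - (u')^2$ then gives
\[
\mathscr{L}(v+u) \geq \beta \int_a^b \Bigl[\tfrac{1}{2}(v'(t))^2 - (u'(t))^2\Bigr]\,dt = +\infty,
\]
because $\int_a^b (v')^2 = \infty$ by the first step while $\int (u')^2 < \infty$ for Lipschitz $u$. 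No genuine obstacle arises: the tensor form of $L$ lets the singularities of $v'$ drive the divergence on any Lipschitz perturbation while still vanishing identically along $v$. The real difficulty -- insisting simultaneously on strict convexity and superlinearity in $p$ -- is postponed to Theorem~\ref{thm:no var coercive}.
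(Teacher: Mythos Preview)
Your argument is correct and is a genuinely different, more elementary construction than the paper's. Two tiny slips: the bound $\int_0^1 |s-q_n|^{-1/2}\,ds \leq 2$ should be $\leq 2\sqrt{2}$ (the integral equals $2\sqrt{q_n}+2\sqrt{1-q_n}$), which is harmless; and for a general $f$ the infimum $\inf_{|y|\geq\alpha}f(y)$ need not be positive, but since $u$ is bounded you really only need $\inf_{\alpha\leq|y|\leq\|u\|_\infty}f(y)>0$, which follows from compactness---and your concrete choice $f(y)=\min(|y|,1)$ gives $\beta=\min(\alpha,1)$ directly anyway.

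The paper takes a different route: it builds $v'=1+\sum_n\rho_n$ from stacked indicator functions so that $v'\geq 2^k$ on nested intervals $U_{n,k}$ shrinking to each rational, uses the Lagrangian $L(t,y,p)=(y-v(t))^2 p^8$, and obtains divergence via Jensen's inequality applied to $p\mapsto p^8$ on each $U_{n,k}$. Your proof instead arranges $(v')^2$ to be non-integrable on every subinterval and uses only the quadratic $p^2$ together with the pointwise inequality $(v'+u')^2\geq\tfrac12(v')^2-(u')^2$, avoiding Jensen entirely. This is cleaner for the theorem as stated. What the paper's heavier construction buys is that its $v$ actually lies in $W^{1,2}(0,1)$ (indeed $\sum_n\rho_n$ converges in $L^2$), which is exactly what is needed downstream: in Theorem~\ref{thm:lav no var} one adds $\epsilon p^2$ to the Lagrangian to obtain a superlinear, uniformly convex example still exhibiting the Lavrentiev phenomenon, and this requires $\mathscr{L}(v)=\epsilon\|v'\|_{L^2}^2<\infty$. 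Your $v$ has $v'\notin L^2$ on any interval by design, so it cannot be recycled for that extension---you correctly anticipate this by deferring to Theorem~\ref{thm:no var coercive}.
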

\begin{proof}
	Let $\{x_n\}_{n=0}^{\infty}$ be an enumeration of $\mathbb{Q} \cap [0,1]$.  For $n,k \geq 0$ define $U_{n, k} \df (x_n - 2^{-n - 3k}, x_n + 2^{-n - 3k}) \cap [0,1]$.  For $n \geq 0$ define a non-negative function $\rho_n \in L^1(0,1)$ by
	\[
	\rho_n(t) \df \sum_{k=0}^{\infty}  2^k \mathds{1}_{U_{n, k}}(t).
	\]
	So 
	\[
	\int_0^1 \rho_n(t) \, dt \leq \sum_{k = 0}^{\infty} 2^{k+1} 2^{-3k-n} \leq 2^{-n +2}, 
	\]
	and we can define $\rho \in L^1(0, 1)$ by $\rho \df \sum_{n=0}^{\infty} \rho_n$ and $v \in W^{1,1}(0,1)$ by 
	\[
	v(t) \df \int_0^t (1+ \rho(s)) \, ds.
	\]
	So for all $n, k \geq 0$, for almost every $t \in U_{n, k}$ we have that
	\begin{equation}
	\label{v'big}
	v'(t) = 1 + \rho(t) \geq \rho_n (t) \geq 2^k.
	\end{equation}
	Let $L \colon [0,1] \times \mathbb{R} \times \mathbb{R} \to [0, \infty)$ be given by
	\[
	L(t, y, p) \df (y - v(t))^2 p^8.
	\]
	Then $L$ is continuous, and convex in $p$, and $v$ is clearly a minimizer of $\mathscr{L}$ over $\scra{v(0)}{v(1)}$.
	
	For any non-zero $u \in W_0^{1, \infty}(0,1)$, there exist $n \geq 0$, $K\geq0$, and $\epsilon > 0$ such that $|u| \geq \epsilon$ on $U_{n, k}$ for all $k \geq K$.  Without loss of generality, we may assume that $2^K \geq 2 \|u'\|_{L^{\infty}(0,1)}$. Let $k \geq K$. So we have that $2^k - \|u'\|_{L^{\infty}(0,1)} \geq 2^k - 2^{k-1} = 2^{k-1}$, which, with~\eqref{v'big}, implies that 
	\[
	\int_{U_{n, k}}(v'(t) + u'(t))\, dt \geq \int_{U_{n, k}} \left(2^k - \|u '\|_{L^{\infty}(0,1)}\right) \, dt  \geq 2^{k-1} \meas{U_{n, k}}.
	\]
	So we have, using Jensen's inequality, that
	\begin{align*}
	\mathscr{L}(v+u) & \geq  \int_{U_{n, k}} (u(t))^2(v'(t) + u'(t))^8\, dt \\
	& \geq \epsilon^2 \meas{U_{n, k}} \left(  \frac{1}{\meas{U_{n, k}}}\int_{U_{n, k}}(v'(t) + u'(t)) \, dt\right)^{\! 8}\\
	& \geq \epsilon^2 2^{-n - 3k+1} \cdot 2^{8(k-1)} \\
	& = \epsilon^2 2^{5k - n-7}.
	\end{align*}
	Since this holds for all $k \geq K$, we have that $\mathscr{L}(v + u) = \infty$, as required.
\end{proof}
This example will also serve to demonstrate that the general approximation provided in theorem~\ref{thm:JB} is not of the form $v + u$ for some $u \in W_0^{1,1}(a,b)$ for which $\mathscr{L}(v + \gamma u)$ is finite for the range of values $\gamma \in (0,1]$.  So question~\ref{Qu:JB}, while admitting a positive answer, cannot in general be answered in the manner provided by theorem~\ref{thm:JB} by passing far enough down a sequence of the form $v + j^{-1}u$ for some $u \in W_0^{1,1}(a,b)$. 

Let $L$ and $v$ be as constructed in theorem~\ref{thm:no var}, $U \subseteq (0,1)$ be open and non-empty, and let $u \in \scra{v(0)}{v(1)}$ be as constructed in theorem~\ref{thm:JB} for some $\epsilon > 0$.  Then the key point of the construction of $u$ is that there exists a subinterval $(s_0, s_1)$ of $U$ and a fixed gradient $m$, say, such that $u(t) \neq v(t)$ implies that $t \in (s_0, s_1)$ and $u'(t) = m$, for almost every $t \in (0,1)$.  Define $w \in W_0^{1,1}(0,1)$ by $w (t) \df u(t) - v(t)$, so $v'(t) + w'(t) = m$ for almost every $t \in \{ s \in (0,1) : w(s) \neq 0\} \neq \emptyset$.  Let $\gamma \in (0,1)$.

Then there exist $n, K \geq 0$ and $\delta > 0$ such that $| \gamma w(t)| \geq \delta$ for $t \in U_{n, k} \subseteq (s_0, s_1)$ for all $k \geq K$, where $U_{n, k}$ are as in theorem~\ref{thm:no var}.  Without loss of generality we may choose $K \geq 0$ such that $2^{K} \geq 2|m|/ ( 1-\gamma)$.  Let $k \geq K$.  So $m \leq ( 1- \gamma) 2^{k-1}$ and so
\[
(1- \gamma)2^k - m \geq (1 - \gamma)(2^ k - 2^{k-1}) = (1 - \gamma)2^{k-1}.
\]  
Notice that $v'(t) + \gamma w'(t) = v'(t) + \gamma (m - v'(t)) = ( 1- \gamma) v'(t) + \gamma m$ for almost every $t \in U_{n, k}$, so, by~\eqref{v'big}, we have that
\begin{align*}
\int_{U_{n, k}} (v'(t) + \gamma w'(t)) \, dt  = \int_{U_{n, k}}((1- \gamma) v'(t) + \gamma m ) \, dt 
& \geq \int_{U_{n, k}}( (1 - \gamma) 2^k - \gamma |m|) \, dt \\
& \geq ((1-\gamma) 2^k - |m| )\meas{U_{n, k}}\\
& \geq (1- \gamma)2^{k-1}\meas{U_{n, k}},
\end{align*}
since $\gamma \leq 1$.  Hence we have, using Jensen's inequality, that
\begin{align*}
\mathscr{L}(v + \gamma w) 
& \geq \int_{U_{n, k}} (\gamma w(t))^2 (v'(t) + \gamma w'(t))^8\, dt \\
& \geq \delta^2  \meas{U_{n, k}} \left( \frac{1}{\meas{U_{n, k}}}\int_{U_{n, k}}(v'(t) + \gamma w'(t))\, dt\right)^{\! 8}\\
& \geq \delta^2 2^{-n - 3k+1} \cdot ( 1- \gamma)^8 2^{8(k-1)}\\
& =\delta^2( 1- \gamma)^{8} 2^{5k -n   -7}.
\end{align*}
Since this holds for all $k \geq K$, we have that $\mathscr{L}(v + \gamma w) = \infty$, for all $\gamma \in (0,1)$.

The Lagrangian we have constructed in theorem~\ref{thm:no var}, however, vanishes along the minimizer, and so is not superlinear.   Gratwick and Preiss~\cite{Gratwick-Preiss-2011} show that it is possible to have a continuous, superlinear Lagrangian with $L_{pp} \geq 2 > 0$ for which the minimizer is nowhere locally differentiable.  That minimizer is, however, Lipschitz.  The example of section~\ref{sec:tonelli-cex} is a non-Lipschitz version of this construction, which gives a minimizer which has upper and lower Dini derivatives of $\pm \infty$ at every point of a dense set.
\begin{theorem}
	\label{thm:no var coercive}
	There exist $T> 0$, $w \in W^{1,2}(-T, T)$, and a continuous Lagrangian $L \colon [-T, T] \times \mathbb{R} \times \mathbb{R} \to [0, \infty)$, superlinear and satisfying $L_{pp} \geq 2> 0$, such that $w$ is a minimizer of $\mathscr{L}$ over $\scra{w(-T)}{w(T)}$, $0 \leq \mathscr{L}(w) < \infty$, but $\mathscr{L}(w + u) = \infty$ for all non-zero  $u \in W_0^{1,\infty}(-T, T)$.
\end{theorem}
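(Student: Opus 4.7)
The plan is to reuse the minimizer $w\in W^{1,2}(-T,T)$ and the weight function $\phi$ constructed in Section~\ref{sec:tonelli-cex} for Theorem~\ref{thm:thesis}, and to augment the Lagrangian $\phi(t,y-w(t))+p^2$ by a non-negative summand that vanishes identically on the graph of $w$ but grows extremely rapidly in $p$. Concretely, set
\[
L(t,y,p) \mathrel{\mathop:}= \phi(t,y-w(t)) + p^2 + (y-w(t))^2 H(p),
\]
where $H\colon\mathbb{R}\to[0,\infty)$ is an even, smooth, convex function of iterated-exponential growth, say $H(p)=\exp\exp(Cp^2)$ for a constant $C>0$ to be chosen.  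This $L$ is continuous in $(t,y,p)$, convex in $p$, satisfies $L_{pp}\geq 2$ from the $p^2$ term alone, and is superlinear.

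Since the added summand vanishes on the graph of $w$, $\mathscr{L}(w)=\int_{-T}^T (w'(t))^2\,dt<\infty$; and for any competitor $v\in\scra{w(-T)}{w(T)}$,
\[
\mathscr{L}(v)\geq \int_{-T}^T\bigl(\phi(t,v-w)+(v')^2\bigr)\,dt \geq \mathscr{L}(w),
\]
by the minimality of $w$ for the functional of Theorem~\ref{thm:thesis}; hence $w$ remains a minimizer.

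For the divergence claim, let $u\in W_0^{1,\infty}(-T,T)$ be non-zero.  Continuity of $u$ and density of $\{x_n\}_{n=0}^\infty$ in $(-T,T)$ force $u(x_n)\neq 0$ for some $n$; pick $\delta\in(0,\tau_n)$ with $|u(t)|\geq c\mathrel{\mathop:}=|u(x_n)|/2$ on $[x_n-\delta,x_n+\delta]$.  On the set $I\mathrel{\mathop:}=[x_n-\delta,x_n+\delta]\setminus\bigcup_{m>n}Y_m$, the combination of~\ref{wn=twn} and~\ref{wn=wn-1} (applied iteratively and passed to the limit) gives $w(t)=\tilde w(t-x_n)+\rho_n$ and hence $w'(t)=\tilde w'(t-x_n)$.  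By the explicit formula~\eqref{lamda} together with~\eqref{|tw'|}, on
\[
I\cap S, \quad \text{where } S\mathrel{\mathop:}=\bigl\{\,t : |\sin\log\log\log 1/|t-x_n||\geq 1/2\,\bigr\},
\]
one has $|w'(t)|\geq\tfrac12\log\log 1/|t-x_n|-O(1)$.  Shrinking $\delta$ to absorb $\|u'\|_\infty$ gives $|(w+u)'(t)|\geq\tfrac14\log\log 1/|t-x_n|$ on $I\cap S$, whence
\[
\mathscr{L}(w+u)\geq c^2\int_{I\cap S} H\bigl(\tfrac14\log\log 1/|t-x_n|\bigr)\,dt.
\]
The substitution $s=\log\log\log 1/|t-x_n|$, under which $|dt/ds|=e^{s+e^s-e^{e^s}}$, converts this to an integral of $\exp\exp\bigl(\tfrac{C}{16}e^{2s}\bigr)\cdot e^{s+e^s-e^{e^s}}$ over a subset of $s$-space of density $2/3$ (the preimage of $\{|\sin s|\geq 1/2\}$).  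Since $(C/16)e^{2s}\gg e^s$ for large $s$, the outer double exponential dominates $e^{e^s}$, the integrand tends to $+\infty$ pointwise, and the integral diverges; giving $\mathscr L(w+u)=+\infty$.

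The main obstacle is this final asymptotic: $H$ must grow in $p$ fast enough to beat the extremely rapid decay of $|dt/ds|$ as $s\to\infty$, and one must verify that the ``bad'' set $\bigcup_{m>n}Y_m$ does not swallow $I\cap S$ near $x_n$---this follows from the rapid shrinkage of the $T_m$ encoded in~\ref{t1} and~\eqref{iv}.  A conceptually simpler but technically much more demanding alternative would be to modify the profile $\tilde w$ so that its derivative blows up polynomially, of the form $|t|^{-\beta}$ for some $\beta\in(0,1/2)$, placing $w\in W^{1,2}\setminus W^{1,1/\beta}$ and permitting the use of a polynomial $H(p)=p^{2k}$ with $2k\geq 1/\beta$; but this would require re-running the delicate minimality argument of Section~\ref{sec:tonelli-cex} from scratch for the modified oscillation profile.
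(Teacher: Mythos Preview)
Your structural idea is the paper's: both augment the Lagrangian of Theorem~\ref{thm:thesis} by a term $(y-w(t))^2\,\Omega(p)$ with $\Omega$ even, convex and non-negative, so that continuity, superlinearity, $L_{pp}\geq 2$ and minimality of $w$ are automatic. The divergence arguments differ, however, and the paper's is much shorter. Rather than working pointwise with $w'$, the paper chooses a sequence $t_k\downarrow 0$ with $\tilde w(t_k)/t_k\geq 2k+1$, uses~\eqref{diffquot} to get $(w(x_n+t_k)-w(x_n))/t_k\geq 2k$ for all large $k$, and then applies Jensen's inequality once:
\[
\int_{x_n}^{x_n+t_k}(u)^2\,\omega(w'+u')\;\geq\;\epsilon^2\, t_k\,\omega\!\Bigl(\tfrac{1}{t_k}\!\int_{x_n}^{x_n+t_k}(w'+u')\Bigr)\;\geq\;\epsilon^2\, t_k\,\omega(k)\;\geq\;\epsilon^2 k,
\]
with $\omega$ constructed so that $\omega(k)\geq k/t_k$. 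Only the \emph{integral} of $w'+u'$ over $[x_n,x_n+t_k]$ enters, so the later modifications on $\bigcup_{m>n}Y_m$ are irrelevant and no iterated-exponential growth is needed in $\Omega$.

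Your pointwise route can be completed, but the step you yourself flag as the ``main obstacle'' is a genuine gap as written: you assert that $\bigcup_{m>n}Y_m$ does not swallow $I\cap S$ near $x_n$, citing~\ref{t1} and~\eqref{iv}, without actually proving it. A bound that closes it: from~\ref{t1} one has $T_m\leq\sigma_m T_{m-1}/2$, and since $|x_m-x_n|\geq 2\sigma_m$ for $m>n$ one gets $T_m/\mathrm{dist}(Y_m,x_n)\leq T_m/\sigma_m\leq T_{m-1}/2\leq 2^{-m}$ by~\eqref{iv}; hence under the substitution $s=\log\log\log(1/|t-x_n|)$ each $Y_m$ pulls back to an $s$-interval of length at most a constant times $2^{-m}$, the total removed $s$-measure is finite, $\{|\sin s|\geq 1/2\}$ minus this set still has infinite $s$-measure, and since your integrand tends to $+\infty$ the integral diverges. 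You should either supply this estimate explicitly, or switch to the paper's Jensen/difference-quotient argument, which eliminates the change of variables, the bad-set analysis, and the need for double-exponential $H$ all at once.
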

\begin{proof}
	We let $T$, $w$, and $\phi$ be as from theorem~\ref{thm:thesis}, the notation of which we retain.  We have to add another term to that Lagrangian.
	For $k \geq 0$ we choose a decreasing sequence of numbers $t_k \in (0, T)$ such that
	\begin{equation*}
	\frac{\tilde{w}(t_k)}{t_k} \geq 2k + 1
	,
	\end{equation*}
	and recall that~\eqref{diffquot} implies that 
	\begin{equation}
	\label{wsteep}
	\frac{w(x_n + t_k) - w(x_n)}{t_k} \geq \frac{\tilde{w}(t_k)}{t_k} - 1 \geq 2k,
	\end{equation}
	for all $n \geq 0$ and $k \geq 0$ large enough, depending on $n$.  Define a convex and superlinear function $\omega \colon \mathbb{R} \to [0, \infty)$ as follows.  Set $\omega(0) \df 0$ and $\omega(1) \df t_1^{-1}$.  Suppose $\omega(l)$ has been defined such that $\omega(l) \geq 2 \omega(l-1)$ for each $1 \leq l \leq k-1$ for $k \geq 2$. Define
	\[
	\omega(k) \df \max\{2 \omega(k-1), k t_k^{-1}\}.
	\]
	This defines $\omega(k)$ for all $k \geq 0$, and we then define $\omega$  to be affine  between the specified endpoints on each interval $[k, k+1]$.  Define $\omega(p) \df \omega(-p)$ for $p \in (-\infty, 0)$.  
	Define $L \colon [-T, T] \times \mathbb{R} \times \mathbb{R} \to [0, \infty)$ by
	\[
	L(t, y, p) \df \phi(t, y - w(t)) + p^2 +  ( y - w(t))^2 \omega(p),
	\]
	which is continuous, superlinear, with $L_{pp} \geq 2 >0$, for which by theorem~\ref{thm:thesis}, $w$ is a minimizer of $\mathscr{L}$ over $\scra{w(-T)}{w(T)}$.  
	
	Let $u \in W_0^{1, \infty}(-T, T)$ be non-zero.  So there exist $n \geq 0$ and $\epsilon > 0$ such that $|u| \geq \epsilon$ on a neighbourhood of $x_n$.  Choosing $K \geq \|u'\|_{L^{\infty}(-T, T)}$, we have for all large $k \geq K$, by~\eqref{wsteep}, that
	\begin{align*}
	\int_{x_n}^{x_n + t_{k}} ( w'(t) + u'(t)) \, dt 
	& \geq  w(x_n + t_k) - w(x_n) - \|u'\|_{L^{\infty}(-T, T)}t_k \\
	& \geq ( 2k - \|u'\|_{L^{\infty}(-T, T)})t_k \\
	& \geq k t_k.
	\end{align*}
	So, using Jensen's inequality and that $\omega$ is non-decreasing on $[0, \infty)$, we have that
	\begin{align*}
	\int_{x_n}^{x_n + t_k} L(t, w+  u, w' + u') \, dt
	& \geq \int_{x_n}^{x_n + t_k} (u(t))^2 \omega (w'(t) + u'(t))\, dt  \\
	& \geq \epsilon^2 t_k\, \omega \! \left( (t_k)^{-1} \int_{x_n}^{x_n + t_k}( w'(t) + u'(t))\, dt\right)\\
	& \geq \epsilon^2 t_k \omega(k) \\
	& \geq \epsilon^2 k,
	\end{align*} 
	where the final inequality follows by the definition of $\omega$.  Since this holds for all large $k \geq K$, $\mathscr{L}( w+ u) = \infty$, as required.
\end{proof}
We might now  speculate whether in any one given problem, it is always possible to approximate the minimum value {\it either} by Lipschitz trajectories {\it or} by adding Lipschitz trajectories to the minimizer.  We know that neither approach alone succeeds in general, but is it possible that both fail simultaneously?
\begin{Question}
	\label{Qu:Lav or var}
	Let $v \in W^{1,1}(a,b)$ be a minimizer of $\mathscr{L}$ over $\scra{v(a)}{v(b)}$.  Suppose that the Lavrentiev phenomenon occurs.  Does there exist a sequence of non-zero $u_j \in W_0^{1, \infty}(a,b)$ such that $\mathscr{L}(v + u_j) \to \mathscr{L}(v)$?
\end{Question}
There seems to be very little reason to think this might be true: inferring a positive approximation result from the presence of the Lavrentiev phenomenon seems eccentric.  (In contrast, the principle of gaining information about the minimizer assuming \textit{non-}occurrence of the Lavrentiev phenomenon is used, for example, by Esposito, Leonetti, and Mingione~\cite{Esposito-Leonetti-Mingione-2004}.)  To show it to be false, we just need to show that the Lavrentiev phenomenon occurs in (a modified version of) one of the examples from theorems~\ref{thm:no var} or~\ref{thm:no var coercive}.
\begin{theorem}
	\label{thm:lav no var}
	There exists $v \in W^{1,1}(0,1)$ and a continuous Lagrangian $L \colon [0,1] \times \mathbb{R} \times \mathbb{R} \to [0, \infty)$, convex in $p$, such that $v$ is a minimizer of $\mathscr{L}$ over $\scra{v(0)}{v(1)}$, $0 \leq \mathscr{L}(v) < \infty$, but $\mathscr{L}(v + u) = \infty$ for all non-zero $u \in W_0^{1,\infty}(0,1)$.  Moreover, the Lavrentiev phenomenon occurs.
\end{theorem}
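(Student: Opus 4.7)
My plan is to take the construction of \cref{thm:no var} and reinforce it so that the minimizer also displays a Mani\`{a}-type singularity, which will force the Lavrentiev phenomenon to occur in addition to the existing non-variation property.  Concretely, I would replace the minimizer $v$ of \cref{thm:no var} with a strictly increasing $v \in W^{1,1}(0,1)$ satisfying $v(0)=0$, $v(1)=1$, bounded below by some multiple of $t^{1/3}$ on $[0,1]$, and still having $v'$ unbounded on a dense sequence of intervals $U_{n,k}$ of the form used in the proof of \cref{thm:no var}.  A convenient explicit choice is
\[
v(t) = A t^{1/3} + B \int_0^t \rho(s)\, ds,
\]
with $\rho \ge 0$ as in the proof of \cref{thm:no var} and the constants $A, B > 0$ normalized so that $v(1) = 1$.

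For the Lagrangian I would take the sum of two non-negative, continuous, $p$-convex summands, each vanishing along the graph of $v$:
\[
L(t, y, p) = (y - v(t))^{2}\, p^{8} + (y^{3} - v(t)^{3})^{2}\, p^{6}.
\]
Non-negativity together with strict monotonicity of $v$ makes $v$ the unique minimizer in $\scra{0}{1}$, with $\mathscr{L}(v) = 0 < \infty$.  The first summand is of exactly the form treated in \cref{thm:no var}, and the same argument applies verbatim to show $\mathscr{L}(v + u) = \infty$ for every non-zero $u \in W_0^{1,\infty}(0,1)$: choosing $n$ with $|u| \geq \varepsilon$ near $x_n$, the integrand over each $U_{n, k}$ with large $k$ grows at the rate $\varepsilon^{2} (B \rho)^{8} \gtrsim 2^{5k - n}$, which sums to $+\infty$.

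For the Lavrentiev phenomenon I would show the existence of a universal $c > 0$ such that $\mathscr{L}(u) \geq c$ for every Lipschitz $u$ with $u(0) = 0$, $u(1) = 1$.  Since $u$ is Lipschitz and $v \geq A t^{1/3}$, we have $u(t)^{3} < v(t)^{3}/2$ for all sufficiently small $t > 0$, while $u(1)^{3} = 1 > v(1)^{3}/2$; hence the intermediate value theorem yields a first $\tau \in (0, 1]$ with $u(\tau)^{3} = v(\tau)^{3}/2$.  On $[0, \tau]$ we have $(u^{3} - v^{3})^{2} \geq v^{6}/4$, and so
\[
\mathscr{L}(u) \geq \frac{1}{4} \int_{0}^{\tau} v^{6} (u')^{6}\, dt.
\]
Applying H\"older's inequality with exponents $6$ and $6/5$ to the factorization $u(\tau) = \int_{0}^{\tau} (u' v)\, v^{-1}\, dt$, and using the bound $v^{-6/5} \leq A^{-6/5} t^{-2/5}$, we obtain $u(\tau)^{6} \leq C \tau^{3} \int_{0}^{\tau} v^{6} (u')^{6}\, dt$.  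Inserting $u(\tau) = v(\tau)/2^{1/3} \geq A \cdot 2^{-1/3} \tau^{1/3}$ then yields a bound of the form $\int_{0}^{\tau} v^{6} (u')^{6}\, dt \geq c'/\tau \geq c'$ for $\tau \leq 1$, a positive constant independent of $u$.

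The main technical obstacle is not any single estimate but the simultaneous design of $v$: it must carry both the $t^{1/3}$ lower envelope (on which the Lavrentiev computation crucially depends) and the dense sequence of derivative spikes (on which the non-variation argument depends).  The additive form given above handles this cleanly, although some bookkeeping is required, notably to keep the points $x_n$ bounded away from $0$ so that the two singular mechanisms---the endpoint $t^{1/3}$ singularity of $v'$ and the localized $U_{n,k}$ spikes---do not collide, and to check that the choice of $A, B$ is consistent with all the lower bounds used in both parts of the argument.
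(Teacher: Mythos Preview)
Your approach is correct, but it is more elaborate than needed and differs from the paper's proof in an instructive way.

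The paper makes no modification whatsoever to the example of \cref{thm:no var}. Taking $x_0=0$, the function $v$ constructed there already satisfies $v(t)\geq 2^k t$ on $U_{0,k}$, so in effect $v'(0)=+\infty$; a Mani\`a-type estimate applied directly to the single term $(y-v)^2 p^8$ (comparing $u$ with $v/4$ and $v/2$ near the origin and tracking the dyadic scale $k$ at which the crossing occurs) then yields a uniform lower bound $\mathscr{L}(u)\geq 2^{-56}$ for every Lipschitz competitor. No auxiliary $t^{1/3}$ component in $v$ and no second summand in $L$ are required: the spike family $\{U_{0,k}\}$ at the endpoint already supplies the vertical tangent that drives the Lavrentiev gap.

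Your route---grafting $At^{1/3}$ onto $v$ and adding a separate Mani\`a term $(y^3-v^3)^2 p^6$ to $L$---also works, and the H\"older computation you outline goes through cleanly. What it buys is modularity (one summand per conclusion), at the price of a longer construction and some normalization. One caution, however: your remark about keeping the $x_n$ ``bounded away from $0$'' is both unnecessary and, if taken literally as a uniform bound $\inf_n x_n>0$, actively harmful. The two singular mechanisms do not interfere---the $t^{1/3}$ term only enlarges $v'$, which helps the Jensen estimate on $U_{n,k}$---whereas a uniform lower bound on the $x_n$ would destroy density of $\{x_n\}$ in $(0,1)$ and hence the no-variation conclusion for perturbations $u$ supported near $0$. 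Simply take $\{x_n\}$ dense in $(0,1)$ and proceed; no bookkeeping of this kind is required.
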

\begin{proof}
	We show that the example from theorem~\ref{thm:no var} exhibits the Lavrentiev phenomenon.  The argument is based on the presentation given in~\cite{Buttazzo-Giaquinta-Hildebrandt} of the example given by Mani\`a~\cite{Mania-1934}.  We borrow our notation from the proof of theorem~\ref{thm:no var}. Without loss of generality $x_0 = 0$.
	
	Let $u \in W^{1, \infty}(0,1) \cap \scra{v(0)}{v(1)}$.  The definition of $v$ implies that for $t \in U_{0, k}$, $v(t) \geq t \rho_0 (t) \geq 2^k$ and so, since as $t \to 0$ we may choose $k \to \infty$, and $v(0) = 0$, we see that $v'(0) = \infty$.  Since $u(0) = v(0) = 0$ and $u$ is Lipschitz, we must have that $u < v/4$ on a right neighbourhood of $0$.  Since also $u(1) = v(1)$, by the intermediate value theorem, $\{ t \in (0,1) : u(t) = v(t) / 4\} \neq \emptyset$.  Define 
	\[
	\tau_1 \df \sup \{ t \in (0,1): u(t) = v(t) / 4\} < 1.
	\]
	Similarly $\{ t \in (\tau_1 , 1) : u(t) = v(t) / 2\} \neq \emptyset$, so we may define 
	\[
	\tau_2 \df \min \left\{ 2 \tau_1, \inf \{ t \in (\tau_1, 1) : u(t) = v(t) / 2\} \right\}.
	\]
	Choose $k \geq 0$ such that $2^{-3k - 3} \leq \tau_1 \leq 2^{-3k}$, so $\tau_1 \in U_{0,k}$.  Then for $t \in (\tau_1, \tau_2)$, we have by definition of $\tau_2$, monotonicity of $v$, and~\eqref{v'big} that
	\[
	v(t) - u(t) \geq v(t) - v(t)/2 = v(t) /2 \geq v(\tau_1)/2 \geq 2^{k - 1}\tau_1 \geq 2^{k-1} \cdot 2^{-3k - 3} = 2^{-2k - 4},
	\]
	so $(v(t) - u(t))^2 \geq 2^{-4k - 8}$.
	
	If $\tau_2 = 2 \tau_1$, then $\tau_1, \tau_2 \in U_{0,k-1}$ if $k \geq 1$, and so $v' \geq 2^{k-1}$ almost everywhere on $(\tau_1, \tau_2)$.  If $k = 0$, then all we may say is that $\tau_1, \tau_2 \in U_{0, 0} = [0,1)$, on which $v' \geq 2$ by definition of $v$, since $\rho \geq 1$ everywhere.  So in general we may say that $v' \geq 2^{k-1}$ almost everywhere on $(\tau_1, \tau_2)$.  Hence, by definition of $\tau_1$, 
	\begin{align*}
	u(\tau_2) - u(\tau_1) \geq v(\tau_2)/4 - v( \tau_1 ) / 4= 2^{-2}(v(2 \tau_1) - v( \tau_1)) 
	& \geq 2^{-2} \cdot 2^{k-1} \tau_1 \\
	& \geq 2^{k -3} \cdot 2^{-3k -3} \\
	& = 2^{-2k - 6}.
	\end{align*}
	
	Otherwise, we have by the definitions of $\tau_1$ and $\tau_2$, the monotonicity of $v$, and~\eqref{v'big} that
	\[
	u(\tau_2) - u(\tau_1) = v(\tau_2) / 2- v( \tau_1) / 4 \geq v(\tau_1) / 4\geq 2^{-2} \cdot 2^k \tau_1 \geq 2^{-2 + k} \cdot 2^{-3k - 3} = 2^{-2k -5}.
	\]
	
	Hence in both cases we have that $(u(\tau_2) - u(\tau_1))^8 \geq 2^{-16k - 48}$.  So, using Jensen's inequality, we see, since $(\tau_2 - \tau_1)^{-1} \geq \tau_1^{-1} \geq 2^{3k}$ by definition of $\tau_2$, that 
	\begin{align*}
	\mathscr{L}(u) \geq \int_{\tau_1}^{\tau_2} ( u(t) - v(t))^2 (u'(t))^8 \, dt 
	& \geq 2^{-4k - 8} (\tau_ 2 - \tau_1 ) \left( (\tau_2 - \tau_1)^{-1}\int_{\tau_1}^{\tau_2} u'(t)\, dt \right)^{\! 8}\\
	& \geq 2^{-4k - 8} (\tau_2 - \tau_1)^{-7} 2^{-16k - 48} \\
	& \geq 2^{-20k - 56} \cdot 2^{21k}\\
	& \geq 2^{-56}.
	\end{align*}
	Since this number is independent of $k$ and therefore of $u$, we see that the Lavrentiev phenomenon occurs, as claimed.
\end{proof}
The Lagrangian in theorem~\ref{thm:lav no var} can be adapted to have $L_{pp} \geq \epsilon > 0$ and be superlinear, while still exhibiting the Lavrentiev phenomenon.  Notice that 
\[
\int_0^1{\rho_n^2} \leq \sum_{k=0}^{\infty} 2^{2k} \meas{U_{n, k}} \leq \sum_{k=0}^{\infty} 2^{2k} \cdot 2^{-3k - n+1} = 2^{-n+2},
\]
and therefore that $\sum_{n = 0}^{\infty} \rho_n$ converges in $L^2(0, 1)$, thus $v \in W^{1, 2}(0, 1)$. Following~\cite{Buttazzo-Giaquinta-Hildebrandt} (see p.~148), we set
\[
\tilde{L}(t, y, p) = (y - v(t))^2 p^8 + \epsilon p^2,
\]
for some $0 < \epsilon < 2^{-56} \|v'\|_{L^2(0,1)}^{-2}$.  Then $\tilde{L}_{pp} \geq 2 \epsilon > 0$ and $\tilde{L}$ is superlinear, and moreover, letting $\tilde{\mathscr{L}}$ denote the corresponding functional, 
\[
\inf_{w \in \scra{v(0)}{v(1)}} \tilde{\mathscr{L}}(w) \leq \tilde{\mathscr{L}}(v) = \epsilon \|v'\|_{L^2(0, 1)}^2 < 2^{-56} \leq \tilde{\mathscr{L}}(u),
\]
for all $u \in W^{1, \infty}(0, 1) \cap \scra{v(0)}{v(1)}$, so the Lavrentiev phenomenon persists.  However, we lose the easy observation that $v$ is a minimizer, and the  result about Lipschitz variations is no longer clear.  

The example of theorem~\ref{thm:no var coercive} sadly rather readily fails to exhibit the Lavrentiev phenomenon: consider following the near-minimizer $w_n$ everywhere except on small intervals around its singularities, on which one just remains constant until one can pick up the minimizer on the other side of the singularity (this argument is made precise by Gratwick~\cite{Gratwick-2011}).  However, it can be modified, as suggested by the standard computations involved in Mani\`a's example, into an example which does exhibit the Lavrentiev phenomenon, by adding a non-decreasing trajectory with a vertical tangent at $0$.
\begin{theorem}
	\label{thm:lav no var coercive}
	There exist $T> 0$, $w \in W^{1,2}(0,T)$, and a continuous Lagrangian $L \colon [0,T] \times \mathbb{R} \times \mathbb{R} \to [0, \infty)$, superlinear in $p$ and with $L_{pp} \geq 2 > 0$, such that $w$ is a minimizer of $\mathscr{L}$ over $\scra{w(0)}{w(T)}$, $0 \leq \mathscr{L}(w) < \infty$, but $\mathscr{L}(w + u) = \infty$ for all non-zero $u \in W_0^{1,\infty}(0,T)$.  Moreover, the Lavrentiev phenomenon occurs.
\end{theorem}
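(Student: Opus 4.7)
The strategy is to combine the construction of~\cref{thm:no var coercive} with a Mani\`a-style ramp at the left endpoint $0$, echoing the way~\cref{thm:lav no var} was obtained from~\cref{thm:no var}. First I take $T>0$ and $\hat w\in W^{1,2}(0,T)$ produced by the construction of~\cref{thm:thesis}, re-run so that the dense sequence $\{x_n\}_{n\ge 0}$ lies in $(\eta,T)$ for some $\eta>0$; then $\hat w$ is $C^\infty$ on $[0,\eta]$ but retains dense Dini-$\pm\infty$ singularities in $(\eta,T)$. Writing $\hat\phi,\hat\omega$ for the weight function and superlinear coefficient from~\cref{thm:no var coercive}, $\hat w$ minimizes the corresponding Lagrangian with all the properties listed there.

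Next, fix $\alpha\in(1/2,1)$ and a small $c>0$, set $h(t)=ct^\alpha$, and define the candidate minimizer $w=\hat w+h\in W^{1,2}(0,T)$. Then $w(0)=0$, $w'(0^+)=+\infty$, and the Dini-$\pm\infty$ singularities of $\hat w$ on $(\eta,T)$ are inherited by $w$. Set
\[
L(t,y,p):=\hat\phi(t,y-w(t))+p^2+(y-w(t))^2\hat\omega(p)+(F(y-\hat w(t))-t)^2 p^{2N},
\]
where $F(s):=c^{-1/\alpha}\mathrm{sgn}(s)|s|^{1/\alpha}$ so that $F(h(t))=t$, and $N$ is a large integer to be fixed. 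Then $L$ is continuous, convex in $p$, superlinear, and $L_{pp}\ge 2$; every summand other than $p^2$ vanishes along $w$, so $0\le \mathscr{L}(w)=\int_0^{T}(w'(t))^2\,dt<\infty$.

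The proof splits into three pieces. (i) \emph{Minimality}: the Mani\`a-like summand is non-negative and zero at $w$, so it only penalises competitors; the remaining Lagrangian is that of~\cref{thm:no var coercive} with $\hat w$ replaced by $w$. Re-running the inductive weight-function construction of~\cref{thm:thesis} with $w$ in place of $\hat w$, and using that $\hat w$ is smooth on $[0,\eta]$ so that the contribution of $h''(t)=c\alpha(\alpha-1)t^{\alpha-2}$ near $0$ can be absorbed into $\hat\phi$ on this interval by a standard Mani\`a-type calculation, delivers $w$ as the unique minimizer. (ii) \emph{Lipschitz variations}: for any non-zero $u\in W_0^{1,\infty}(0,T)$ choose a singular point $x_n\in(\eta,T)$ on which $|u|\ge\varepsilon$; since $\overline{D}w(x_n)=\overline{D}\hat w(x_n)=+\infty$, the argument of~\cref{thm:no var coercive} applied verbatim to the $(y-w)^2\hat\omega(p)$ summand gives $\mathscr{L}(w+u)=+\infty$, and the non-negative Mani\`a summand only helps. (iii) \emph{Lavrentiev}: for Lipschitz $u\in\scra{w(0)}{w(T)}$, the vertical tangent of $h$ at $0$ and the bound $|\hat w(t)|=o(h(t))$ force $u(t)\le Kt\ll w(t)/2$ on a right neighbourhood of $0$; introducing the Mani\`a thresholds $\tau_1=\sup\{t:u(t)=w(t)/4\}$ and $\tau_2=\min\{2\tau_1,\inf\{t>\tau_1:u(t)=w(t)/2\}\}$, one checks $|F(u-\hat w)-t|\gtrsim t$ on $(\tau_1,\tau_2)$ and, by Jensen on an interval of length $\le 2\tau_1$ across which $u$ must climb by $\gtrsim \tau_1^\alpha$, obtains $\int_{\tau_1}^{\tau_2}(u')^{2N}\,dt\gtrsim \tau_1^{2N(\alpha-1)+1}$. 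Choosing $N>1/(1-\alpha)$ then makes $\int_{\tau_1}^{\tau_2}(F(u-\hat w)-t)^2(u')^{2N}\,dt\ge c_0>0$ uniformly in $u$.

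The main obstacle is step~(i): re-running the delicate inductive construction of~\cref{thm:thesis} in the presence of the non-smooth-at-$0$ function $h$. The inductive bound $\hat\phi(t,y)\ge 2|w''(t)||y|$ on the expanding convex-hull region around $w$ must survive the insertion of $h''$, which blows up faster than $\hat w''$ near $0$; the cleanest route is to confine the Mani\`a ramp to $[0,\eta]$ where $\hat w$ is smooth, so that the $h$-contribution can be handled by a self-contained Mani\`a-type computation without disturbing the inductive scheme governing the oscillatory singularities on $(\eta,T)$.
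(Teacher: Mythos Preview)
Your approach is in the right spirit---graft a vertical-tangent ramp onto the minimizer from~\cref{thm:no var coercive} and run Mani\`a-style estimates at the left endpoint---but both step~(i) and step~(iii) have genuine gaps, and the paper's proof closes them by rather different means.

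\textbf{Step (i).} ``Re-running the inductive construction of~\cref{thm:thesis} with $w$ in place of $\hat w$'' is not a triviality you can wave away. The whole machinery of Section~\ref{sec:tonelli-cex} is engineered so that $\hat\phi(t,y)\ge 2|\hat w''(t)||y|$ on a region whose width times $|\hat w''|$ tends to $0$. Adding $h(t)=ct^{\alpha}$ introduces $h''(t)\sim t^{\alpha-2}$, which blows up at $0$; since in your setup $0\notin Y_n$ for any $n$, the existing $\hat\phi$ is \emph{constant} in $t$ near $0$ and cannot ``absorb'' the $h''$ contribution. You would have to add a \emph{new} continuous weight to $L$ dominating $2|h''||y|$ on a shrinking neighbourhood of the graph of $w$, and then prove minimality of $w$ for the enlarged problem. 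This is precisely what the paper does, but it avoids any re-run of the inductive construction by choosing the ramp to be $3g$ (with $g(t)=t\log\log(1/t)$, already central to Section~\ref{sec:tonelli-cex}), adding the single weight $\Phi(t,y)$ with $|\Phi|\le 42|g''g|\to 0$, and then proving minimality of $w+3g$ in two strokes: the old minimality of $w$ handles the $\phi$ and $(u')^2-(w')^2$ terms, while the cross term $6g'(u'-w')$ is integrated by parts and absorbed by $\Phi$ once one shows $|u-w|\le 6g$ via the affine-replacement trick of~\cref{lemma1}. No inductive rebuild is needed.

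\textbf{Step (iii).} Your thresholds $\tau_1,\tau_2$ are defined via a supremum and can land in the oscillatory region $(\eta,T)$, where $\hat w$ is neither small nor of fixed sign; there the estimate $|F(u-\hat w)-t|\gtrsim t$ is unjustified, and indeed $w=\hat w+h$ need not even be positive, so the thresholds $w/4$, $w/2$ become meaningless. The paper sidesteps this by using thresholds relative to $g$ (not $w$), and, more importantly, by replacing your fixed-exponent Mani\`a summand $(\,\cdot\,)^2 p^{2N}$ with a \emph{tailored} convex superlinear function $\Theta$ (built exactly like $\omega$) chosen so large on each level set $\{g'\ge k+1\}$ that the Mani\`a lower bound is uniform in the location of $\tau_1$. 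Thus the Lavrentiev gap is obtained without any control on where $\tau_1$ falls.
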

\begin{proof}
	We adapt the example from theorem~\ref{thm:no var coercive}, and borrow the notation from that proof.  We consider only the interval $[0, T]$, and observe that the function $w$ from this example is a minimizer of $\mathscr{L}$ on $[0,T]$ over $\scra{w(0)}{w(T)}$, and note that $w(0) = w(x_0) = w_0(x_0) = \tilde{w}(0) = 0$ by~\ref{wfixxi} and the definition of $\tilde{w}$.
	
	From~\ref{lipw} and~\ref{wfixxi} we know that 
	\begin{equation}
	\label{wfrom0}
	|w(t)| = |w(t) - w(0)| \leq 2 g(t),
	\end{equation}
	for all $t \in [0, T]$.  So $w(t) + 3 g(t) \geq g(t)$.  This ``$3g$-centred'' version of $w$ will be our new minimizer with respect to its own boundary conditions $w(0) + 3g(0) = 0$ and $w(T) + 3g(T)$.  We modify our Lagrangian from theorem~\ref{thm:no var coercive} to construct a problem which this new function minimizes; to do this we need to add a new weight function containing a term in $g''$.  
	
	Let $\Phi \colon [0, T] \times \mathbb{R} \to [0, \infty)$ be given by
	\[
	\Phi(t, y) \df 
	\begin{cases}
	0 & t= 0, \\
	7|g''(t)| |y| & t \neq 0,\ |y| \leq 6 g(t), \\
	42|g''(t)| g(t) & t \neq 0,\ |y| > 6 g(t).
	\end{cases}
	\]
	Now,
	\[
	g''(t) = \frac{-1}{t \one 1/t} \left(\frac{1}{\one 1/t} + 1\right),
	\]
	so recalling that $T> 0$ was chosen small enough that $\one 1/t \geq 1$ on $(0,T)$, we see that
	\begin{equation}
	\label{gg''}
	|g''(t)g(t)| \leq \frac{2 t \two 1/t}{t \one 1/t} = \frac{2 \two 1/t}{\one 1/t} \to 0\ \textrm{as}\ t \to 0,
	\end{equation}
	so $\Phi$ is continuous.  Now define $F \colon [0, T] \times \mathbb{R} \times \mathbb{R} \to [0, \infty)$ by 
	\[
	F(t, y, p) \df \phi(t, y - (w + 3g)) + \Phi (t, y - (w + 3g)) + p^2,
	\]
	and consider the corresponding functional $\mathscr{F} (u) \df \int_0^T F(t, u, u')$ defined for $u \in W^{1, 1}(0, T)$. 
	We claim that $w + 3g$ is a minimizer over $\scra{0}{(w + 3g)(T)}$ of  $\mathscr{F}$.  Let $u \in \scra{0}{w(T)}$ be such that $u + 3g$ is a minimizer of $\mathscr{F}$ over $\scra{0}{(w + 3g)(T)}$.  
	
	First we claim that $|u(t)| \leq 4g(t)$ on $[0,T]$. This is the same strategy of proof as found in lemma~\ref{lemma1}, so we give no more than a sketch of the argument. 	Suppose for a contradiction that $u(t) > 4g(t)$ on some interval $I$.  Then since $|w(t) + 3g(t)| \leq 5g(t)$ by~\eqref{wfrom0}, we see that $u(t) + 3g(t) >7g(t) \geq w(t) + 3g(t)$ on $I$, where $7g $ is a concave function, and in particular that  $I \subsetneq (0,T)$. Therefore we can find an affine function $l$ such that $u(t) + 3g(t) > l (t)\geq w(t) + 3g(t)$ on some subinterval of $I$.  Defining a new trajectory $u_l \in \scra{0}{(w + 3g)(T)}$  by replacing $u + 3g$ with $l$ on this subinterval of $I$, we see that $u_l$ does not increase the ``weight terms'' $\phi$ and $\Phi$ in $F$, and strictly decreases the gradient term, since affine functions are the unique minimizers of quadratic functionals, so
	\[
	\mathscr{F}(u_l) < \mathscr{F}( u + 3g),
	\]
	which contradicts the choice of $u$ as being such that $u + 3g$ is a minimizer.  Supposing in the other case for a contradiction that $u(t) < -4g(t)$ on some interval $I$, we see that $u(t) + 3g(t) < -g(t) \leq 0 \leq w(t) + 3g(t)$, where $-g$ is a convex function, and we use a similar argument to gain a contradiction.
	
	So indeed $|u(t)| \leq 4g(t)$ on $[0, T]$, hence $|u(t) - w(t)| \leq 6g(t)$ by~\eqref{wfrom0} and thus $\Phi(t, u -w) = 7|g''| |u - w|$ on $(0, T]$, by definition.
	
	We now claim that, extended to have value $0$ at $t = 0$, the function $g'(u - w)$ is absolutely continuous on $[0, T]$, i.e.\ can be written as an indefinite integral on $[0, T]$.   That this definition makes it continuous follows since
	\[
	|g'(t) ( u (t) - w(t))| \leq 6 |g'(t) g(t)| \leq 6 \left(\frac{1 }{\one 1/t} + \two 1/t\right)t \two 1/t  \to 0
	\]
	as $t \to 0$. Clearly $g'(u-w)$ is absolutely continuous on subintervals of $(0,T)$ bounded away from $0$, so by the dominated convergence theorem it suffices to show that $(g'(u-w))' \in L^1(0, T)$.  Now,
	\[
	|(g'(u-w))'| \leq |g''||u-w| + |g'|(|u' + 3g'| + |w' + 3g'|),
	\]
	and by the above we have that $|g''||u-w|\leq 6 |g''||g|$, which is bounded on $[0,T]$ by~\eqref{gg''}, so certainly integrable.  Using Cauchy-Schwartz, we see further that
	\[
	\int_0^T |g'|(| u' + 3g'| + |w' + 3g'|) \leq \left( \int_0^T |g'|^2 \right)^{\! 1/2} \left(\int_0^T ( |u' + 3g'| + |w' + 3g'|)^2\right)^{\! 1/2},
	\]
	which is finite since $g' \in L^2(0, T)$, $w \in W^{1,2}(0, T)$, and since $u + 3g$ is a minimizer of $\mathscr{F}$ by assumption.  So indeed $g'(u - w)$ is absolutely continuous on $[0,T]$.
	
	The minimality of $w$ established in theorem~\ref{thm:thesis} implies that
	\[
	\int_0^T \left( \phi ( t, u - w) + (u')^2 - (w')^2 \right) \geq 0.
	\] 
	So, recalling also the simple pointwise inequality~\eqref{triv}, and integrating $g'(u'-w')$ by parts, we see that
	\begin{align*}
	\mathscr{F}( u + 3g) - \mathscr{F}( w + 3g) 
	& = \int_0^T \left(\phi(t, u - w) + \Phi (t, u- w) + (u' + 3g')^2 - ( w' + 3g')^2 \right)\, dt\\
	& = \int_0^T \left(\phi(t, u-w) + (u')^2 - (w')^2\right)\, dt \\
	& \phantom{=} \quad {}+ \int_0^T \left(\Phi(t, u-w) + 6 g'(u' - w')\right) \, dt\\
	& \geq 0 + 6 [g'(u - w)]_{0}^T + \int_0^T \left(\Phi(t, u-w) - 6 g''(u -w)\right)\, dt \\
	& \geq \int_0^T \left(7|g''||u-w| - 6|g''| |u-w|\right) \, dt \\
	& \geq 0.
	\end{align*}
	So $w + 3g$ is indeed a minimizer of $\mathscr{F}$.
	
	Since $g'$ increases to $\infty$ as we approach $0$, we can find a sequence $r_k > 0$ such that $r_k \downarrow 0$ and $g' \geq k+ 1$ on $(0, r_k)$.  $T>0$ was chosen small enough that we may consistently set 	$r_0 = 2T$.  Define a convex and superlinear function $\Theta \colon \mathbb{R} \to [0, \infty)$ such that
	\begin{equation}
	\label{bigomega}
	\Theta(p) \geq 2^8\|w' + 3g'\|_{L^2(0, T)}^2 p r_{k}^{-3}\ \textrm{for}\ p \geq k/4,
	\end{equation}
	for all $k\geq 0$, as follows.  Set $\Theta(0) \df 0$ and $\Theta (1/4) \df 2^6\|w' + 3g'\|_{L^2(0, T)}^2 r_1^{-3}$.  Suppose that $\Theta (l/4)$ has been defined such that $\Theta (l / 4) \geq 2 \Theta ( (l-1)/4)$ for all $1 \leq l \leq k-1$, for some $k \geq 2$.  Define 
	\[
	\Theta(k/4) \df \max \{ 2 \Theta ((k-1)/4), 2^6\|w' + 3g'\|_{L^2(0, T)}^2 k r_{k}^{-3} \}.
	\]
	This defines $\Theta$ inductively at the points $k/4$, and we extend it to be affine on each interval $[k/4, (k+1)/4]$.  Define $\Theta(p) = \Theta(-p)$ for $p \in (-\infty, 0$).
	
	Define $L(t, y, p) \colon [0, T] \times \mathbb{R} \times \mathbb{R} \to [0, \infty)$ by 
	\begin{align*}
	L(t, y, p) 
	& \df \phi(t, y - (w + 3g)) + \Phi(t, y - (w + 3g)) + p^2 + (y - ( w+ 3g))^2 (\omega(p) + \Theta(p)) \\
	& = F(t, y, p) + ( y - ( w + 3g))^2 (\omega(p) + \Theta(p)).
	\end{align*}
	So $L$ is continuous, superlinear in $p$ and has $L_{pp}\geq 2> 0$, and, since $w + 3g$ is a minimizer of $\mathscr{F}$ over $\scra{0}{(w + 3g)(T)}$, clearly $w + 3g$ is  a minimizer of the associated functional $\mathscr{L}$ over $\scra{0}{(w + 3g)(T)}$.  
	
	By monotonicity of $g$ and~\eqref{wsteep} we have that
	\[
	\frac{(w + 3g)(x_n + t_k) - (w + 3g)(x_n)}{t_k} \geq \frac{w(x_n + t_k) - w(x_n)}{t_k} \geq 2k,
	\] 
	for all $n \geq 0$ and sufficiently large $k \geq 0$, depending on $n$.  Given this, the argument that $\mathscr{L}(w + 3g + u) = \infty$ for all $ u \in W_0^{1, \infty}(0, T)$ follows exactly as in theorem~\ref{thm:no var coercive}.  
	
	It just remains to show that the Lavrentiev phenomenon occurs.  The Mani\`a-style estimates follow exactly the same pattern as before.  Let $u \in W^{1, \infty}(0, T) \cap \scra{0}{(w + 3g)(T)}$.  Since $g'(0) = \infty$, and that $u(0) = w(0) = g(0) = 0$ and $u$ is Lipschitz, we have that $u < g/4$ on some right neighbourhood of $0$.  Since $u(T) = ( w + 3g)(T) \geq g(T)$, the intermediate value theorem implies that $\{ t \in (0, T) : u(t) = g(t) / 4\} \neq \emptyset$.  Define
	\[
	\tau_1 \df \sup \{ t \in (0, T) : u(t) = g(t) / 4 \} < T.
	\]
	Similarly $\{ t \in (\tau_1, T) : u(t) = g(t) / 2\} \neq \emptyset$, so we may define
	\[
	\tau_2 = \min \left\{ 2 \tau_1, \inf \{ t \in (\tau_1, T) : u(t) = g(t) / 2 \} \right\}.
	\]
	So $0 < \tau_2 - \tau_1 \leq \tau_1$.
	Choose $k \geq 0$ such that $r_{k+1} \leq 2 \tau_1 \leq r_k$.  By~\eqref{wfrom0}, the definition of $\tau_2$, the  monotonicity of $g$, and the choice of $r_k$, for all $t \in (\tau_1, \tau_2)$ we have that 
	\[
	(w+ 3g)(t) - u(t) \geq g(t) - g(t) / 2 \geq g( \tau_1)/2 \geq (k + 1) \tau_1 / 2.
	\]
	Thus $((w+3g)(t) - u(t))^2 \geq 2^{-2}(k + 1)^2\tau_1^2$ on $(\tau_1, \tau_2)$.
	
	Now, if $\tau_2 = 2 \tau_1$, then by the definition of $\tau_1$ and choice of $r_k$, we have that
	\[
	u(\tau_2) - u (\tau_1) \geq g( \tau_2) / 4 - g (\tau_1) / 4 = 2^{-2}( g ( 2 \tau_1) - g( \tau_1)) \geq 2^{-2}(k + 1) \tau_1.
	\]
	Otherwise, we have by the definitions of $\tau_1$ and $\tau_2$, the monotonicity of $g$,  and the choice of $r_k$ that
	\[
	u(\tau_2) - u(\tau_1) = g( \tau_2) / 2 - g (\tau_1)/4 \geq g (\tau_1)/ 2 -  g( \tau_1) / 4 = g (\tau_1) / 4 \geq 2^{-2}(k + 1) \tau_1.
	\]
	So in either case we have that 
	\[
	\frac{u ( \tau_2) - u (\tau_1)}{\tau_2 - \tau_1} \geq \frac{u ( \tau_2) - u (\tau_1)}{\tau_1} \geq 2^{-2} ( k + 1).
	\]
	Therefore by Jensen's inequality we have, by the choice of $\Theta$ as satisfying~\eqref{bigomega}, that
	\begin{align*}
	\int_{\tau_1}^{\tau_2}\, \Theta(u'(t))\, dt 
	& \geq ( \tau_2 - \tau_1)\, \Theta \! \left(\frac{1}{\tau_2 - \tau_1} \int_{\tau_1}^{\tau_2} u'(t) \, dt \right)\\
	& = ( \tau_2 - \tau_1) \, \Theta \! \left( \frac{u (\tau_2) - u ( \tau_1)}{\tau_2 - \tau_1}\right) \\
	& \geq ( \tau_2 - \tau_1) 2^8 \| w' + 3 g' \|_{L^2(0, T)}^2 \frac{ u ( \tau_2 ) - u( \tau_1)}{\tau_2 - \tau_1}r_{k+1}^{-3}\\
	& \geq 2^6 \| w' + 3 g' \|_{L^2(0, T)}^2 (k + 1) \tau_1 r_{k+1}^{-3}.
	\end{align*}
	So,  since $\tau_1 \geq r_{k+1}/2$, we have that
	\begin{align*}
	\mathscr{L}(u)
	&  \geq \int_{\tau_1}^{\tau_2} ( u (t) - ( w + 3g)(t))^2 \Theta ( u' (t)) \, dt \\
	& \geq 2^{-2} (k + 1)^2 \tau_1^2 \int_{\tau_1}^{\tau_2} \Theta ( u'(t)) \, dt \\
	& \geq 2^{-2} (k + 1)^2 \tau_1^2 2^6 \| w' + 3 g' \|_{L^2(0, T)}^2 (k + 1) \tau_1 r_{k+1}^{-3} \\
	& = 2^{4} (k + 1)^3 \tau_1^3\| w' + 3 g' \|_{L^2(0, T)}^2 r_{k+1}^{-3}\\
	& \geq 2 \|w' + 3g'\|_{L^2(0, T)}^2.
	\end{align*}
	That is, for all $u \in W^{1, \infty}(0, T) \cap \scra{0}{(w + 3g)(T)}$, we have that
	\[
	\mathscr{L}(u) \geq 2 \|w' + 3g'\|_{L^2(0, T)}^2 > \|w' + 3g'\|_{L^2(0, T)}^2 = \mathscr{L}(w + 3g),
	\]
	which is precisely to say that the Lavrentiev phenomenon occurs, as required.
\end{proof}
\section{Minimal regularity}
\label{sec:regularity}
We know that under our standing assumptions, minimizers need not be everywhere differentiable.  In this section, we deduce properties of the derivatives of minimizers at points at which they do exist.  We show that derivatives must be approximately continuous at points where they exist (theorems~\ref{thm:apcontfinite} and~\ref{thm:apcontinfinite}), and that ``kinks'' may not appear, i.e.\ if both one-sided derivatives exist at a point, they must be equal (corollaries~\ref{cor:nokinkfinite} and~\ref{thm:infinite one side}).

Our results in this section apply to vector-valued trajectories $v \colon [a,b] \to \mathbb{R}^n$.  Our proofs proceed by contradiction, assuming a minimizer $v$ has a derivative which fails to be well-behaved in a certain way, and thereby constructing a competitor trajectory with strictly lower energy, by replacing $v$ with affine pieces on open subintervals of the domain.  
\begin{definition}
	The left and right Dini derivatives, $D^{-}v(t)$ and $D^{+}v(t)$ respectively, of a function $v \in W^{1,1}((a,b);\mathbb{R})$ at a point $t \in [a,b]$ are given by
	\[
	D^{-}v(t) \df \lim_{s \uparrow t} \frac{ v(s) - v(t)}{s-t},\ \text{and}\ D^{+}v(t) \df \lim_{s \downarrow t}\frac{v(s) - v(t)}{s-t},
	\]
	whenever these limits make sense and exist as finite or infinite values.  The left and right derivatives of vector-valued functions $v \in W^{1,1}((a,b); \mathbb{R}^n)$ are formed by taking the vectors of the corresponding left and right derivatives of the components; thus these exist at a point if and only if the corresponding derivatives of each component function exist at that point.  In principle, then, such vectors of derivatives might contain components with infinite values.  We shall clearly distinguish the cases when all the components are finite, and when one or more may be infinite. 
\end{definition}

\begin{definition}[See for example~\cite{Evans-Gariepy}]
	We recall the usual definition of approximate continuity.  Let $f \colon [a,b] \to \mathbb{R}^n$ be measurable.  We say that $f$ is approximately continuous on the left at $t \in (a, b]$ if, for all $c > 0$,  
	\[
	\lim_{s \uparrow t} (t-s)^{-1} \lambda ( \{ r \in (s, t) : \| f(r) - f(t)\| \geq c \} )  = 0;
	\]
	similarly, we say that $f$ is approximately continuous on the right at $t \in [a, b)$ if, for all $c > 0$, 
	\[
	\lim_{s \downarrow t} (s-t)^{-1} \lambda( \{ r \in (t, s) :\|f(r) - f(t)\| \geq c \})  = 0.
	\]
\end{definition}
We retain our standing assumption of continuity of the Lagrangian.  Some further assumption of strict convexity is required to deduce any regularity results.  We impose the following condition on $L$: that for all $R \in [1, \infty)$, there exists $\tau_R > 0$ such that for all $(t, y, p)$ with $\max\{|t| , \|y \| , \|p\|\} \leq R + 1$, there exists a subdifferential $\xi \in \mathbb{R}^n$ of $L(t, y, \cdot)$ at $p$ such that 
\begin{equation}
\label{convexity-assump}
L(t, y, q ) \geq L(t, y, p) + \xi \cdot (q - p) + 2 \tau_R,
\end{equation}
whenever $\| q - p \| \geq R^{-1}$.  This holds in particular if the (partial) Hessian $L_{pp}$ exists and is continuous and strictly positive for all $(t, y, p)$.

The following lemma is our key tool, which we use repeatedly in the remainder of the section.
\begin{lemma}
	\label{key-lemma}
	Let $v \in W^{1,1}((a,b) ; \mathbb{R}^n)$, $R \geq |a| + |b| + \|v\|_{\infty} + 1$, and $\epsilon > 0$. 
	
	Then there exists $\delta > 0$ such that if $(t_1, t_2) \subseteq [a,b]$ satisfies $t_2 - t_1 \leq \delta$ and $\|v(t_2) - v(t_1)\| \leq R(t_2 - t_1)$, then $u \in W^{1,1}((a, b) ; \mathbb{R}^n)$ defined by 
	\[
	u(r) \df 
	\begin{cases}
	v(r) & r \notin (t_1, t_2), \\
	\textrm{affine} & \textrm{otherwise};
	\end{cases}
	\]
	satisfies
	\begin{align*}
	\lefteqn{\int_{t_1}^{t_2} L(s, v(s), v'(s))\, ds } \\
	& \geq \int_{t_1}^{t_2} L(s, u(s), u'(s))\, ds + \tau_R \lambda ( \{ s \in (t_1,t_2) : \| v'(s) - u'\| \geq R^{-1} \})  - \epsilon (t_2 - t_1).
	\end{align*}
\end{lemma}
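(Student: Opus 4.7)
The plan is to reduce everything to a single support-line inequality of the form $L(s,v(s),q) \geq L(s,v(s),p_0) + \xi_0\cdot(q - p_0) + (\text{strict-convexity bonus})$, where $p_0 \mathrel{\mathop:}= u' = (v(t_2)-v(t_1))/(t_2-t_1)$ is the (constant) derivative of the affine $u$ and $\xi_0$ is a single vector obtained from a chosen reference point $(s_0,v(s_0),p_0)$. Once this inequality is established uniformly in $s\in(t_1,t_2)$ and $q\in\mathbb{R}^n$, specialising $q=v'(s)$ and integrating kills the $\xi_0$-term because $\int_{t_1}^{t_2}(v'(s)-p_0)\,ds = v(t_2)-v(t_1)-p_0(t_2-t_1)=0$, and a final uniform-continuity swap of $v(s)$ for $u(s)$ inside $L$ yields exactly the stated inequality.

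To set up, I would fix $\epsilon' \leq \min\{\epsilon/3,\,\tau_R/2\}$ and use uniform continuity of $L$ on the compact $[a,b]\times\bar B(0,R+1)\times\bar B(0,R+1)$ to produce $\eta>0$ witnessing $|L(s_1,y_1,p)-L(s_2,y_2,p)| \leq \epsilon'$ whenever $|s_1-s_2|\leq\eta$, $\|y_1-y_2\|\leq\eta$, $\|p\|\leq R+1$. Uniform continuity of $v$ on $[a,b]$, together with the bound $\|p_0\|\leq R$ forced by $\|v(t_2)-v(t_1)\|\leq R(t_2-t_1)$, then supplies $\delta>0$ such that, for any admissible $(t_1,t_2)$, both $\|v(s)-v(s_0)\|$ and $\|u(s)-v(s)\|$ are at most $\eta$ for every $s\in(t_1,t_2)$, where $s_0\in(t_1,t_2)$ is arbitrary. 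Since $\max\{|s_0|,\|v(s_0)\|,\|p_0\|\}\leq R$, the hypothesis \eqref{convexity-assump} at $(s_0,v(s_0),p_0)$ yields a single vector $\xi_0\in\mathbb{R}^n$ with
\[
L(s_0,v(s_0),q) \geq L(s_0,v(s_0),p_0) + \xi_0\cdot(q-p_0) + 2\tau_R\,\mathds{1}_{\{\|q-p_0\|\geq R^{-1}\}}.
\]

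Transferring this inequality from the reference point to an arbitrary $(s,v(s))$ is the step I expect to require the most care, since the possible largeness of $v'(s)$ places $q = v'(s)$ outside the compact set on which $L$ is uniformly continuous. For $\|q-p_0\|\leq 1$ the transfer is immediate from uniform continuity, at cost $2\epsilon'$. For $\|q-p_0\|>1$, writing $q = p_0+\rho e$ with $\|e\|=1$ and $\rho>1$, the convexity of $L(s,v(s),\cdot)$ gives the extrapolation
\[
L(s,v(s),q) - L(s,v(s),p_0) \geq \rho\bigl(L(s,v(s),p_0+e) - L(s,v(s),p_0)\bigr),
\]
and applying the bounded case to $p_0+e$ (which satisfies $\|p_0+e-p_0\|=1\geq R^{-1}$) furnishes a bonus of $\rho(2\tau_R-2\epsilon')\geq 2\tau_R-2\epsilon'$. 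The two cases combine into one inequality valid for all $q\in\mathbb{R}^n$, providing a strict-convexity bonus of at least $2\tau_R-2\epsilon'$ on $E$ and at least $-2\epsilon'$ off $E$.

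Finally, integrating over $q = v'(s)$ across $(t_1,t_2)$ gives, after the $\xi_0$-term vanishes,
\[
\int_{t_1}^{t_2}\bigl(L(s,v,v') - L(s,v,p_0)\bigr)\,ds \;\geq\; 2\tau_R\,\lambda(E) - 2\epsilon'(t_2-t_1).
\]
A last application of uniform continuity controls $\bigl|\int_{t_1}^{t_2}L(s,v,p_0)\,ds - \int_{t_1}^{t_2}L(s,u,p_0)\,ds\bigr| \leq \epsilon'(t_2-t_1)$, and the identity $u'\equiv p_0$ identifies the resulting integral with $\int L(s,u,u')\,ds$. Collecting all errors as $3\epsilon'\leq\epsilon$ and using $2\tau_R\lambda(E)\geq\tau_R\lambda(E)$ delivers the desired inequality. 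The conceptual point worth highlighting is that working with the \emph{single} vector $\xi_0$ bypasses any need for a measurable selection of subdifferentials of the varying convex functions $L(s,v(s),\cdot)$; convexity in $p$ alone propagates the support-line inequality both across $(s,y)$ (via uniform continuity) and across large $q$ (via the $\rho$-extrapolation).
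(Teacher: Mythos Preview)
Your proof is correct and follows essentially the same route as the paper's: a single subdifferential $\xi_0$ at a reference point $(s_0,v(s_0),p_0)$ (the paper uses $(t_1,v(t_1),l)$), uniform continuity to transfer the support-line inequality across nearby $(s,y)$ for bounded $q$, and a convexity extrapolation (your $\rho$-scaling, the paper's $\tilde{q}$-rescaling) to handle large $q$, followed by integration so that the $\xi_0$-term vanishes. The only cosmetic differences are your case split at $\|q-p_0\|=1$ rather than $R^{-1}$ and your direct use of uniform continuity of $v$ instead of the $L^1$ absolute-continuity of $\|v'\|$.
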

\begin{proof}
	First we show that the strict convexity and continuity of $L$ conspire to allow us to use a subdifferential of $L(t, y, \cdot)$ in the convexity inequality involving the function $L(s, z, \cdot)$, when $(s, z)$ is near to $(t, y)$.  
	
	We choose $\delta_1 \in (0,1)$ witnessing the uniform continuity for $\min \{ \tau_R/2, \epsilon / 3\}$ of $L$ for $(t, y, p)$ such that $\max\{|t|, \|y\| , \|p\| \}\leq R + 1$.  Let $(t, y), (s, z) \in [a,b] \times \mathbb{R}^n$ be such that $\max\{ |t|, \|y\|\} \leq R$ and $\max\{| s - t| , \|y - z\|\} \leq \delta_1$, and let $p, q \in \mathbb{R}^n$ be such that $\|p\| \leq R$ and $\|q - p\| \geq R^{-1}$. Define $\tilde{q} \df p + R^{-1}\|q - p\|^{-1} (q-p)$, so $\| \tilde{q} - p\| = R^{-1}$ and $\|\tilde{q}\| \leq R + R^{-1}$.  Letting $\xi \in \mathbb{R}^n$ be a subdifferential of $L(t, y, \cdot)$ at $p$ which satisfies~\eqref{convexity-assump}, using continuity twice we see that
	\begin{align*}
	L(s, z, \tilde{q}) \geq L(t, y, \tilde{q}) - \tau_R/2 
	& \geq L(t, y, p) + \xi \cdot ( \tilde{q} - p) + 3\tau_R/2 \\
	&\geq L(s, z, p ) + \xi \cdot ( \tilde{q} - p) + \tau_R.
	\end{align*}
	Convexity of the one-dimensional function $\mu \mapsto L(s, z, p + \mu ( q-p))$  allows us to infer, since $R^{-1}\|q - p\|^{-1} \leq 1$,  that
	\begin{align*}
	L(s, z, q ) - L(s, z, p)  \geq \frac{ L(s, z, \tilde{q}) - L(s, z, p)}{R^{-1}\|q - p\|^{-1}} 
	& \geq R \|q - p\| ( \xi \cdot ( \tilde{q} - p) + \tau_R) \\
	& \geq  \xi \cdot (q - p) + \tau_R.
	\end{align*}
	That is,  $\delta_1 > 0$ is such that for all $(t, y, p)$ with $\max\{|t|,\|y \|, \|p\|\} \leq R$, there exists a subdifferential $\xi \in \mathbb{R}^n$ of $L(t, y, \cdot)$ at $p$ such that 
	\begin{equation}
	\label{convexity-conseq}
	L(s, z, q) \geq L(s, z, p) + \xi \cdot (q - p) + \tau_R,
	\end{equation}
	whenever $\max\{|s-t|,\|y - z\|\} \leq \delta_1$ and $\| q - p\| \geq R^{-1}$.
	
	Since $v$ is absolutely continuous, when seeking to apply this inequality along the graph of the trajectory of $v$ we can reduce the condition on proximity of the $y$ variable (i.e.\ the $v(t)$ term) to a condition only on proximity of the time variable.  Choose $\delta  \in (0, \delta_1 / 2R)$ such that $\int_E \|v'\| \leq \delta_1 / 2$ whenever $E \subseteq [a,b]$ satisfies $\lambda (E) \leq \delta$.  
	
	Now let $(t_1, t_2) \subseteq [a,b]$ be such that $0 < t_2 - t_1 \leq \delta$, and $\|v(t_2) - v(t_1)\| \leq R(t_2 - t_1)$, and let $s \in (t_1, t_2)$. Then for $u \in W^{1,1}((a,b); \mathbb{R}^n)$ as defined in the statement, $u'(s) =  l \df (t_2 - t_1)^{-1} (v(t_2) - v(t_1)) $, and so $\|l\| \leq R$ by assumption.  Then
	\begin{align*}
	\|u (s) - v(t_1)\| & \leq R( s - t_1) \leq  R\delta \leq \delta_1/2;\\
	\|v(s) - v(t_1)\| & \leq \int_{t_1}^s \|v'\| \leq \delta_1/2;\\
	\shortintertext{and so}
	\|u(s) - v(s)\| & \leq \|u(s) - v(t_1)\| + \|v(t_1) - v(s)\| \leq \delta_1.
	\end{align*}
	So $\max\{|s - t_1|, \|u(s)- v(t_1)\|,  \|v(s) - v(t_1)\|, \|u(s) - v(s)\|\} \leq \delta_1$.  Moreover, $\max \{ |t|, \|v\|_{\infty}\} \leq R$.  So there exists a subdifferential $\xi \in \mathbb{R}^n$ of $L(t_1, v(t_1), \cdot)$ at $l$ for which~\eqref{convexity-conseq} holds.
	Suppose first that  $s \in (t_1, t_2)$ is such that $v'(s)$ exists and $\| v'(s) - l\| \geq R^{-1}$.  Then using~\eqref{convexity-conseq} and the continuity of $L$, we have
	\begin{align*}
	L(s, v(s), v'(s)) 
	& \geq L(s, v(s), l) + \xi \cdot ( v'(s) - l) + \tau_R \\
	& \geq L(s, u(s), l) + \xi \cdot (v'(s) - l) + \tau_R - \epsilon/3.
	\end{align*}
	Suppose now that $s \in (t_1, t_2)$ is such that $v'(s)$ exists and $\|v '(s) - l\| < R^{-1}$.  Then $\|v'(s)\| \leq \|l\| + R^{-1} \leq R + 1$, and we may use continuity and a (non-strict) application of our convexity assumption~\eqref{convexity-assump} to see that
	\begin{align*}
	L(s, v(s), v'(s) )
	& \geq L(t_1, v(t_1), v'(s)) - \epsilon/3 \\
	& \geq L(t_1, v(t_1), l) + \xi \cdot (v'(s) - l) - \epsilon/3 \\
	& \geq L(s, u(s), l) + \xi \cdot (v'(s) - l) - 2 \epsilon / 3.
	\end{align*}
	Since almost every $s \in (t_1, t_2)$ falls into one of these two cases, we can now integrate and see that
	\begin{align*}
	\lefteqn{\int_{t_1}^{t_2} L(s, v(s), v'(s)) \, ds }\\
	& \geq \int_{\{ s \in (t_1, t_2) : \|v'(s) - l \| \geq R^{-1} \}} (L(s, u(s), l) + \xi \cdot ( v'(s) - l) + \tau_R - \epsilon/3) \, ds \\
	& \phantom{=} {}+ \int_{\{ s \in (t_1, t_2) : \| v'(s) - l \| < R^{-1}\}}( L(s, u(s) , l) + \xi \cdot ( v'(s) - l) - 2 \epsilon / 3) \, ds\\
	& \geq \int_{t_1}^{t_2} L(s, u(s), l) \, ds \\
	& \phantom{=} {}+ \int_{t_1}^{t_2} \xi \cdot (v'(s) - l)\, ds + \tau_R \lambda ( \{ s \in (t_1, t_2): \|v'(s) - l\| \geq R^{-1}\} ) - \epsilon(t_2 - t_1)\\
	& = \int_{t_1}^{t_2} L(s, u(s), u'(s)) \, ds + \tau_R \lambda ( \{ s \in (t_1, t_2): \|v'(s) - l\| \geq R^{-1}\} ) - \epsilon(t_2 - t_1),
	\end{align*}
	recalling that $l = (t_2 - t_1)^{-1} (v (t_2) - v(t_1))$, and therefore that $\int_{t_1}^{t_2} \xi \cdot (v'(s) - l) \, ds = 0$.
\end{proof}
Armed with this tool, we may swiftly deduce some facts about the behaviour of the derivatives of minimizers.  Assuming for a contradiction some bad behaviour of the derivative of a minimizer, each proof comes down to the ability to insert small affine segments into the trajectory, with slopes which differ significantly from the derivative of the minimizer.  The construction of these affine segments is slightly easier when the range is one-dimensional, but the general case is not particularly difficult, so we content ourselves with attacking immediately the vector-valued case.  The reader who believes that our proofs need not be quite as fussy in the one-dimensional case is quite right. 
\begin{theorem}
	\label{thm:apcontfinite} 
	Let $v \in W^{1,1}((a,b); \mathbb{R}^n)$ be a minimizer of $\mathscr{L}$ over $\scra{v(a)}{v(b)}$, and suppose for some $t \in [a,b]$ that, respectively,  $t \in (a, b]$ and $D^{-}v(t)$ exists and each component is finite; or $t \in [a,b)$ and  $D^{+}v(t)$ exists and each component is finite.
	
	Then $v'$ is approximately continuous on the left, respectively right, at $t$.
\end{theorem}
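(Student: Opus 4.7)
The plan is to argue by contradiction using Lemma~\ref{key-lemma} directly. Treating the left-sided case (the right-sided being symmetric), we interpret the conclusion as: for every $c > 0$,
\[
\lim_{s \uparrow t}(t-s)^{-1}\lambda\bigl(\{r \in (s,t) : \|v'(r) - D^{-}v(t)\| \geq c\}\bigr) = 0.
\]
Suppose this fails. Then one can extract $c > 0$, $\eta > 0$, and a sequence $s_n \uparrow t$ with
\[
\lambda\bigl(\{r \in (s_n,t) : \|v'(r) - D^{-}v(t)\| \geq c\}\bigr) \geq \eta(t - s_n).
\]

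Next, I would fix the parameters for Lemma~\ref{key-lemma}: pick $R \geq \max\{|a|+|b|+\|v\|_{\infty}+1,\ \|D^{-}v(t)\|+1,\ 2c^{-1}\}$, set $\epsilon \mathrel{\mathop:}= \tau_R \eta / 2$, and obtain the corresponding $\delta > 0$. Because $D^{-}v(t)$ exists and is finite, the chord slopes
\[
l_n \mathrel{\mathop:}= \frac{v(t) - v(s_n)}{t - s_n}
\]
converge to $D^{-}v(t)$, so for all large $n$ we have $\|l_n - D^{-}v(t)\| \leq c/2$, $\|l_n\| \leq R$, $\|v(t)-v(s_n)\| \leq R(t-s_n)$, and $t - s_n \leq \delta$. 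For such $n$ the inclusion
\[
\{r \in (s_n,t) : \|v'(r) - D^{-}v(t)\| \geq c\} \subseteq \{r \in (s_n,t) : \|v'(r) - l_n\| \geq R^{-1}\}
\]
holds, since $c/2 \geq R^{-1}$.

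I would then apply Lemma~\ref{key-lemma} to the interval $(s_n,t)$: the competitor $u_n \in \scra{v(a)}{v(b)}$, equal to $v$ off $(s_n,t)$ and affine on it with slope $l_n$, satisfies
\[
\int_{s_n}^{t} L(r, v, v') \, dr \geq \int_{s_n}^{t} L(r, u_n, u_n') \, dr + \tau_R \eta (t - s_n) - \epsilon(t - s_n),
\]
so $\mathscr{L}(v) - \mathscr{L}(u_n) \geq \tau_R \eta (t - s_n)/2 > 0$, contradicting the minimality of $v$. The right-sided case repeats the argument verbatim on intervals $(t, s_n)$ with $s_n \downarrow t$.

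The only subtle step is ensuring that the chord slope $l_n$ is close enough to $D^{-}v(t)$ that the quantitatively ``bad'' set for $v'$ with respect to $D^{-}v(t)$ sits inside the set where $\|v' - l_n\| \geq R^{-1}$; this is exactly what the hypothesis that $D^{-}v(t)$ exists and is finite delivers, and is what forces us to treat the finite and infinite cases separately (the latter being handled in Theorem~\ref{thm:apcontinfinite}). Everything else is a bookkeeping application of the key lemma.
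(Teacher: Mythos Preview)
Your argument is correct and in fact more direct than the paper's. Both proofs proceed by contradiction and invoke Lemma~\ref{key-lemma}, but they build the competitor differently. The paper normalizes so that the one-sided derivative is zero, then around each point $s$ of the ``bad'' set $\{\|v'\|\ge c\}$ it locates connected components $(s^{-},s)$ and $(s,s^{+})$ of $\{r:\|v(r)-v(s)\|>c|r-s|/2\}$, extracts a finite cover of a compact subset of the bad set, thins it to a pairwise disjoint family, replaces $v$ by affine pieces of slope $c/2$ on each subinterval, and sums the estimates. You instead apply the key lemma once, on the single interval $(s_n,t)$ with $t$ itself as an endpoint, exploiting that the chord slope $l_n$ converges to the finite one-sided derivative so that the bad set with respect to $D^{-}v(t)$ sits inside $\{\|v'-l_n\|\ge R^{-1}\}$. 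This buys a shorter, cover-free proof; the paper's covering machinery is the template genuinely needed for Theorem~\ref{thm:apcontinfinite}, where the chord to $t$ has unbounded slope and one cannot use $(s_n,t)$ as a single admissible interval, but in the finite case it is unnecessary.
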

\begin{proof}
	We consider the case in which $t \in [a, b)$;  the other case is similar.
	
	By translating our domain $[a,b]$, subtracting an affine function from $v$, and making the corresponding corrections to $L$, without loss of generality we may assume that $t= 0 \in [a, b)$, $v(t) = 0$, and $D^{+}v (t) = 0$.  
	
	Suppose for a contradiction that the result is false, so there exist $c, \alpha \in (0,1)$ and arbitrarily small $s > 0$ such that 
	\begin{equation}
	\label{contradiction-assump-1}
	\lambda (\{r \in (0, s): \|v'(r)\| \geq c\}) > \alpha s.
	\end{equation}
	Let $\delta \in (0, b)$ be as given by lemma~\ref{key-lemma} for $R \geq 2c^{-1}$ and $\epsilon  \leq \alpha\tau_R / 8$.  Let $s_0 \in (0, \delta/2 )$ be such that~\eqref{contradiction-assump-1} holds and $s \in (0, s_0)$ implies that
	\begin{equation}
	\label{vquot}
	\frac{\|v(2s)\|}{2s} \leq c/8.
	\end{equation}
	Consider $s \in (0, s_0)$ such that $\|v'(s)\| \geq c$. Then there exist $s^{\pm}$ such that $(s^{-}, s)$ and $(s, s^{+})$ are connected components of the set $\{ r \in (0, 2s_0): \|v(r) - v(s) \| > c | r -s|/ 2\}$.  Note that~\eqref{vquot} implies that $s^{-} > 0$, and that $s^{+} < 2s_0$, since
	\[
	\frac{\| v(2s_0)  -v (s)\|}{| 2s_0 - s|} \leq \frac{\|v(2s_0)\|}{s_0} + \frac{\|v(s)\|}{s_0} \leq 2\frac{\|v(2s_0)\|}{2s_0} + \frac{\|v(s)\|}{s} \leq 3c/8.
	\]
	By the Besicovitch covering theorem we may extract from the collection $\{ (s^{-}, s^{+}) : s \in (0, s_0), \ \|v'(s)\| \geq c\}$ a pairwise disjoint subcollection $\mathcal{I} = \{ (s_i^{-}, s_i^{+})\}_{i = 1}^{\infty}$, say, such that 
	\[
	\meas{\bigcup \mathcal{I} \cap \{r \in (0, s_0) : \|v'(r) \| \geq c\} } \geq \meas{\{r \in (0, s_0) : \|v'(r) \| \geq c\}} /2 >\alpha s_0/2.
	\]
	Define $u \in \scra{v(a)}{v(b)}$ by
	\[
	u(r) \df
	\begin{cases}
	v(r) & r \notin \bigcup_{i=1}^{\infty} (s_i^{-}, s_i) \cup (s_i, s_i^{+}),\\
	\mathrm{affine} & \mathrm{otherwise}.
	\end{cases}
	\]
	Let $i \geq 1$, and let $I_i^{-} \df (s_i^{-}, s_i)$ and $I_i^{+} \df (s_i, s_i^{+})$.  By choice of $s_i^{\pm}$, on $I_i^{\pm}$ we have that $\|u'\| = c/2$, and, furthermore, that $\|v'(r)\| \geq c$ implies that $\|v'(r) - u'\| \geq c/2$.  Hence
	\[
	\meas{ \{ r \in I_i^{\pm} : \|v'(r) - u'\| \geq c/2\}} \geq \meas{ \{ r \in I_i^{\pm} : \|v'(r)\| \geq c\} }.
	\]
	So lemma~\ref{key-lemma} implies that
	\[ 
	\int_{I_i^{\pm}} L(r, v, v')\, dr \geq \int_{I_i^{\pm}}L(r, u , u') \, dr + \tau_R \meas{ \{r \in I_i^{\pm} :  \|v'(r)\| \geq c\}} -  \alpha \tau_R \meas{I_i^{\pm}}/8,
	\]
	and so summing, since $\mathcal{I}$ is pairwise disjoint and $\bigcup \mathcal{I} \subseteq (0, 2s_0)$, gives that
	\begin{align*}
	\lefteqn{\int_{\bigcup \mathcal{I}} L(r, v, v')\, dr }\\
	& \geq \int_{\bigcup \mathcal{I}} L(r, u, u') \, dr + \tau_R \meas{ \bigcup \mathcal{I} \cap\{ r \in (0, s_0) : \|v'(r)\| \geq c\} } - \alpha \tau_R \meas{ \bigcup \mathcal{I} } / 8 \\
	&\geq \int_{\bigcup \mathcal{I}}L(r, u ,u')\, dr + \alpha \tau_R s_0/2 - \alpha \tau_R s_0 / 4 \\
	& = \int_{\bigcup \mathcal{I}}L(r, u, u') + \tau_R \alpha s_0/ 4,
	\end{align*}
	which is a contradiction.
\end{proof}
\begin{theorem}
	\label{thm:apcontinfinite}
	Let $v \in W^{1,1}((a,b); \mathbb{R}^n)$ be a minimizer of $\mathscr{L}$ over $\scra{v(a)}{v(b)}$, and suppose for some $t \in [a,b]$ that, respectively, $t \in (a, b]$ and $D^{-}v(t)$ exists and at least one component is infinite; or $t \in [a, b)$ and $D^{+}v(t)$ exists and at least one component is infinite.
	
	Then $v'$ is approximately continuous on the left, respectively right, at $t$, in the sense that for all $m > 0$, 
	\begin{align*}
	\lim_{s \uparrow t}(t-s)^{-1}\meas{  \{ r \in (s, t) : \|v'(r)\| \leq m \}} & = 0, \ \textrm{respectively}\\
	\lim_{s \downarrow t} (s- t)^{-1}\meas{ \{ r \in (t, s) : \|v'(r)\| \leq m \} } & = 0.
	\end{align*}
\end{theorem}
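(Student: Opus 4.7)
The plan is to adapt \cref{thm:apcontfinite} with the roles of large and small gradient reversed: instead of replacing $v$ by low-slope affine pieces on intervals where $\|v'\|$ is large, here we replace $v$ by affine pieces of a large fixed slope $R$ on intervals where $\|v'\| \leq m$. The existence of such chords of prescribed slope is guaranteed precisely by the assumption that $D^{+}v(t)$ has an infinite component, which forces $\|v(r)\|/r$ to exceed any preassigned value for $r$ small.

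After translating so that $t = 0 \in [a, b)$, $v(0) = 0$, we treat the right-sided case; the hypothesis gives $\|v(r)\|/r \to \infty$ as $r \downarrow 0$. Suppose for contradiction that for some $m > 0$, $\alpha \in (0, 1)$, and arbitrarily small $s > 0$, $\lambda(A \cap (0, s)) > \alpha s$, where $A := \{\|v'\| \leq m\}$. Fix $R \geq \max\{|a|+|b|+\|v\|_\infty + 1, 2m+2\}$, let $\delta = \delta(R)$ come from \cref{key-lemma}, set $\epsilon := \alpha \tau_R / 4$, and pick $s_0 \in (0, \delta)$ with $\lambda(A \cap (0, s_0)) > \alpha s_0$ and $\|v(r)\|/r > R$ for all $r \in (0, s_0]$. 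Write $T := A \cap (0, s_0)$.

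For each $s \in T$, let $t_1(s) := \sup\{r \in [0, s) : \|v(s)-v(r)\|/(s-r) \geq R\}$. Since $r = 0$ belongs to this set while the ratio tends to $\|v'(s)\| \leq m < R$ as $r \uparrow s$, we obtain $0 \leq t_1(s) < s$ with $\|v(s) - v(t_1(s))\|/(s - t_1(s)) = R$ by continuity. The affine interpolant $u_s$ on $I(s) := (t_1(s), s)$ has $\|u_s'\| = R$, so $\|v'(r) - u_s'\| \geq R - m \geq R^{-1}$ whenever $r \in A \cap I(s)$. Fixing a compact subset $K \subseteq T$ with $\lambda(K) \geq \lambda(T)/2 > \alpha s_0/2$, a greedy descent from the right---at each stage pick $s_k := \max K_{k-1}$ (with $K_0 := K$), set $I_k := I(s_k)$, and replace $K_{k-1}$ by the closed set $K_k := K_{k-1} \setminus [t_1(s_k), s_k]$, iterated transfinitely up to a countable ordinal if necessary---produces a pairwise-disjoint countable family $\{I_k\}$ with $\sum_k \lambda(K \cap I_k) = \lambda(K)$.

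Each $I_k \subseteq (0, s_0)$ has length $\leq s_0 < \delta$ and chord slope $R$, so \cref{key-lemma} applies. Gluing $v$ with the affine interpolants on each $I_k$ yields a competitor $\tilde u \in \scra{v(a)}{v(b)}$, and summation gives
\begin{equation*}
\mathscr{L}(v) - \mathscr{L}(\tilde u) \geq \sum_k\bigl[\tau_R \lambda(A \cap I_k) - \epsilon \lambda(I_k)\bigr] \geq \tau_R \lambda(K) - \epsilon s_0 > \tfrac{1}{2}\alpha\tau_R s_0 - \tfrac{1}{4}\alpha\tau_R s_0 = \tfrac{1}{4}\alpha\tau_R s_0 > 0,
\end{equation*}
contradicting minimality. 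The main obstacle is the covering step: unlike in \cref{thm:apcontfinite}, here $s$ sits only at the right endpoint of $I(s)$ and the length $s - t_1(s)$ is dictated by $v$ rather than freely chosen, so the ``no three intervals share a common point'' trick does not transfer; a direct greedy-from-the-right argument must be carried out in its place, with some care needed when the sequence of maxima does not decrease to zero after finitely many steps.
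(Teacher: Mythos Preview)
Your overall strategy is correct and matches the paper's: assume a positive-density set $A$ of small-gradient points near $t$, attach to each such point a chord of prescribed large slope ending there, replace $v$ by affine pieces on disjoint chords, and invoke \cref{key-lemma}. The difference lies entirely in how you organise the covering.

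Your greedy-from-the-right argument has a small but genuine slip: $K_k := K_{k-1}\setminus[t_1(s_k),s_k]$ is \emph{not} closed in general (e.g.\ $[0,1]\setminus[\tfrac12,1]=[0,\tfrac12)$), so $\max K_k$ need not exist and the recursion stalls. The fix is immediate---remove the half-open interval $(t_1(s_k),s_k]$ instead, so that $K_k=K_{k-1}\cap[0,t_1(s_k)]$ remains compact; the open intervals $I_k$ are still pairwise disjoint (they share at most the single endpoint $t_1(s_k)$), and the transfinite recursion then terminates at a countable ordinal because the $I_k$ have positive length. A second, cosmetic point: $\delta$ in \cref{key-lemma} depends on both $R$ and $\epsilon$, so you must fix $\epsilon=\alpha\tau_R/4$ \emph{before} invoking $\delta$.

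The paper sidesteps the transfinite machinery with an asymmetric-extension trick. Since each $s\in M$ sits only at the right endpoint of $(s-\sigma_s,s)$, the paper covers $M$ instead by the slightly enlarged intervals $(s-\sigma_s,\,s+\alpha\sigma_s/8)$, extracts a finite minimal subcover (so no point lies in three of them), and passes to a pairwise-disjoint half $\mathcal{I}$. The added right-hand stubs have total length at most $\tfrac{\alpha}{8}\sum_i\sigma_i\leq\alpha s_0/4$, so the left portions $(s_i-\sigma_i,s_i)$ still carry measure at least $\alpha s_0/4$ of the small-gradient set, and one finishes with a \emph{finite} sum. This is shorter and avoids ordinal bookkeeping; your route works once patched, but is heavier than needed.
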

\begin{proof}
	We consider that case in which $t \in [a, b)$; the other case is similar.
	
	Without loss of generality we may assume that $t = 0 \in [a, b)$ and $v(t) = 0$. We suppose for a contradiction that there exist $m \in (1, \infty)$, $\alpha \in (0,1)$, and  arbitrarily small $s > 0$ such that 
	\begin{equation}
	\label{contra-assump-2}
	\meas{\{r \in (0, s) :  \|v'(r)\| \leq m\} } > \alpha s.
	\end{equation}
	Let $\delta \in (0, b)$ be as given by lemma~\ref{key-lemma} for $R \geq 2m$, and $\epsilon \leq \tau_R \alpha/16$.  Choose $s_0 \in (0, \delta/2)$ such that~\eqref{contra-assump-2} holds and such that $s \in (0,s_0)$ satisfies
	\begin{equation}
	\label{vquot2}
	\frac{\|v(s)\|}{s} \geq 3m.
	\end{equation}
	Consider $s \in (0,s_0)$ such that $\|v'(s)\| \leq m$.  Then there exist $s^{\pm}$ such that $(s^{-}, s)$ and $(s, s^{+})$ are connected components of the set $\{ r \in (0,s_0) : \|v(r) - v(s)\| < 2m |r-s|\}$.  Note that $s^{-} >0$ by~\eqref{vquot2}.  Define $\sigma_s \df  s - s^{-} > 0$.  By the Besicovitch covering theorem we may extract from the collection $\{ (s- \sigma_s, s+ \alpha \sigma_s/8): s \in (0, s_0),\ \|v'(s)\| \leq m \}$ a pairwise disjoint subcollection $\mathcal{I} = \{ (s_i - \sigma_i, s_i +\alpha \sigma_i /8)\}_{i=1}^{\infty}$, say,  such that 
	\[
	\meas{\bigcup \mathcal{I} \cap \{r \in (0, s_0) : \|v'(r)\| \leq m\} } > \alpha s_0/2.
	\]
	For each $i \geq 1$, since $\alpha/8 < 1$ and  $0 < \sigma_i < s_i$, we see that $s_i + \alpha \sigma_i/8 < 2s_i <2s_0$, i.e.\ $\bigcup \mathcal{I} \subseteq (0, 2s_0)$.  Since $\mathcal{I}$ is pairwise disjoint, we see that 
	\[
	\sum_{i=1}^\infty\sigma_i = \lambda \left( \bigcup_{i=1}^\infty (s_i - \sigma_i , s_i) \right) \leq 2s_0,
	\]
	so 
	\begin{align*}
	\lambda\left( \bigcup_{i=1}^{\infty} (s_i, s_i + \alpha \sigma_i / 8) \cap \{ r \in (0, s_0) : \|v'(r)\| \leq m\} \right)
	& \leq  \lambda \left( \bigcup_{i=1}^{\infty}  (s_i, s_i + \alpha \sigma_i/8) \right) \\
	& = \alpha \left(\sum_{i=1}^{\infty}  \sigma_i\right)/8 \\
	& \leq \alpha s_0 /4,
	\end{align*}
	and so
	\begin{equation}
	\label{left big}
	\lambda \left( \bigcup_{i=1}^{\infty}  (s_i - \sigma_i, s_i) \cap \{r \in (0, s_0):  \| v'(r)\| \leq m\} \right) \geq \alpha s_0 /4.
	\end{equation}
	Define $u \in \scra{v(a)}{v(b)}$ by
	\[
	u(r) \df 
	\begin{cases}
	v(r) & r \notin \bigcup_{i=1}^{\infty}  (s_i - \sigma_i, s_i),\\
	\textrm{affine} & \textrm{otherwise}.
	\end{cases}
	\]
	Fix $i \geq 1$.  On $(s_i -\sigma_i, s_i)$, we have that $\|u'\| = 2m$, and, furthermore, that $\| v'(r)\| \leq m$ implies $\|v'(r) - u'\| \geq m$, so
	\[
	\meas{ \{ r \in (s_i- \sigma_i, s_i) : \|v'(r) - u'\| \geq m\} } \geq \meas{ \{ r \in (s_i - \sigma_i, s_i) : \|v'(r)\| \leq m\}}.
	\]
	So lemma~\ref{key-lemma} implies that 
	\begin{align*}
	\lefteqn{ \int_{s_i- \sigma_i}^{s_i} L(r, v, v')\, dr }\\
	& \geq \int_{s_i - \sigma_i}^{s_i} L(r, u ,u')\, dr + \tau_R \meas{ \{r \in (s_i - \sigma_i, s_i) :  \|v'(r)\| \leq m\}} - \alpha \tau_R \sigma_i / 16, 
	\end{align*}
	and so, summing, since $\mathcal{I}$ is pairwise disjoint and $\bigcup \mathcal{I} \subseteq (0, 2s_0)$, gives, by~\eqref{left big},  that
	\allowdisplaybreaks{
		\begin{align*}
		\lefteqn{\int_{\bigcup_{i = 1}^{\infty}  (s_i - \sigma_i, s_i)} L(r, v, v') \, dr}\\
		&\geq \int_{\bigcup_{i = 1}^{\infty}  (s_i - \sigma_i, s_i)} L(r, u, u') \, dr \\
		&\phantom{=} {}+ \tau_R \meas{ \bigcup_{i = 1}^{\infty}  (s_i - \sigma_i, s_i) \cap \{ r \in (0, s_0) : \|v'(r)\| \leq m\} } - \alpha \tau_R \meas{ \bigcup \mathcal{I}} / 16 \\
		& \geq \int_{\bigcup_{i = 1}^{\infty}  (s_i - \sigma_i, s_i)}L(r, u ,u')\, dr +  \alpha \tau_R  s_0 / 4 - \alpha \tau_R s_0 / 8 \\
		& = \int_{\bigcup_{i = 1}^{\infty}  (s_i - \sigma_i, s_i)}L(r, u ,u') \, dr+ \alpha \tau_R  s_0 / 8,
		\end{align*}
	}
	which is a contradiction.
\end{proof}
\begin{corollary}
	\label{cor:nokinkfinite}
	Let $v \in W^{1,1}((a,b); \mathbb{R}^n)$ be a minimizer of $\mathscr{L}$ over $\scra{v(a)}{v(b)}$, and $t \in (a,b)$ be such that $D^{\pm}v(t)$ both exist, and each component of both is finite.
	
	Then $D^{-}v(t) = D^{+}v(t)$.
\end{corollary}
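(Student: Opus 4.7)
The plan is to argue by contradiction: assume $L_- := D^{-}v(t) \neq D^{+}v(t) =: L_+$, with both vectors finite, and use Theorem~\ref{thm:apcontfinite} together with Lemma~\ref{key-lemma} to insert an affine segment across $t$ that strictly decreases the energy, contradicting the minimality of $v$.

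First I would invoke Theorem~\ref{thm:apcontfinite}: since both one-sided derivatives exist and are finite, $v'$ is approximately continuous at $t$ from the left with value $L_-$ and from the right with value $L_+$. (This is the content of the theorem in the general case, before the normalisation $v(t) = 0$, $D^{\pm}v(t) = 0$ used in its proof.) Set $\kappa := \|L_+ - L_-\| > 0$ and $c := \kappa/6$; then I would fix $\alpha \in (0,1)$ close to $1$, say $\alpha = 7/8$, so that the ``bad'' sets on which $\|v'(r) - L_\pm\| \geq c$ have density at most $1 - \alpha$ in sufficiently small one-sided neighbourhoods of $t$.

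Next I would apply Lemma~\ref{key-lemma} to the symmetric interval $(t - \sigma, t + \sigma)$ with the competitor $u$ equal to the affine interpolation between the endpoint values. Two things need to be checked as $\sigma \downarrow 0$: (i) the slope $l_\sigma := (v(t+\sigma) - v(t-\sigma))/(2\sigma)$ tends to $(L_+ + L_-)/2$, because the one-sided derivatives exist at $t$; and (ii) the length condition $\|v(t+\sigma) - v(t-\sigma)\| \leq R\cdot 2\sigma$ of the lemma holds for some fixed $R$ depending only on $\|L_\pm\|$, $|a|,|b|, \|v\|_\infty$, and $c$. Property~(i) then forces $\|l_\sigma - L_\pm\| \geq \kappa/3 = 2c$ for all small $\sigma$, so on the subset of $(t - \sigma, t)$ where $\|v'(r) - L_-\| < c$ we have $\|v'(r) - l_\sigma\| \geq c$, and symmetrically on the right. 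Choosing $R$ large enough that $R^{-1} \leq c$ and picking $\epsilon \leq \tau_R/8$ in the lemma, approximate continuity makes the set $\{r \in (t-\sigma, t+\sigma) : \|v'(r) - l_\sigma\| \geq R^{-1}\}$ have measure at least, say, $(7/4)\sigma$ for all sufficiently small $\sigma$.

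Plugging these estimates into the conclusion of Lemma~\ref{key-lemma} yields
\[
\int_{t-\sigma}^{t+\sigma} L(s, v(s), v'(s))\, ds \;\geq\; \int_{t-\sigma}^{t+\sigma} L(s, u(s), u'(s))\, ds + \tau_R\cdot\tfrac{7}{4}\sigma - \tfrac{\tau_R}{8}\cdot 2\sigma \;>\; \int_{t-\sigma}^{t+\sigma} L(s, u(s), u'(s))\, ds,
\]
since $u$ agrees with $v$ outside $(t-\sigma, t+\sigma)$, this contradicts the minimality of $v$ and forces $L_- = L_+$. The only real subtlety is keeping the approximate-continuity density estimate uniform on both sides simultaneously, which is immediate from the one-sided statements of Theorem~\ref{thm:apcontfinite}; no new regularity of $L$ beyond the strict convexity assumption~\eqref{convexity-assump} is needed.
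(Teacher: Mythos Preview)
Your argument is correct and follows the same overall strategy as the paper: assume $D^{-}v(t)\neq D^{+}v(t)$, invoke the one-sided approximate continuity from Theorem~\ref{thm:apcontfinite}, insert a single affine segment across $t$, and use Lemma~\ref{key-lemma} to obtain a strict energy decrease.

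The difference lies in how the affine segment is chosen. The paper first normalises so that one coordinate, say $v_1$, has $-D^{-}v_1(0)=D^{+}v_1(0)=m>0$, and then uses the intermediate value theorem to find $s_2\in(0,3s_1)$ with $v_1(s_2)=v_1(-s_1)$; the resulting affine piece has $u_1'=0$, so the discrepancy $\|v'-u'\|$ is detected entirely through the first coordinate. You instead take the symmetric interval $(t-\sigma,t+\sigma)$ and observe that the secant slope $l_\sigma$ converges to the midpoint $(L_++L_-)/2$, which is at distance $\kappa/2$ from each of $L_\pm$; the triangle inequality then gives $\|v'-l_\sigma\|\geq c$ on the sets where $v'$ is close to $L_\pm$. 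Your route avoids the IVT step and the coordinate reduction, at the mild cost of tracking the vector geometry directly; the paper's route isolates the kink in a single scalar component, which makes the estimate on $\|v'-u'\|$ a one-line computation. Both arguments use exactly the same ingredients and yield the same contradiction.
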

\begin{proof}
	Without loss of generality we may assume that $t=0 \in (a, b)$ and $v(t) = 0$.  We assume for a contradiction that $D^{-}v(0) \neq D^{+}v(0)$, so we may further suppose without loss of generality that $-D^{-}v_1(0) = D^{+}v_1(0) = m > 0$, say.
	
	Let $\delta \in (0, \min\{|a|, |b|\})$ be as given by lemma~\ref{key-lemma} for $R \geq\|D^{+}v(0)\|+ \|D^{-}v(0)\| + m + 2 m^{-1} + 1$, and $\epsilon \leq \tau_R / 4$. 
	
	By theorem~\ref{thm:apcontfinite} we can choose $s_0 \in (0, \delta/ 3)$ such that $s \in (0,s_0)$ satisfies 
	\begin{gather}
	\left\| (3s)^{-1}v(3s) - D^{+}v(0) \right\| <m/2 \ \text{and} \ \left\| (-3s)^{-1}v(-3s) - D^{-}v(0)\right\| < m/2; \ \textrm{and}  \label{vquots}\\
	\begin{cases}
	\meas{ \{ r \in (0, s) : \| v'(r)  - D^{+}v(0)\| \geq m/2 \}} < s/2,&\\
	\meas{ \{ r \in (-s, 0) : \|v'(r) - D^{-}v(0)\| \geq m/2 \}} < s/2.& 
	\end{cases}
	\label{apcont}
	\end{gather}
	Fix $s_1 \in (0, s_0)$.  Now,~\eqref{vquots} implies that $\left| \frac{v_1(-s_1)}{-s_1} + m\right| < m/2$, and hence that $ms_1 / 2< v_1 (-s_1) < 3 m s_1 /2$, and, furthermore, that $\left| \frac{v_1(3s_1)}{3s_1} - m\right| < m/2$, and hence that $v_1(3 s_1) > 3ms_1 / 2$.  So $v_1(0) = 0 < ms_1 / 2<  v_1(-s_1) < 3ms_1/2 < v_1(3s_1)$, and therefore there exists $s_2 \in (0, 3s_1)$ such that $v_1(s_2)= v_1(-s_1)$.  Define $u \in \scra{v(a)}{v(b)}$ by 
	\[
	u(r) \df 
	\begin{cases}
	v(r) & r \notin (-s_1, s_2),\\
	\textrm{affine} & \textrm{otherwise}.
	\end{cases}
	\]
	Then on $(-s_2, s_1)$,  
	\begin{align*}
	\|u'\| \leq \frac{\|v(s_2)\|}{s_2 + s_1} + \frac{\|v(-s_1)\|}{s_2 + s_1}
	& \leq \frac{ \| v (s_2)\|} {s_2} + \frac{\| v(-s_1)\|}{s_1} \\
	& \leq \|D^{+}v(0)\| + m/2 + \|D^{-}v(0)\| +m/2 \\
	& \leq R.
	\end{align*}
	For $r \in (0, s_2)$, we see that $\|v'(r) - D^{+}v(0)\| \leq m/2$ implies that $\|v'(r) - u'\| \geq |v_1'(r) - u_1'(r)| = |v_1'(r)| > m/2$, hence, by~\eqref{apcont}, that 
	\begin{align*}
	\meas{ \{ r \in (0, s_2) : \|v'(r)- u' \| \geq m/2 \} } 
	& \geq \meas{ \{ r \in (0, s_2) : \|v'(r) - D^{+}v(0)\| \leq m/2\}  }\\
	& \geq s_2 / 2.
	\end{align*}
	Similarly, 
	\begin{align*}
	\meas{ \{r \in (-s_1, 0) : \|v'(r) - u'\| \geq m/2 \}  } 
	&  \geq \meas{ \{ r \in (-s_1, 0): \|v'(r) - D^{-}v(0)\| \leq m/2\}} \\
	& \geq s_1 / 2.
	\end{align*}
	So lemma~\ref{key-lemma} implies that 
	\begin{align*}
	\lefteqn{\int_{-s_1}^{s_2}L(r, v, v') \, dr}\\
	& \geq \int_{-s_1}^{s_2} L(r, u, u') \, dr+ \tau_R \meas{ \{ r \in (-s_1, s_2) : \|v'(r) - u'\| \geq m/2\}} - \tau_R (s_1 + s_2)/4\\
	& \geq \int_{-s_1}^{s_2} L(r, u, u') \, dr+ \tau_R (s_1 + s_2)/2 - \tau_R (s_1 + s_2)/ 4 \\
	&  = \int_{-s_1}^{s_2} L(r, u, u')\, dr + \tau_R (s_1 + s_2)/4,
	\end{align*}
	which is a contradiction.
\end{proof}
\begin{corollary}
	\label{thm:infinite one side}
	Let $v \in W^{1,1}((a,b); \mathbb{R}^n)$ be a minimizer of $\mathscr{L}$ over $\scra{v(a)}{v(b)}$, and suppose that $t \in (a,b)$ is such that $D^{\pm}v(t)$ both exist.
	
	Then if at least one component of one one-sided derivative is infinite, then at least one component of the other one-sided derivative is infinite.  Note that this statement does {\it not} assert that there is one coordinate function with infinite left and right derivatives, and in fact I do not know whether such an assertion can be made in general.
\end{corollary}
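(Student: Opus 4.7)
\emph{Proof proposal.}  I argue by contradiction, constructing a piecewise-affine competitor via Lemma~\ref{key-lemma} in the spirit of Corollary~\ref{cor:nokinkfinite}.  By the evident left-right symmetry it suffices to prove: if every component of $D^{+}v(t)$ is finite, then so is every component of $D^{-}v(t)$.  After translating and subtracting an affine function, assume $t = 0 \in (a,b)$, $v(0) = 0$, $D^{+}v(0) = p \in \mathbb{R}^n$, and suppose for contradiction that some coordinate $k$ has $D^{-}v_k(0) \in \{+\infty, -\infty\}$, so that $\|v(-s)\|/s \geq |v_k(-s)|/s \to \infty$ as $s \downarrow 0$.

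Fix $R \geq |a| + |b| + \|v\|_{\infty} + 2\|p\| + 4$, set $\epsilon = \tau_R / 4$, and let $\delta > 0$ be the constant supplied by Lemma~\ref{key-lemma}.  For small $s_1 > 0$ define $s_2 = 2\|v(-s_1)\|/R$, and let $u \in \scra{v(a)}{v(b)}$ coincide with $v$ off $(-s_1, s_2)$ and be affine on $(-s_1, s_2)$, with slope
\[
l = \frac{v(s_2) - v(-s_1)}{s_1 + s_2}.
\]
The blow-up $\|v(-s_1)\|/s_1 \to \infty$ forces $s_2 \gg s_1$, while continuity of $v$ at $0$ gives $s_2 \to 0$, and finiteness of $D^{+}v(0)$ gives $\|v(s_2)\|/s_2 \to \|p\|$.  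For $s_1$ small one therefore has $s_1 + s_2 \leq 2 s_2 \leq \delta$ and $\|l\| \leq \|v(s_2)\|/s_2 + \|v(-s_1)\|/(s_1+s_2) \leq (\|p\| + 1) + R/2 \leq R$, so Lemma~\ref{key-lemma} applies on $(-s_1, s_2)$.  Moreover, the contribution $-v(-s_1)/(s_1+s_2)$ of norm approximately $R/2$ dominates the difference $l - p$, yielding $\|l - p\| \geq R/4$ for $s_1$ sufficiently small.

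The final ingredient is Theorem~\ref{thm:apcontfinite}: since $D^{+}v(0)$ is finite, $v'$ is approximately continuous on the right at $0$, so for $s_1$ small enough one has $\lambda(\{r \in (0, s_2) : \|v'(r) - p\| \geq R/8\}) \leq s_2/8$.  On the complementary set of measure at least $7 s_2/8$, the triangle inequality gives $\|v'(r) - l\| \geq \|l - p\| - \|v'(r) - p\| \geq R/8 \geq R^{-1}$, and Lemma~\ref{key-lemma} then delivers
\[
\int_{-s_1}^{s_2} L(r, v, v')\, dr \geq \int_{-s_1}^{s_2} L(r, u, u')\, dr + \tau_R \cdot \tfrac{7 s_2}{8} - \epsilon(s_1 + s_2) \geq \int_{-s_1}^{s_2} L(r, u, u')\, dr + \tfrac{3 \tau_R s_2}{8},
\]
strictly decreasing the energy and contradicting minimality of $v$.

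The main obstacle is engineering an interval on which Lemma~\ref{key-lemma} both applies (bounded affine slope) and delivers strictly positive gain (large measure on which $v'$ and $l$ differ by $\geq R^{-1}$).  An infinite one-sided derivative on the left forces the interval to be heavily asymmetric, $s_2 \gg s_1$, to keep $\|l\| \leq R$; but then any gain extracted from $(-s_1, 0)$ is of order $s_1 = o(s_2)$, swamped by the Lemma~\ref{key-lemma} error $\epsilon(s_1 + s_2) = \Theta(s_2)$.  The resolution is that the asymmetric choice of $s_2$ forces $l$ to sit a distance $\Theta(R)$ from $p$, while the finiteness of $D^{+}v(0)$ lets Theorem~\ref{thm:apcontfinite} pin $v'$ close to $p$ throughout $(0, s_2)$; extracting the gain from the right rather than the left is the crux.
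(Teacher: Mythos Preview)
Your proof is correct, and it takes a genuinely different route from the paper's.

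The paper normalizes the finite one-sided derivative to $0$ (say $D^{-}v(0)=0$, with $D^{+}v(0)$ having an infinite component) and invokes \emph{both} Theorem~\ref{thm:apcontfinite} and Theorem~\ref{thm:apcontinfinite}.  It then fixes a point $-s_1$ on the finite side and locates $s_2 \in (0, s_1)$ on the infinite side by an intermediate-value argument on the set $\{r : \|v(r)-v(-s_1)\| < |r+s_1|\}$, yielding $\|u'\|=1$ exactly.  The gain in Lemma~\ref{key-lemma} is extracted from \emph{both} halves of $(-s_1, s_2)$: on the finite side most of $v'$ is near $0$, on the infinite side most of $\|v'\|$ is large, and in both cases $v'$ differs from the unit slope $u'$ by at least $1/2$.

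You instead fix $s_1$ on the \emph{infinite} side and define $s_2 = 2\|v(-s_1)\|/R$ on the finite side explicitly, so that $s_2 \gg s_1$ and the affine slope $l$ is forced a distance $\Theta(R)$ from $p=D^{+}v(0)$.  You then use only Theorem~\ref{thm:apcontfinite} to pin $v'$ near $p$ on $(0,s_2)$ and extract all of the gain from that side.  This is a genuine simplification: you avoid any appeal to Theorem~\ref{thm:apcontinfinite}, at the modest cost of a less symmetric competitor and slightly fussier bookkeeping with $R$.  One cosmetic point: the phrase ``subtracting an affine function'' is not actually used, since you keep $D^{+}v(0)=p$ general; only the translation $t=0$, $v(0)=0$ is needed.
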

\begin{proof}
	We suppose for a contradiction that the result is false.  We consider the case in which at least one component of $D^{+}v(t)$ is infinite, but all components of $D^{-}v(t)$ are finite; the other case is similar.
	
	Without loss of generality we may assume that $t= 0 \in (a,b)$, $v(t) = 0$, and $D^{-}v(0) = 0$.
	
	Let $\delta \in (0, \min\{|a|, |b|\})$ be as given by lemma~\ref{key-lemma} for $R \geq 2$ and $\epsilon  \leq \tau_R / 4$.
	By theorems~\ref{thm:apcontfinite} and~\ref{thm:apcontinfinite} we can choose $s_0 \in (0, \delta)$ such that $s \in (0, s_0)$ satisfies
	\begin{gather}
	\frac{\|v(-s)\|}{s} < 1/2,\ \textrm{and}\ \frac{\|v(s)\|}{s} > 3;\label{vquots2}\ \textrm{and}\\
	\begin{cases}
	\meas{ \{ r \in (-s, 0):  \|v'(r)\| \geq 1/2 \}  } < s/2, &\\
	\ \meas{ \{ r \in (0, s) : \|v'(r)\| \leq 2\} } < s/2.&
	\end{cases}
	\label{apcont2}
	\end{gather}
	Fix $s_1 \in (0, s_0)$, and consider the set  $\{ r \in  (-s_1, s_0) : \| v(r) - v(-s_1) \| < |r + s_1| \}$.  By~\eqref{vquots2} we know that $0$ lies in this set, but since 
	\[
	\|v(s_1) - v(-s_1)\| \geq \|v(s_1)\| - \|v(-s_1)\| \geq 3s_1 - s_1/2 > 2s_1,
	\]
	we see that $s_1$ does not.  Therefore there exists $s_2 \in (0, s_1)$ such that $\|v(s_2) - v(-s_1)\| = |s_2 + s_1|$.  Define $u \in \scra{v(a)}{v(b)}$ by
	\[
	u(r) \df
	\begin{cases}
	v(r) & r \notin (-s_1, s_2),\\
	\textrm{affine} & \textrm{otherwise}.
	\end{cases}
	\]
	Then on $(-s_1, s_2)$, we have that $\|u'\| = 1$,  and, furthermore, that $\|v'(r)\| \leq 1/2$ implies that $\|v'(r) - u'\| \geq 1/2$, and $\|v'(r)\| \geq 2$ implies that $\| v'(r) - u'\| \geq 1/2$, so by~\eqref{apcont2}, 
	\begin{align*}
	\meas{ \{r \in (-s_1, 0):  \|v'(r) - u'\| \geq 1/2 \}}& \geq \meas{ \{r \in (-s_1, 0) :  \|v'(r)\| \leq 1/2 \}} \geq s_1/2,
	\shortintertext{and}
	\meas{ \{r \in (0, s_2) :  \|v'(r) - u'\| \geq 1/2 \}} & \geq \meas{ \{ r \in (0, s_2) : \|v'(r)\| \geq 2\} } \geq s_2 / 2.
	\end{align*}
	So lemma~\ref{key-lemma} implies that 
	\begin{align*}
	\lefteqn{\int_{-s_1}^{s_2} L(r, v, v') \, dr}\\
	& \geq \int_{-s_1}^{s_2} L(r, u, u') \, dr + \tau_R \meas{ \{ r \in (-s_1, s_2) : \|v'(r) - u'\| \geq 1/2\} } + \tau_R (s_1 + s_2)/4\\
	& \geq \int_{-s_1}^{s_2} L(r, u, u')\, dr + \tau_R(s_1 + s_2)/2 - \tau_R (s_1 + s_2)/4 \\
	& = \int_{-s_1}^{s_2} L(r, u, u') \, dr + \tau_R(s_1 + s_2)/4,
	\end{align*}
	which is a contradiction.
\end{proof}
More information is available about the behaviour of infinite derivatives if we can locate them only in one coordinate function.
\begin{theorem}
	\label{apcontv1}
	Let $v \in W^{1,1}((a,b); \mathbb{R}^n)$ be a minimizer of $\mathscr{L}$ over $\scra{v(a)}{v(b)}$, and suppose for some $t \in [a,b]$  that, respectively,  $t \in (a, b]$, $D^{-}v_1(t)$ exists as an infinite value, and $v_j$ are Lipschitz in a left-neighbourhood of $t$ for $2 \leq j \leq n$; or that $t \in [a, b)$, $D^{+}v_1(t)$ exists as an infinite value, and $v_j$ are Lipschitz in a right-neighbourhood of $t$ for $2 \leq j \leq n$.  
	
	Then $v_1'$ is approximately continuous on the left, respectively right, at $t$, in the sense that for all $m>0$, 
	\begin{align*}
	\lim_{s \uparrow t} (t-s)^{-1}\meas{ \{ r \in (s,t) : |v_1'(r)| \leq m\}} & = 0,\ \textrm{respectively}\\
	\lim_{s \downarrow t} (s - t)^{-1}\meas{ \{ r \in (t, s) : |v_1'(r)| \leq m \} } & = 0.
	\end{align*}
\end{theorem}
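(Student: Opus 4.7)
The plan is to adapt the argument of Theorem~\ref{thm:apcontinfinite} essentially verbatim, using the Lipschitz hypothesis on $v_2, \dots, v_n$ as the single new ingredient needed to apply Lemma~\ref{key-lemma}.  I treat the right-sided case; the left-sided case is entirely symmetric.  After translating the domain and subtracting a constant, one may assume $t = 0 \in [a,b)$ and $v(0) = 0$; the hypothesis gives an $L > 0$ and a $\sigma > 0$ such that each of $v_2, \dots, v_n$ is $L$-Lipschitz on $[0, \sigma]$, and $|v_1(s)|/s \to \infty$ as $s \downarrow 0$.  Suppose for a contradiction that there are $m > 1$, $\alpha \in (0,1)$, and arbitrarily small $s > 0$ with $\meas{\{r \in (0,s) : |v_1'(r)| \leq m\}} > \alpha s$.

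Following the proof of Theorem~\ref{thm:apcontinfinite}, I fix $R \geq \sqrt{(2m)^2 + (n-1)L^2} + 1$, let $\delta > 0$ be as given by Lemma~\ref{key-lemma} for this $R$ and $\epsilon \leq \tau_R\alpha/16$, and pick $s_0 \in (0, \min\{\sigma, \delta/2\})$ such that the contradictory measure estimate holds for some $s \leq s_0$ and $|v_1(s)|/s \geq 3m$ for every $s \in (0, s_0)$.  For each $s \in (0, s_0)$ with $|v_1'(s)| \leq m$, let $(s^-, s)$ be the left connected component at $s$ of the set $\{r \in (0, s_0) : |v_1(r) - v_1(s)| < 2m|r-s|\}$ and set $\sigma_s := s - s^- > 0$; the bound $|v_1(s)| \geq 3ms > 2ms$ forces $s^- > 0$, so that $|v_1(s^-) - v_1(s)| = 2m\sigma_s$.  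The crucial point is now that while the first coordinate of the chord from $s^-$ to $s$ has slope $\pm 2m$, the Lipschitz hypothesis bounds the remaining coordinate slopes by $L$, giving $\|v(s) - v(s^-)\| \leq \sqrt{(2m)^2 + (n-1)L^2}\, \sigma_s \leq R\sigma_s$.  This is precisely the chord bound required by Lemma~\ref{key-lemma}.

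From here the proof of Theorem~\ref{thm:apcontinfinite} transfers word-for-word.  A Vitali-type extraction from $\bigcup_{s}(s-\sigma_s, s + \alpha\sigma_s/8)$ yields a pairwise disjoint family $\{(s_i - \sigma_i, s_i + \alpha\sigma_i/8)\}_{i=1}^K$ with $\bigcup \mathcal{I} \subseteq (0, 2s_0)$ and $\meas{\bigcup_{i}(s_i - \sigma_i, s_i) \cap \{|v_1'| \leq m\}} \geq \alpha s_0/4$.  Define $u$ by replacing $v$ with its affine interpolant on each $(s_i - \sigma_i, s_i)$; on each such interval $|u_1'| = 2m$, so $|v_1'(r)| \leq m$ implies $\|v'(r) - u'\| \geq |v_1'(r) - u_1'| \geq m \geq R^{-1}$.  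Applying Lemma~\ref{key-lemma} on each $(s_i - \sigma_i, s_i)$ and summing produces a strict decrease
\[
\int_{\bigcup_i (s_i - \sigma_i, s_i)} L(r, v, v') \,dr \;\geq\; \int_{\bigcup_i (s_i - \sigma_i, s_i)} L(r, u, u') \,dr \;+\; \tfrac{1}{8}\alpha \tau_R s_0,
\]
contradicting the minimality of $v$.  The only substantive obstacle is ensuring the hypotheses of Lemma~\ref{key-lemma} are verifiable, and that is exactly what the Lipschitz hypothesis on $v_2, \dots, v_n$ achieves: without it, the chord slopes of the affine replacements in the non-first coordinates could blow up, and no uniform $R$ would suffice.
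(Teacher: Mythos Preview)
Your proposal is correct and follows essentially the same approach as the paper: both set up the same contradiction hypothesis, invoke Lemma~\ref{key-lemma} on affine replacements over carefully chosen intervals to the left of each point $s$ with $|v_1'(s)| \leq m$, and extract a disjoint subcollection via the same Vitali-type step to sum the gains. The only cosmetic difference is that the paper defines the connected-component set using the full vector, $\{r : \|v(r) - v(s)\| < R|r - s|\}$, and takes $R \geq 2m + \sum_{j \geq 2}\mathrm{Lip}(v_j)$, whereas you define it using only $v_1$, $\{r : |v_1(r) - v_1(s)| < 2m|r - s|\}$, and then use the Lipschitz bound on the other coordinates to control the chord slope \emph{a posteriori}; both routes arrive at the same application of Lemma~\ref{key-lemma} and the same contradiction.
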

Note that we do not assume that the derivatives of the components $v_j$ for $2 \leq j \leq n$ exist at $t$.
\begin{proof}
	We consider the case in which $t \in [a, b)$; the other case is similar.
	
	Without loss of generality we may assume that $t=0 \in [a, b)$ and $v(t) = 0$.  Suppose for a contradiction that there exist $m \in (1, \infty)$, $\alpha \in (0,1)$, and arbitrarily small $s > 0$ such that
	\begin{equation}
	\label{contraassump3}
	\meas{ \{ r \in (0, s) :  |v_1'(r) | \leq m\}} > \alpha s.
	\end{equation}
	Choose $\eta \in (0, b)$ such that $v_j$ are Lipschitz on $[0, \eta)$ for $2 \leq j \leq n$, and let $\delta \in (0, b)$ be as given by lemma~\ref{key-lemma} for $R \geq 2m + \sum_{j=2}^n \mathrm{Lip}(v_j\vert_{[0, \eta)})$, and $\epsilon \leq \alpha \tau_R / 16$.
	
	Choose $s_0 \in (0, \min \{\delta, \eta\}/2)$ such that~\eqref{contraassump3} holds and $s \in (0, s_0)$ satisfies 
	\begin{equation}
	\label{v1 big}
	|v_1(2s)|/2s > 2R.
	\end{equation}
	Consider $s \in (0, s_0)$ such that $|v_1'(s)|\leq m$.  Then $|v_1(r) - v_1(s)|/ |r-s| < 2m$ for $r$ in some neighbourhood of $s$ contained in $(0, s_0)$, and so 
	\[
	\frac{ \| v(r) - v(s)\|}{|r-s|} < 2m + \sum_{j=2}^n \mathrm{Lip}(v_j \vert_{[0, \eta)})\leq  R,
	\]
	for $r$ in some neighbourhood of $s$ contained in $(0, s_0)$.  So there exist $s^{\pm}$ such that $(s^{-}, s)$ and $(s, s^{+})$ are connected components of the set $\{ r \in (0, s_0) : \|v(r) - v(s)\| < R |r-s|\}$.  Note that  $s^{-} > 0$ by~\eqref{v1 big}.  We now proceed similarly to the proof of theorem~\ref{thm:apcontinfinite}.  Let $\sigma_s \df s - s^{-} > 0$.  By the Besicovitch covering theorem we may extract from the collection $\{(s-\sigma_s, s+ \alpha \sigma_s/8):s \in (0, s_0),\ |v_1'(s)|\leq m\}$ a pairwise disjoint collection $\mathcal{I} = \{ (s_i - \sigma_i, s_i + \alpha \sigma_i/ 8)\}_{i=1}^{\infty}$, say, such that $\bigcup \mathcal{I} \subseteq (0, 2s_0)$, and 
	\begin{equation}
	\label{big cover}
	\lambda \left( \bigcup_{i=1}^{\infty} (s_i - \sigma_i, s_i) \cap \{ r \in (0, s_0) : |v_1'(r) | \leq m\} \right) \geq \alpha s_0 / 4.
	\end{equation}
	Define $u \in \scra{v(a)}{v(b)}$ by
	\[
	u(r) \df 
	\begin{cases}
	v(r) & r \notin \bigcup_{i=1}^{\infty} (s_i - \sigma_i, s_i),\\
	\textrm{affine} & \textrm{otherwise}.
	\end{cases}
	\]
	Fix $1 \geq 1$.  On $(s_i - \sigma_i, s_i)$, we have that $\|u ' \| = R$, and, furthermore, that  $|v_1'(r)| \leq m$ implies that $\|v'(r)\| \leq m + \sum_{j=2}^n \mathrm{Lip}(v_j \vert_{[0, \eta)})$, which implies that $\|v'(r)  - u'\| \geq m$.  So  
	\[
	\meas{ \{ r \in (s_i - \sigma_i, s_i) : \|v'(r) - u'\| \geq m \} } \geq \meas{ \{ r \in (s_i - \sigma_i, s_i):  |v_1'(r) | \leq m\} }.
	\]
	So lemma~\ref{key-lemma} implies that
	\begin{align*}
	\lefteqn{\int_{s_i -\sigma_i}^{s_i} L(r, v, v')\, dr} \\
	&  \geq \int_{s_i - \sigma_i}^{s_i} L(r, u, u')\, dr + \tau_R \meas{ \{ r\in (s_i - \sigma_i, s_i) :  |v_1'(r)| \leq m \}} - \alpha \tau_R \sigma_i / 16,
	\end{align*}
	and so, summing, since $\mathcal{I}$ is pairwise disjoint and $\bigcup \mathcal{I} \subseteq (0, 2s_0)$, gives, by~\eqref{big cover}, that
	\begin{align*}
	\lefteqn{\int_{\bigcup_{i = 1}^{\infty} (s_i - \sigma_i, s_i)} L(r, v, v') \, dr }\\
	& \geq \int_{\bigcup_{i = 1}^{\infty} (s_i - \sigma_i, s_i)} L(r, u, u')\, dr \\
	& \phantom{=} {}+ \tau_R \meas{ \bigcup_{i = 1}^{\infty} (s_i - \sigma_i, s_i)\cap \{ r \in (0, s_0) : |v_1'(r)|\leq m \} } - \alpha \tau_R \meas{ \bigcup \mathcal{I}} / 16\\
	& \geq \int_{\bigcup_{i = 1}^{\infty}(s_i - \sigma_i, s_i) } L(r, u, u')\, dr +  \alpha \tau_R s_0 / 4 - \alpha \tau_R s_0 / 8\\
	& = \int_{\bigcup_{i = 1}^{\infty}(s_i - \sigma_i, s_i)} L(r, u ,u') \, dr+ \alpha \tau_R s_0 / 8,
	\end{align*}
	which is a contradiction.
\end{proof}
\begin{corollary}
	Let $ v \in W^{1,1}((a,b);\mathbb{R}^n)$ be a minimizer of $\mathscr{L}$ over $\scra{v(a)}{v(b)}$, and suppose that $t \in (a,b)$ is such that $D^{\pm}v(t)$ both exist, and $v_j$ is Lipschitz in a neighbourhood of $t$ for $2 \leq j \leq n$.  
	
	Then if one one-sided derivative of $v_1$ is infinite at $t$, then the two one-sided derivatives of $v_1$ are equal to the same infinite value.
\end{corollary}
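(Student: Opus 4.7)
The plan is to first use Corollary~\ref{thm:infinite one side} to reduce the problem to showing that the two (necessarily infinite) one-sided derivatives of $v_1$ agree, and then to argue by contradiction, replacing $v$ on a short interval around $t$ by a piecewise affine competitor whose first coordinate is constant and invoking Lemma~\ref{key-lemma} together with Theorem~\ref{apcontv1}.

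First I would observe that Corollary~\ref{thm:infinite one side} forces an infinite component of $D^{+}v(t)$ to be matched by an infinite component of $D^{-}v(t)$ and vice versa; since $v_j$ is Lipschitz near $t$ for $2 \leq j \leq n$, each $v_j$ has finite one-sided derivatives at $t$, so the infinite component must on both sides be the first coordinate. Hence both $D^{-}v_1(t)$ and $D^{+}v_1(t)$ exist as signed infinities, and only the equality of these two infinities remains in question. Suppose for contradiction they differ; after translating so that $t \in (a,b)$ and $v(t) = 0$, we may without loss of generality assume $D^{-}v_1(t) = -\infty$ and $D^{+}v_1(t) = +\infty$ (the other mismatched case reduces to this by replacing $v_1$ with $-v_1$). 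In this configuration $v_1$ lies strictly above $0$ on a punctured neighbourhood of $t$.

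Next, for small $s_1 > 0$ I would take $s_2 = s_2(s_1) > 0$ to be minimal with $v_1(t + s_2) = v_1(t - s_1)$, and define $u \in \scra{v(a)}{v(b)}$ equal to $v$ off $(t - s_1, t + s_2)$ and affine on that interval. By construction $u_1$ is constant on $(t - s_1, t + s_2)$, while the remaining coordinate slopes are bounded by the local Lipschitz constants of $v_2, \dots, v_n$, so $\|u'\|$ is uniformly bounded by some $R$. The crucial quantitative step, which I expect to be the main obstacle, is to verify that $s_2(s_1) \to 0$ as $s_1 \to 0$: from $D^{+}v_1(t) = +\infty$ one extracts, for any $M > 0$, a right neighbourhood of $0$ on which $v_1(t + r) > Mr$, which together with the intermediate value theorem and continuity of $v_1$ forces $s_2(s_1) \leq v_1(t - s_1)/M \to 0$. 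This converts the qualitative hypothesis on $D^{+}v_1(t)$ into the needed genuine smallness of $s_2$ relative to $s_1$.

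With $s_1, s_2$ small I would then invoke Lemma~\ref{key-lemma} on $(t - s_1, t + s_2)$. Since $u_1' = 0$ on this interval we have $\|v'(r) - u'\| \geq |v_1'(r)|$ there, and Theorem~\ref{apcontv1} applied separately on the left and right of $t$ guarantees that $|v_1'| \geq R^{-1}$ (indeed, much larger) on a subset of $(t - s_1, t + s_2)$ of density approaching $1$ as $s_1 \to 0$. Substituting into the conclusion of Lemma~\ref{key-lemma} produces a lower bound of the form $(1-\alpha)\tau_R(s_1+s_2) - \epsilon(s_1+s_2)$ for the energy difference $\int L(\cdot, v, v') - \int L(\cdot, u, u')$ over $(t - s_1, t + s_2)$, which is strictly positive once $\alpha$ and $\epsilon$ are chosen small enough by picking $s_1$ small and the parameter $\epsilon$ in Lemma~\ref{key-lemma} small. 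This contradicts minimality of $v$ and completes the proof.
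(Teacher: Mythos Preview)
Your proposal is correct and follows essentially the same route as the paper: reduce via Corollary~\ref{thm:infinite one side} to the case $D^{-}v_1(t)=-\infty$, $D^{+}v_1(t)=+\infty$, locate $s_2>0$ with $v_1(t+s_2)=v_1(t-s_1)$, go affine on $(t-s_1,t+s_2)$ so that $u_1'=0$ and $\|u'\|$ is bounded by the Lipschitz data of $v_2,\dots,v_n$, then combine the one-sided approximate continuity of $v_1'$ with Lemma~\ref{key-lemma} for a contradiction. The only cosmetic differences are that the paper fixes a single scale $s_0$ in advance and finds $s_2\in(0,s_0)$ directly via the intermediate value theorem (so never needs your limiting argument $s_2(s_1)\to 0$), and cites Theorem~\ref{thm:apcontinfinite} rather than Theorem~\ref{apcontv1}, which is equivalent here since the Lipschitz bound on $v_2,\dots,v_n$ gives $\{|v_1'|\le m\}\subseteq\{\|v'\|\le m+\sum_{j\ge 2}\mathrm{Lip}(v_j)\}$.
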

\begin{proof}
	We suppose that $D^{+}v_1(t) = + \infty$; the other cases are similar.  
	
	Without loss of generality we may assume that $t= 0 \in (a, b)$ and $v(t) = 0$.  By corollary~\ref{thm:infinite one side}, we are just required to prove that $D^{-}v_1(0) > - \infty$.  Suppose for a contradiction that $D^{-}v_1(0) = - \infty$.  Choose $\eta\in (0, \min\{|a|, |b|\})$ such that $v_j$ are Lipschitz on $(-\eta, \eta)$ for $2 \leq j \leq n$.  Let $\delta \in (0, \min\{ |a| , |b|\})$ be as given by lemma~\ref{key-lemma} for $R \geq 1 + \sum_{j=2}^n \mathrm{Lip}(v_j \vert_{(-\eta, \eta)})$ and $\epsilon \leq \tau_R / 4$.   By theorem~\ref{thm:apcontinfinite} we may choose $s_0 \in (0,  \min \{ \eta, \delta\})$ such that $s \in (0, s_0)$ satisfies
	\begin{gather}
	\frac{v_1(s)}{s} > 1 \, \textrm{and}\ \frac{v_1(-s)}{-s} < -1\label{vquots4};\ \textrm{and} \\
	\begin{cases}
	\meas{ \{r \in (-s, 0) :  |v_1'(r)| \leq 3R \} } < s/ 2,&\\
	\meas{ \{ r \in (0, s) :  |v_1'(r)| \leq 3R \} } < s/2. &
	\end{cases}
	\label{apcont5}
	\end{gather}
	By~\eqref{vquots4}, $v_1(s_0) \geq s_0 > 0 = v_1(0)$, so we can choose $s_1 \in (0, s_0)$ such that $v_1(-s_1) < v_1(s_0)$.  Since, by~\eqref{vquots4}, $v_1(0) = 0 < s_1 < v_1(-s_1) < v_1(s_0)$, there exists $s_2 \in (0, s_0)$ such that $v_1(-s_1) = v_1(s_2)$.  Then $\|v(-s_1) - v(s_2)\| \leq \sum_{j = 2}^n \mathrm{Lip}(v_j \vert_{(-\eta, \eta)}) |s_2 + s_1| \leq R |s_2 + s_1|$.  Define $u \in \scra{v(a)}{v(b)}$ by
	\[
	u(r) \df 
	\begin{cases}
	v(r) & r \notin (-s_1, s_2),\\
	\textrm{affine} & \textrm{otherwise}.
	\end{cases}
	\]
	So  on $(-s_1, s_2)$ we have that $\|u'\| \leq R $, and, furthermore, that $|v_1'(r)| \geq 3R$ implies that $\|v'(r)\| \geq 2R $ and hence that $\|v'(r) - u'\| \geq R$, and so, by~\eqref{apcont5}, 
	\begin{align*}
	\meas{ \{ r \in (-s_1, s_2)  : \| v'(r) - u'\| \geq R \}} 
	& \geq \meas{ \{r \in (-s_1, s_2) :  |v_1'(r)| \geq 3R \} }\\
	& \geq (s_1 + s_2)/2.
	\end{align*}
	So lemma~\ref{key-lemma} implies that
	\begin{align*}
	\lefteqn{\int_{-s_1}^{s_2} L(r, v, v') \, dr}\\
	& \geq \int_{-s_1}^{s_2} L(r, u, u') \, dr+ \tau_R \meas{ \{ r \in (-s_1, s_2) : \|v'(r) - u'\| \geq R \} } - \tau_R (s_1 + s_2) / 4 \\
	& \geq \int_{-s_1}^{s_2} L(r, u, u') \, dr+ \tau_R ( s_1 + s_2)/2 - \tau_R (s_1 + s_2)/4 \\
	& = \int_{-s_1}^{s_2} L(r, u ,u')\, dr  + \tau_R (s_1 + s_2)/4,
	\end{align*}
	which is a contradiction.
\end{proof}
It is a rather delicate matter to investigate the behaviour of derivatives of individual coordinate functions once we admit infinite derivatives.  It seems that the presence of arbitrarily steep tangents in one variable can mask a multitude of sins in the others.  The following example demonstrates one such case: a vertical tangent in the second variable allows a cusp point in the first variable.
\begin{example}
	Let $v \in W^{1,1}((-1,1), \mathbb{R}^2)$ be defined by $v(t) = (|t|, \mathrm{sign}(t)|t|^{1/3})$, and let $\tilde{L} \colon [-1,1] \times \mathbb{R}^2 \times \mathbb{R}^2 \to [0, \infty)$ be given by 
	\[
	\tilde{L}(t, y, p) = ((y_1 - v_1(t))^2 + (y_2^3 - t)^2) p_2^6.
	\]
	
	Let $\eta > 0$ be the constant from the usual Mani\`a estimates (see for example~\cite{Buttazzo-Giaquinta-Hildebrandt}), i.e.\ such that 
	\[
	\int_0^1 (u(t)^3 - t)^2 (u'(t))^6 \, dt \geq \eta,
	\]
	for any $u \in W^{1,1}(0,1)$ such that $u(1) = 1$ and $u(t) < t^{1/3}/4$ for some $t > 0$.
	
	Suppose $u \in \scra{v(-1)}{v(1)}$.    If $u_2(t) < t^{1/3}/4$ for some $t \in (0,1)$, then $\int_{-1}^1 \tilde{L}(s, u, u')\, ds \geq \eta$; similarly if $u_2(t) > -t^{1/3}/ 4$ for some $t \in (-1, 0)$, then $\int_{-1}^1 \tilde{L}(s, u, u')\, ds \geq \eta$.
	
	Suppose now that $u_2(t) \geq t^{1/3}/4$ on $(0, 1]$ and $u_2(t) \leq -t^{1/3}/4$ on $[-1, 0)$.  Suppose further that $u_1(0) \neq v_1(0) = 0$.  So there exists some $t \in (0,1)$ such that $|u_1(s) - v_1(s)|> t$ on $(-t, t)$.  Then by Jensen's inequality,
	\begin{align*}
	\int_{-1}^1 \tilde{L}(s, u, u') \, ds \geq \int_{-t}^t (u_1(s) - v_1(s))^2 (u_2'(s))^6 \, ds
	& \geq t^2 \int_{-t}^t (u_2'(s))^6 \, ds \\
	& \geq 2t^3 \left( \frac{ u_2(t) - u_2(-t)}{2t}\right)^{\! 6} \\
	& \geq 2t^3 \left( \frac{ t^{1/3} + t^{1/3}}{8t}\right)^{\! 6}\\
	& \geq 2^{-11}.
	\end{align*}
	So if $u$ is such that $\int_{-1}^1 \tilde{L}(s, u , u') \, ds < \eta$, then $u_1(0) \neq v_1(0)$ implies that $\int_{-1}^1 \tilde{L}(s, u, u')\, ds \geq 2^{-11}$.  So if $u$ is such that $u_1(0) \neq v_1(0)$, then 
	\[
	\int_{-1}^1 \tilde{L}(s, u, u') \, ds\geq \min \{ \eta, 2^{-11}\}.
	\]
	Choose $\sigma \in (1, 3/2)$ and $\epsilon < \min\{\eta, 2^{-11}\}\left(\int_{-1}^1 ( |v_1'|^2 + |v_2'|^{\sigma}\right)^{-1}$, and define $L \colon [-1,1] \times \mathbb{R}^2 \times \mathbb{R}^2 \to [0, \infty)$ by 
	\[
	L(t, y, p) = ( ( y_1 - v_1(t))^2 + ( y_2^3 - t)^2) p_2^6 + \epsilon ( p_1^2 + p_2^{\sigma}).
	\]
	Let $u \in \scra{v(-1)}{v(1)}$ be a minimizer of $\mathscr{L}$, so by the above argument and the choice of $\epsilon$, $\mathscr{L}(u) \leq \mathscr{L}(v) < \min \{ \eta, 2^{-11} \}$, so $u_1(0) = v_1(0)$.  Suppose $I$ is a non-trivial component of $\{s \in (-1, 1) : u_1(s) \neq v_1(s)\}$, then $0 \notin I$, and so $v_1$ is linear on $I$, and therefore is the unique minimizer of the functional $v_1 \mapsto \int_I (v_1')^2$.  Define $\hat{u} \in \scra{v(-1)}{v(1)}$ by 
	\[
	\hat{u}(r) \df 
	\begin{cases} 
	u(r) & r \notin I,\\
	(v_1(r), u_2(r)) & r \in I.
	\end{cases}
	\]
	Then 
	\begin{align*}
	\int_I L(r, u ,u') \, dr
	& = \int_I ((u_1 - v_1)^2 + (u_2^3 - t)^2) (u_2')^6 + \epsilon \left( (u_1')^2 + (u_2')^{\sigma}\right) \, dr\\
	& >  \int_I ( u_2^3 - t)^2 (u_2')^6 + \epsilon \left( (v_1')^2 + (u_2')^{\sigma}\right) \, dr\\
	& = \int_I L(r, \hat{u}, \hat{u}')\, dr,
	\end{align*}
	which is a contradiction.  Hence $u_1 = v_1$, and in particular $D^{\pm}u_1(0) = \pm 1$.
\end{example}

\begin{acknowledgements}
I wish to thank John Ball for first posing and discussing with me the original question of approximation (question~\ref{Qu:JB}  in section~\ref{sec:variations}), which proved surprisingly diverting and served as the starting point for the current paper. I am also grateful to David Preiss for  encouragement, advice, and conversation on these topics.  Finally, I am indebted to an anonymous referee for comments on an earlier version of this article, in particular for suggesting and motivating significant improvements to the presentation.
\end{acknowledgements}
\def\cprime{$'$}

\end{document}